\documentclass[twoside,a4paper,reqno,11pt]{amsart} 
\usepackage[top=30mm,right=30mm,bottom=30mm,left=30mm]{geometry}

\usepackage{amsfonts, amsmath, amssymb, mathrsfs, bm, latexsym, stmaryrd, array, hyperref, mathtools}

\numberwithin{equation}{section}

\renewcommand{\a}{\alpha}
\renewcommand{\b}{\beta}
\newcommand{\g}{\gamma}
\newcommand{\e}{\varepsilon}

\renewcommand{\O}{\Omega}

\newcommand{\F}{\mathbb{F}_{q}}
\newcommand{\Nat}{\mathbb{N}}

\newcommand{\M}{\mathcal{M}}

\newcommand{\<}{\langle}
\renewcommand{\>}{\rangle}
\newcommand{\la}{\langle}
\newcommand{\ra}{\rangle}

\renewcommand{\to}{\rightarrow}

\newcommand{\leqs}{\leqslant}
\newcommand{\geqs}{\geqslant}

\newcommand{\fpr}{\mbox{{\rm fpr}}}

\newcommand{\what}{\widehat} 

\makeatletter
\newcommand{\imod}[1]{\allowbreak\mkern4mu({\operator@font mod}\,\,#1)}
\makeatother

\newcommand{\vs}{\vspace{3mm}}

\newtheorem{theorem}{Theorem}

\newtheorem{thm}{Theorem}[section] 
\newtheorem{lem}[thm]{Lemma}
\newtheorem{prop}[thm]{Proposition}
 
\newtheorem{cor}[thm]{Corollary}

\theoremstyle{definition}
\newtheorem{remk}{Remark}
\newtheorem{rem}[thm]{Remark}

\newtheorem*{def-non}{Definition}

\begin{document}

\author{Timothy C. Burness}
\address{T.C. Burness, School of Mathematics, University of Bristol, Bristol BS8 1TW, UK}
\email{t.burness@bristol.ac.uk}
 
\author{Scott Harper}
\thanks{The second author thanks the Engineering and Physical Sciences Research Council and the Heilbronn Institute for Mathematical Research for their financial support. Both authors thank an anonymous referee for helpful comments on a previous version of the paper.}
\address{S. Harper, School of Mathematics, University of Bristol, Bristol BS8 1TW, UK}
\email{scott.harper@bristol.ac.uk}
 
\title[On the uniform domination number of a finite simple group]{On the uniform domination number \\ of a finite simple group}

\subjclass[2010]{Primary 20E32, 20F05; Secondary 20E28, 20P05}
\date{\today}

\begin{abstract}
Let $G$ be a finite simple group. By a theorem of Guralnick and Kantor, $G$ contains a conjugacy class $C$ such that for each non-identity element $x \in G$, there exists $y \in C$ with $G = \la x,y\ra$. Building on this deep result, we introduce a new invariant $\gamma_u(G)$, which we call the uniform domination number of $G$. This is the minimal size of a subset $S$ of conjugate elements such that for each $1 \ne x \in G$, there exists $s \in S$ with $G = \la x, s\ra$. (This invariant is closely related to the total domination number of the generating graph of $G$, which explains our choice of terminology.) By the result of Guralnick and Kantor, we have $\gamma_u(G) \leqs |C|$ for some conjugacy class $C$ of $G$, and the aim of this paper is to determine close to best possible bounds on $\gamma_u(G)$ for each family of simple groups. For example, we will prove that there are infinitely many non-abelian simple groups $G$ with $\gamma_u(G) = 2$. To do this, we develop a probabilistic approach, based on fixed point ratio estimates. We also establish a connection to the theory of bases for permutation groups, which allows us to apply recent results on base sizes for primitive actions of simple groups.
\end{abstract}

\maketitle

\setcounter{tocdepth}{1}
\tableofcontents


\section{Introduction}\label{s:intro}
The study of generators for simple groups has a long and rich history, with numerous applications. As a consequence of the Classification of Finite Simple Groups, it is known that every finite simple group can be generated by two elements; this is a theorem of Steinberg \cite{St} for groups of Lie type, and the argument was completed by Aschbacher and Guralnick in \cite{AG}.
This result leads to many interesting problems that have been the focus of intensive research in recent years. For instance, it is natural to consider the abundance of generating pairs in a simple group, and also the existence of generators with prescribed properties, such as restrictions on the orders of the generating elements. 

Through the work of many authors, we now understand that finite simple groups have some remarkable generation properties. For example, a theorem of Liebeck and Shalev \cite{LSh}, extending earlier work of Dixon \cite{Dix} and Kantor and Lubotzky \cite{KLub}, shows that a randomly chosen pair of elements in a finite simple group $G$ forms a generating set with probability tending to $1$ as $|G|$ tends to infinity. In \cite{GK} (also see \cite{Stein}), Guralnick and Kantor use probabilistic methods to prove that every non-identity element of a finite simple group $G$ belongs to a generating pair (a group with this strong $2$-generation property is said to be \emph{$\frac{3}{2}$-generated}). See \cite{BGK, BG, GSh, Harper} for further results in this direction for simple and almost simple groups. We refer the reader to \cite{B_sur} for a recent survey of related topics concerning the generation of simple groups.

Let $G$ be a finite group and let $G^{\#}$ be the set of non-identity elements of $G$. The \emph{generating graph} of $G$, denoted by $\Gamma(G)$, has vertex set $G^{\#}$ and two vertices are adjacent if and only if they generate $G$. This graph encodes many interesting generation properties of a $2$-generated group. For example, $G$ is $\frac{3}{2}$-generated if and only if $\Gamma(G)$ has no isolated vertices. In turn, many natural invariants of this graph have interesting group-theoretic interpretations, and this provides an appealing interplay between group theory and graph theory. For instance, it is natural to consider the connectedness, diameter and Hamiltonicity of $\Gamma(G)$, as well as its clique, co-clique and chromatic numbers. In recent years, numerous authors have focussed on these problems in the context of a non-abelian finite simple group $G$. Here one of the most striking results is \cite[Theorem 1.2]{BGK}, which implies that $\Gamma(G)$ is connected with diameter $2$. In \cite{BGLMN}, it is conjectured that $\Gamma(G)$ always contains a Hamiltonian cycle, but so far this has only been established for all sufficiently large simple groups (see \cite[Theorem 1.2]{BGLMN}). The proof of this result uses a combination of probabilistic and combinatorial techniques.

In this paper we initiate the study of another natural invariant of the generating graph of a finite group. Let $\Gamma$ be a finite graph with no isolated vertices. A subset $S$ of $\Gamma$ is a \emph{total dominating set} if every vertex of $\Gamma$ is adjacent to a vertex in $S$, and the \emph{total domination number} of $\Gamma$ is the minimal size of a total dominating set. This is a well-studied invariant, which, in general, is rather difficult to compute precisely. Indeed, the problem of determining whether the total domination number of a given graph is at most a given number $k$ is \textsf{NP}-complete (see the survey \cite{Hen} for more details). 

As noted above, if a finite group $G$ is $\frac{3}{2}$-generated then its generating graph $\Gamma(G)$ has no isolated vertices. In this situation, we define the \emph{total domination number} $\g_t(G)$ of $G$ to be the total domination number of $\Gamma(G)$. 

In this paper, we will work with a slightly stronger notion. Let $k$ be a positive integer. Following \cite{BGK}, we say that $G$ has \emph{uniform spread} $k$ if there exists a fixed conjugacy class $C$ of $G$ with the property that for any $k$ elements $x_1,\ldots,x_k \in G^{\#}$ there exists $g \in C$ such that  $G = \<x_i,g\>$ for all $i$. Therefore, $G$ has uniform spread $1$ if and only if some conjugacy class of $G$ is a total dominating set for $\Gamma(G)$. By the main theorem of \cite{GK}, every finite simple group $G$ has uniform spread $1$ (in fact, \cite[Theorem 1.2]{BGK} shows that all finite simple groups have uniform spread $2$). Therefore, for finite groups with uniform spread $1$, such as simple groups, it is natural to seek small total dominating sets of conjugate elements. This leads us naturally to the following definition.

\begin{def-non}\label{d:udn}
Let $G$ be a finite group with uniform spread $1$ and generating graph $\Gamma(G)$. We define the \emph{uniform domination number} $\gamma_u(G)$ of $G$ to be the minimal number of conjugate elements that form a total dominating set for $\Gamma(G)$. 
\end{def-non}

Observe that $\gamma_t(G) \leqs \gamma_u(G)$. Also note that $\g_u(G) = 1$ if and only if $G$ is cyclic.

We are now in a position to state our main results on the uniform domination number of simple groups. By the above observations, if $G$ is a non-abelian finite simple group then  
\begin{equation}\label{e:tr}
2 \leqs \gamma_u(G) \leqs |C|
\end{equation}
for some conjugacy class $C$ of $G$. (Typically, $C$ is large, such as a class of regular semisimple elements if $G$ is a group of Lie type.) Our first result shows that there are infinitely many groups for which the trivial lower bound in \eqref{e:tr} is sharp (see Theorems \ref{t:main_alt} and \ref{t:ExceptionalUDN}(i)).

\begin{theorem}\label{t:main1}
There are infinitely many non-abelian finite simple groups $G$ with $\gamma_u(G) = 2$. For example, $\gamma_u(A_n) = 2$ for every prime number $n \geqs 13$.
\end{theorem}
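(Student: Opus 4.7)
Since $A_n$ is non-abelian simple, the lower bound $\gamma_u(A_n) \geqs 2$ is immediate from the remarks preceding the theorem; the substance lies in the upper bound $\gamma_u(A_n) \leqs 2$ for $n \geqs 13$ prime. The plan is to fix an $n$-cycle $s \in A_n$ (so $|s^{A_n}| = (n-1)!/2$) and to exhibit $g_1, g_2 \in A_n$ such that $S = \{s^{g_1}, s^{g_2}\}$ is a total dominating set of $\Gamma(A_n)$. For this I would invoke the standard probabilistic framework: for a uniformly random pair $(g_1,g_2) \in A_n \times A_n$, the probability that $S$ fails to dominate a fixed non-identity $x$ is at most $P(x,s)^2$, where $P(x,s) = \Pr_g[\la x, s^g \ra \neq A_n]$. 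It therefore suffices to prove that $\sum_{x \in A_n^{\#}} P(x,s)^2 < 1$.

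The principal input is a description of the maximal subgroups $M$ of $A_n$ that contain an $n$-cycle, since in the bad event such an $M$ contains both $x$ and a conjugate of $s$. By the classification of primitive permutation groups of prime degree (Burnside, combined with the CFSG-based classification of $2$-transitive groups), for $n \geqs 13$ prime the list of maximal subgroups of $A_n$ containing an $n$-cycle consists of: (i) $H_0 := \mathrm{AGL}_1(n) \cap A_n$, a Frobenius group of order $n(n-1)/2$ occurring for every prime $n$; (ii) projective-type subgroups arising when $n = (q^d-1)/(q-1)$ for some prime power $q$ and integer $d \geqs 2$ (for instance $\mathrm{PSL}_3(3) < A_{13}$, $\mathrm{PSL}_2(16) < A_{17}$, and $\mathrm{PSL}_5(2) < A_{31}$); and (iii) the sporadic inclusion $M_{23} < A_{23}$. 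Applying the union bound $P(x,s) \leqs \sum_{M \in \M(x)} \fpr(s, A_n/M)$ over maximal overgroups of $x$, the task reduces to bounding the corresponding sums of fixed-point ratios.

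A feature I would exploit heavily is that in $H_0$ the Sylow $n$-subgroup is normal, so every $n$-cycle of $A_n$ lies in a \emph{unique} conjugate of $H_0$, namely $N_{A_n}(\la s \ra)$. In the generic case---when all maximal overgroups of any conjugate of $s$ are themselves conjugate to $H_0$---the existence of a good pair $(g_1,g_2)$ reduces to the base-size condition $b(A_n, A_n/H_0) \leqs 2$: if $H_0^{g_1} \cap H_0^{g_2} = 1$, then taking $s_i$ to be any $n$-cycle of $H_0^{g_i}$ produces a valid $S$. This base-size bound follows in turn from the probabilistic estimate $\sum_{x \in H_0^{\#}} \fpr(x, A_n/H_0) < 1$, which holds comfortably for $n \geqs 13$ because the index $[A_n : H_0] = (n-2)!$ dwarfs the $A_n$-class sizes of elements of $H_0$.

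The main obstacle is handling the exceptional primes $n$ where additional maximal overgroups of an $n$-cycle appear---explicitly $n = 13, 17, 23, 31$, along with any other prime $n$ of the form $(q^d-1)/(q-1)$. For these the clean reduction to a single base-size problem breaks down, and I would instead return to the full probabilistic inequality, adding a term for each class of exceptional overgroup $M$ and estimating $\fpr(s, A_n/M)$ from the explicit structure and conjugacy classes of $M$. Because the indices $[A_n : M]$ for the exceptional maximals are orders of magnitude larger than $[A_n : H_0] = (n-2)!$, the contribution of the extra terms to $\sum_x P(x,s)^2$ is negligible, so the inequality $\sum_x P(x,s)^2 < 1$ persists. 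Combining both cases establishes $\gamma_u(A_n) = 2$ for every prime $n \geqs 13$.
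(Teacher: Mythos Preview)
Your framework matches the paper's: take $s$ an $n$-cycle, use the probabilistic method to bound $Q(A_n,s,2)$, classify the maximal overgroups of $s$ via the classification of primitive groups of prime degree, and in the generic case (only $H_0 = \mathrm{AGL}_1(n)\cap A_n$ appears) reduce to $b(A_n,A_n/H_0)=2$. Your derivation of the base-size condition from $\sum_{x\in H_0^{\#}}\fpr(x,A_n/H_0)<1$ is correct; the paper simply cites the known result $b(A_n,A_n/H_0)=2$ here.

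The gap is in your treatment of the exceptional primes, and it is twofold. First, a factual slip: the exceptional maximals $M = \mathrm{P\Gamma L}_d(q)\cap A_n$ are much \emph{larger} than $H_0$, so $[A_n:M]$ is much \emph{smaller} than $[A_n:H_0]$ (for $n=31$ one has $|H_0|=465$ versus $|\mathrm{PSL}_5(2)|\approx 10^7$). So your stated reason for negligibility is backwards. Second, and more seriously, there are infinitely many primes in $\mathcal{H}=\{(q^d-1)/(q-1)\}$, and for such $n$ an $n$-cycle lies in as many as $\sim n\log_2 n$ maximal subgroups (counting all conjugates of all $\mathrm{P\Gamma L}_d(q)\cap A_n$), not just the four you name. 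Proving $\widehat{Q}(A_n,s,2)<1$ uniformly over these primes requires a quantitative input you do not supply: one needs that every prime-order $x$ lying in some $H\in\M(A_n,s)$ satisfies $|x^{A_n}|>C^4$, where $C=\max_{H}|H|$. The paper extracts this from the Guralnick--Magaard minimal-degree theorem (any such $x$ moves at least $n/2$ points) combined with Mar\'oti's bound $|H|<n^{1+\log_2 n}$, and then feeds it through the inequality $\widehat{Q}(A_n,s,2)\leqs B^{-1}(\sum_j A_j)^2$ with $A_j=C$, $B=C^4$, giving $\widehat{Q}<(\ell/C)^2<1$. Without something playing the role of that class-size lower bound, the ``extra terms are negligible'' step does not go through. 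For the small exceptional primes $n\in\{13,17,23,31\}$ that fall below the range where the uniform estimate bites, the paper falls back on direct computation; you would need to do the same.
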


Next we present results for alternating, sporadic and groups of Lie type, in turn. Our main result for alternating groups is the following (see Theorem \ref{t:mainn} for a more detailed statement).

\begin{theorem}\label{t:main2}
There exists an absolute constant $c$ such that 
\[
\gamma_u(A_n) \leqs c (\log_2 n)
\]
for all $n \geqs 5$. In particular, if $n \geqs 6$ is even, then
\[
\lceil \log_2 n \rceil - 1 \leqs \gamma_u(A_n) \leqs 2\lceil \log_2 n \rceil.
\]
\end{theorem}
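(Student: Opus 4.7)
The plan is to exploit a correspondence between total dominating sets of conjugate elements in $A_n$ and bases for associated permutation actions of $A_n$; this correspondence will deliver both the upper and the lower bounds.

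For the upper bound $\gamma_u(A_n) \leqs 2\lceil\log_2 n\rceil$ when $n$ is even, I would first establish a general reduction lemma of the following shape: if $y \in G$, with $G$ a non-abelian finite simple group, has the property that (i) every maximal overgroup of $y$ in $G$ is conjugate to a fixed subgroup $H$, and (ii) every conjugate of $y$ lying in $H$ generates $\langle y\rangle$, then $\gamma_u(G) \leqs b(G,H)$, where $b(G,H)$ denotes the base size of $G$ acting on $G/H$ by left multiplication. The proof is direct: given a base $\{g_1 H,\ldots,g_b H\}$ satisfying $\bigcap_i g_i^{-1} H g_i = 1$, the set $S = \{y^{g_1},\ldots,y^{g_b}\}$ is total dominating, because if some $x\ne 1$ evaded domination then each pair $\{x,y^{g_i}\}$ would lie in a maximal $M_i$, forced by the hypothesis to equal $H^{g_i}$, yielding $x \in \bigcap_i H^{g_i} = 1$. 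I would then apply this to $A_n$ with $y$ the product of two cycles of coprime lengths $k$ and $n-k$, $k$ close to $n/2$; Jordan-type results on transitive extensions confine the maximal overgroups of $y$ in $A_n$ (for generic $n$) to the single conjugacy class of set stabilisers $H = (S_k\times S_{n-k}) \cap A_n$. Standard base-size estimates for $A_n$ acting on $k$-subsets, obtained via a separating-family construction, then give $b(A_n, H) \leqs 2\lceil\log_2 n\rceil$ after optimising $k$.

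For the lower bound $\gamma_u(A_n) \geqs \lceil\log_2 n\rceil - 1$ when $n \geqs 6$ is even, I would combine two ingredients. First, any total dominating set $S = \{s_1,\ldots,s_k\}$ of conjugate elements in $A_n$ is a base for the conjugation action of $A_n$ on the conjugacy class containing $S$: if some $1\ne x$ commuted with every $s_i$, each $\langle x,s_i\rangle$ would be abelian and hence a proper subgroup of $A_n$, contradicting the dominating property. Second, for every proper non-trivial subset $T \subset \{1,\ldots,n\}$, some $s_i$ must fail to stabilise $T$ setwise; otherwise any $x$ preserving $T$ would be undominated, each $\langle x,s_i\rangle$ being trapped in the proper maximal subgroup $(S_{|T|}\times S_{n-|T|})\cap A_n$. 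These separating-type conditions, applied to the orbit partitions of $\langle s_1\rangle,\ldots,\langle s_k\rangle$ (and analogous partitions on structures such as $2$-block systems and $2$-subsets), combine with a classical separating-family counting argument on $\{1,\ldots,n\}$ to force $k \geqs \lceil\log_2 n\rceil - 1$.

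The principal obstacle is controlling the maximal overgroups of the chosen $y$ in $A_n$ uniformly in $n$: while the coprime-cycles construction works generically, the small-$n$ exceptions where $A_n$ contains Mathieu groups $M_{11},M_{12},M_{23},M_{24}$, together with values of $n$ of special arithmetic shape giving rise to affine or product-action primitive maximal subgroups, will require ad hoc treatment or a refined reduction lemma permitting several conjugacy classes of overgroups. A secondary difficulty is extracting the sharp constant $2$ in the base-size estimate $b(A_n,(S_k\times S_{n-k})\cap A_n) \leqs 2\lceil\log_2 n\rceil$, which calls for an explicit probabilistic or greedy construction of a base of the claimed size.
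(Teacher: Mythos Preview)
Your upper bound for even $n$ is essentially the paper's approach: choose $y$ with cycle shape $[k,n-k]$, $k$ near $n/2$ and $(k,n-k)=1$, show that the unique maximal overgroup is the subset stabiliser $H=(S_k\times S_{n-k})\cap A_n$, and invoke Halasi's base-size bound for the $k$-set action. Two remarks. First, your reduction lemma needs the stronger hypothesis $\mathcal{M}(G,y)=\{H\}$, not merely that all maximal overgroups lie in a single conjugacy class; your proof sketch uses $M_i=H^{g_i}$, which fails if $y$ has several conjugate overgroups (and hypothesis (ii) does not repair this). In the application uniqueness does hold, so this is harmless. Second, your worries about Mathieu and affine overgroups are misplaced for even $n$: coprimality of $k,n-k$ excludes imprimitive overgroups, and Marggraf's theorem (the $k$-cycle $y^{n-k}$ has $k<n/2$) excludes all primitive ones, uniformly.

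Your lower bound argument has a genuine gap. Ingredient~2 says only that no proper nonempty $T$ is stabilised by \emph{all} $s_i$, equivalently that $\langle s_1,\dots,s_k\rangle$ is transitive; this yields no useful lower bound on $k$. The correct argument reverses the roles of elements and subsets: since $n$ is even, each $s_i$ is not an $n$-cycle and so lies in some subset stabiliser $H_i=(S_\ell\times S_{n-\ell})\cap A_n$. The TDS condition forces $\bigcap_i H_i=1$ (otherwise any $1\ne x$ in the intersection is undominated), so the $H_i$ give a base for $A_n$ on $\ell$-sets, whence $k\geqs b(A_n,\Omega_\ell)\geqs \lceil\log_2 n\rceil-1$ by Halasi's lower bound. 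This \emph{is} the separating-family argument, but applied to the chosen subsets $T_i$ rather than to all $T$ simultaneously.

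Finally, you have not addressed the first assertion of the theorem for odd $n$. Your two-cycle element $y$ of shape $[k,n-k]$ with $k+(n-k)=n$ odd is an odd permutation, so $y\notin A_n$; the coprime-cycles construction simply does not produce elements of $A_n$ when $n$ is odd. The paper handles odd composite $n$ with a three-cycle element (three roughly equal cycle lengths), which has three intransitive maximal overgroups, and then bounds $\gamma_u$ via a probabilistic fixed-point-ratio estimate rather than a direct base-size reduction; this is where the constant $c=77$ comes from.
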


\begin{remk}
Notice that if $n$ is even then Theorem \ref{t:main2} gives the exact value of $\gamma_u(A_n)$, up to a small constant. It is also worth noting that the uniform domination number of an alternating group can be arbitrarily large. The analysis of odd degree alternating groups is more difficult and our best estimate is $\gamma_u(A_n) \leqs 77\log_2n$ (see Proposition \ref{t:altodd}).
\end{remk}

We can compute precise results for sporadic simple groups; a simplified version of our main result (Theorem \ref{t:SporadicUDN}) is as follows.

\begin{theorem}\label{t:main3}
Let $G$ be a sporadic simple group. Then $\gamma_u(G) \leqs 4$, with equality if $G = {\rm M}_{11}$ or ${\rm M}_{12}$. 
\end{theorem}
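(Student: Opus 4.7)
The plan is to handle the upper bound $\gamma_u(G) \leqs 4$ for all 26 sporadic simple groups and the matching lower bound for ${\rm M}_{11}$ and ${\rm M}_{12}$ separately, in both cases exploiting the detailed information in the \textsc{Atlas}. For the upper bound I would apply the standard probabilistic method based on fixed point ratios: for a conjugacy class $C$ of $G$ and a non-identity element $x \in G$, let $P(x,C)$ denote the probability that a uniformly random $s \in C$ satisfies $\la x,s\ra \ne G$. The union bound over maximal overgroups of $x$ gives
\[
P(x,C) \leqs \sum_{M} \fpr(s, G/M),
\]
where $M$ ranges over the maximal subgroups of $G$ containing $x$ and $\fpr(s, G/M) = |s^G \cap M|/|s^G|$. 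The probability that a uniformly random $4$-tuple from $C$ fails to be a total dominating set is at most $\sum_{[x]} |x^G|\, P(x,C)^4$, summed over representatives of the non-trivial conjugacy classes of $G$; if this is strictly less than $1$ for some class $C$, then such a 4-tuple exists and $\gamma_u(G) \leqs 4$. All the fixed point ratios $|s^G \cap M|/|s^G|$ are immediately computable from the character tables and class-fusion information in the \textsc{Atlas}, so this reduces to a finite verification for each sporadic group; a good candidate $C$ is typically a class of elements of large order lying in few maximal subgroups.

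For the lower bound $\gamma_u(G) \geqs 4$ with $G \in \{{\rm M}_{11},{\rm M}_{12}\}$, the plan is to show that no triple of conjugate elements of $G$ is a total dominating set for $\Gamma(G)$. Since $|{\rm M}_{11}| = 7920$ and $|{\rm M}_{12}| = 95040$ are small and each group has only a handful of conjugacy classes and maximal subgroups, this can be verified class by class: for each conjugacy class $C$ of $G$, I would identify a witness $x \in G^{\#}$ such that for every triple $\{s_1,s_2,s_3\} \subseteq C$ there is a maximal subgroup $M_i$ of $G$ containing both $x$ and $s_i$. A natural structural route is to find a single maximal subgroup $H$ of $G$ containing a very large proportion of $C$, together with an element $x \in H$ whose only proper overgroup in $G$ is $H$; then any triple of elements of $C$ which all lie in $H$ is automatically obstructed by $x$, and any triple not entirely contained in $H$ can be handled by a secondary choice of witness. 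Failing a clean structural argument, a direct computer enumeration in \textsc{Gap} or \textsc{Magma} is entirely feasible at these orders.

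The main obstacle I foresee lies precisely at the borderline cases ${\rm M}_{11}$ and ${\rm M}_{12}$, where the upper bound is tight and the naive estimate $\sum_{[x]} |x^G|\, P(x,C)^4 < 1$ may barely succeed or even fail for every choice of $C$, forcing one to work with sharper, more precise contributions from the classes of $x$ that lie in many maximal subgroups, and possibly to refine the union bound above by grouping overgroups cleverly. The matching lower bound of $4$ in these two cases must then be established directly by exhibiting bad triples, and will rest on the detailed maximal subgroup lattices of ${\rm M}_{11}$ and ${\rm M}_{12}$ together with their combinatorial description via the Steiner systems $S(4,5,11)$ and $S(5,6,12)$.
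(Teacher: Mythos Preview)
Your overall strategy matches the paper's, but there is a concrete gap at ${\rm M}_{12}$ and a cleaner route for the lower bounds that you are missing.

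For the upper bound, the probabilistic method via fixed point ratios is exactly what the paper uses, and for $24$ of the $26$ sporadic groups (including ${\rm M}_{11}$) it does give $\widehat{Q}(G,s,4)<1$ for a suitable class, or else $\mu(G)=1$ and a known base-size result does the job directly. However, for $G={\rm M}_{12}$ the method genuinely fails to reach $4$: the best the paper obtains is $\widehat{Q}(G,s,6)<1$ with $s$ in class \texttt{10A}, and no refinement of the union bound over maximal overgroups brings this down. The paper instead establishes $\gamma_u({\rm M}_{12})\leqs 4$ by a direct random search in \textsc{Magma} that exhibits an explicit $4$-element TDS inside the class \texttt{10A}. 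So your fallback of ``sharper, more precise contributions'' is not enough here; you need an actual construction.

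For the lower bounds at ${\rm M}_{11}$ and ${\rm M}_{12}$, the paper does not argue by exhibiting a witness $x$ that obstructs every triple as you propose. Instead it uses the base-size connection (Corollary~\ref{c:LowerBound}): if every element of $G$ lies in some maximal subgroup $H$ with $b(G,G/H)\geqs c$, then $\gamma_u(G)\geqs c$. For ${\rm M}_{11}$ this is immediate, since every element lies in a maximal subgroup isomorphic to ${\rm L}_2(11)$ or ${\rm M}_{10}$, and both actions have base size $4$. For ${\rm M}_{12}$ the base-size argument handles all classes except \texttt{3B} and \texttt{6A}; since elements of \texttt{6A} square into \texttt{3B}, only \texttt{6A} needs a direct exhaustive search to rule out $3$-element total dominating sets. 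This base-size reduction is both more systematic and far less computation than a class-by-class search for obstructing witnesses.
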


Finally, we present a version of our main result for simple groups of Lie type (see Theorems \ref{t:ExceptionalUDN} and \ref{t:ClassicalUDN} for more detailed results). In the statement, $r$ is the untwisted Lie rank of $G$ (that is, $r$ is the rank of the ambient simple algebraic group). 

\begin{theorem}\label{t:main4}
Let $G$ be a finite simple group of Lie type of rank $r$.
\begin{itemize}\addtolength{\itemsep}{0.2\baselineskip}
\item[{\rm (i)}] If $G = {\rm L}_{2}(q)$, then $\gamma_u(G) \leqs 4$, with equality if and only if $q=9$. 
\item[{\rm (ii)}] If $G$ is an exceptional group of Lie type, then $\gamma_u(G) \leqs 6$.
\item[{\rm (iii)}] If $G$ is a classical group, then 
$\gamma_u(G) \leqs 7r+56$.
\end{itemize}
\end{theorem}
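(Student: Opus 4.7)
The plan is to combine the probabilistic method with bounds on base sizes of primitive permutation actions of almost simple groups. The key observation is the following: for $s \in G$, let $\mathcal{U}(s)$ denote the union of all maximal subgroups of $G$ that contain $s$. A set $\{s_1,\ldots,s_c\} \subseteq s^G$ is a total dominating set of conjugates if and only if $\bigcap_{i=1}^c \mathcal{U}(s_i) = \{1\}$, since $\la x,s\ra < G$ is equivalent to $x \in \mathcal{U}(s)$. In the favourable case that $s$ lies in a unique maximal overgroup $H$, this translates to $\bigcap_i H^{g_i} = 1$, i.e.\ $\{g_1 H,\ldots,g_c H\}$ is a base for the action of $G$ on $G/H$; hence $\gamma_u(G) \leqs b(G,G/H)$, and recent base-size theorems for primitive actions of simple groups feed in directly. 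When $s$ has several maximal overgroups, we instead use the probabilistic estimate $\sum_{H \in \M(s)} |G:H| \cdot \fpr(s;G/H)^c < 1$, where $\M(s)$ is a set of representatives for the conjugacy classes of maximal overgroups of $s$; the necessary fixed point ratio bounds come from work of Lawther--Liebeck--Seitz (exceptional groups) and Burness (classical groups).

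For part (i), where $G = {\rm L}_{2}(q)$, one uses Dickson's explicit classification of the maximal subgroups of $G$. Taking $s$ to be a regular semisimple element of order dividing $(q\pm 1)/\gcd(2,q-1)$ makes $\M(s)$ very small, and a short case analysis --- together with {\sc Magma} for a handful of small $q$, in particular $q = 9$ where ${\rm L}_{2}(9) \cong A_6$ accounts for the exceptional equality --- yields $\gamma_u(G) \leqs 4$.

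For part (ii), on exceptional groups of Lie type, the rank is bounded, so there are only finitely many conjugacy classes of maximal subgroups to handle and uniform fixed point ratio estimates are available. Choosing $s$ to be a generator of a Coxeter-type maximal torus minimises $\M(s)$, and then either the base-size reduction or the probabilistic estimate applied with $c = 6$ yields $\gamma_u(G) \leqs 6$; a short list of small rank or small $q$ configurations is resolved by direct computation.

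The main difficulty is part (iii). Here the rank $r$ is unbounded, and both the number of conjugacy classes of maximal subgroups and the base sizes of the resulting actions depend on $r$. The plan is to choose $s$ so that $\M(s)$ consists only of subspace-type (Aschbacher $\mathcal{C}_1$) subgroups, of which there are $O(r)$ conjugacy classes. One then invokes the sharp base-size bounds for subspace actions of classical groups (as in Burness and Guralnick--Saxl, with subsequent refinements) to bound each contribution; summing over the subspace classes yields the linear bound $7r + 56$. The hardest step will be arranging this choice of $s$ uniformly across the four classical families (linear, unitary, symplectic, and orthogonal of each type) so that no maximal overgroups outside $\mathcal{C}_1$ arise, and then handling the residual small-rank and small-characteristic cases where the generic argument breaks down.
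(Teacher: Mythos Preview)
Your outline for parts (i) and (ii) is essentially what the paper does: pick $s$ in a suitable maximal torus so that $\mathcal{M}(G,s)$ is tiny (usually a singleton, by Weigel's work for exceptional groups), then appeal to known base-size bounds or, in the handful of cases where $|\mathcal{M}(G,s)|>1$, to fixed point ratio estimates. One caution: the probabilistic inequality you wrote, $\sum_{H \in \mathcal{M}(s)} |G:H|\cdot\fpr(s;G/H)^c<1$, is not the right shape. The key lemma is
\[
Q(G,s,c) \leqs \sum_{i} |x_i^G|\Bigl(\sum_{H \in \mathcal{M}(G,s)}\fpr(x_i,G/H)\Bigr)^{c},
\]
where the $x_i$ run over prime-order classes; it is $\fpr(x,G/H)$, not $\fpr(s,G/H)$, that enters.

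For part (iii) your plan has a genuine gap. You propose choosing $s$ so that $\mathcal{M}(G,s)$ consists of $O(r)$ subspace stabilisers and then ``summing base sizes'' over these. But there is no mechanism for combining base sizes across several maximal overgroups: when $\mathcal{M}(G,s)=\{H_1,\ldots,H_m\}$, a TDS of conjugates $\{s^{g_1},\ldots,s^{g_c}\}$ requires $\bigcap_i H_{f(i)}^{g_i}=1$ for \emph{every} choice function $f$, and knowing each $b(G,G/H_j)$ separately does not control this. Nor does the probabilistic method survive $|\mathcal{M}(G,s)|$ growing with $r$: the inner sum $\sum_H\fpr(x,G/H)$ is then $O(r)\cdot q^{-1}$ for $x$ a long root element, and raising this to any power cannot beat $|G|$. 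The paper's route is the opposite: following Guralnick--Kantor, choose $s$ so that $|\mathcal{M}(G,s)|$ is \emph{bounded} (at most $3$ generically), with the overgroups being a specific mix of a subspace stabiliser and a few field-extension subgroups. One then proves a uniform bound $\sum_{H\in\mathcal{M}(G,s)}\fpr(x,G/H)<c\,q^{-(r-3)/2}$ for every prime-order $x$, and the crude estimate $\widehat Q(G,s,7r+56)<|G|\cdot\bigl(c\,q^{-(r-3)/2}\bigr)^{7r+56}<1$ finishes it. The linear exponent $7r+56$ arises from balancing $|G|\sim q^{O(r^2)}$ against $q^{-(r/2)\cdot c}$, not from adding up $O(r)$ contributions.
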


\begin{remk}
Let us make some comments on the statement of Theorem~\ref{t:main4}.
\begin{itemize}\addtolength{\itemsep}{0.2\baselineskip}
\item[{\rm (a)}] The upper bound in part (ii) can be improved for some families of exceptional groups. For instance, by Theorem~\ref{t:ExceptionalUDN}, $\gamma_u(G) = 2$ if $G \in \left\{ {}^2B_2(q), {}^2G_2(q), E_8(q) \right\}$. 
\item[{\rm (b)}] We refer the reader to Theorem \ref{t:ClassicalUDN} for a more detailed version of part (iii), which provides stronger bounds in some special cases. For example, if $G$ is a symplectic group in even characteristic, or an odd dimensional orthogonal group, then 
\[
r \leqs \gamma_u(G) \leqs 7r
\]
and thus the linear bound in (iii) is essentially best possible (up to constants). In other cases, we can establish a constant bound. For instance, if $G = {\rm U}_{r+1}(q)$ and $r \geqs 7$ is odd, then $\gamma_u(G) \leqs 15$.
\end{itemize}
\end{remk}

Let us briefly describe some of the main ideas in the proofs of Theorems \ref{t:main1}--\ref{t:main4}. Following Guralnick and Kantor \cite{GK} in their work on the uniform spread of simple groups, we develop a probabilistic approach to study the uniform domination number. Let $G$ be a finite group and fix an element $s \in G^{\#}$. Write $\M(G,s)$ for the set of maximal subgroups of $G$ containing $s$. For an element $x \in G$ and subgroup $H < G$, let $\fpr(x,G/H)$ be the \emph{fixed point ratio} of $x$ for the action of $G$ on the set of cosets $G/H$. For a positive integer $c$, let $Q(G,s,c)$ be the probability that a randomly chosen $c$-tuple of conjugates of $s$ is \emph{not} a total dominating set for $\Gamma(G)$. Clearly, if $Q(G,s,c)<1$ for some $s \in G^{\#}$ then $\gamma_u(G) \leqs c$, so we are interested in bounding $Q(G,s,c)$ from above.

Here the key tool is Lemma~\ref{l:ProbMethod}, which states that 
\[
Q(G,s,c) \leqs \sum_{i=1}^{k} |x_i^G| \left( \sum_{H \in \M(G,s)} \fpr(x_i,G/H) \right)^c =: \what{Q}(G,s,c),
\]
where $\{x_1, \ldots, x_k\}$ is a set of representatives of the conjugacy classes in $G$ of prime order elements. To apply this result, the first step is to identify an element $s \in G^{\#}$ that is contained in very few maximal subgroups of $G$. We then need to determine the specific subgroups in $\M(G,s)$ and compute upper bounds for the relevant fixed point ratios. Here we can appeal to the extensive literature on fixed point ratios for simple groups. For example, if $G$ is a simple group of Lie type over $\F$ and $H$ is a maximal subgroup of $G$, then a well known theorem of Liebeck and Saxl \cite[Theorem 1]{LSax} implies that $\fpr(x,G/H) \leqs 4/3q$ for all $x \in G^{\#}$, with a short list of known exceptions. Stronger bounds are established in \cite{Bur} (for non-subspace actions of classical groups), \cite[Section 3]{GK} (subspace actions) and \cite{LLS} (exceptional groups).  

In the special case where there is an element $s \in G^{\#}$ with $\M(G,s) = \{H\}$ and $H$ is core-free, it is easy to see that $\gamma_u(G) \leqs b$, where $b = b(G,G/H)$ is the \emph{base size} of $G$ with respect to the action on $G/H$ (that is, $b$ is the minimal size of a subset of $G/H$ with trivial pointwise stabiliser). This observation provides an important connection between the uniform domination number of $G$ and the base sizes of primitive permutation representations of $G$. Bases for primitive groups have been a topic of interest in group theory since the nineteenth century, with a  wide range of applications. In particular, strong upper bounds on the base sizes of primitive almost simple groups have recently been established (see \cite{B07,BGS,BLS,BOW,Hal} for example), and in many cases we can apply these results to bound the uniform domination number. For example, Halasi's results \cite{Hal} on the base size for the action of a symmetric group on $k$-sets are a key ingredient in the proof of the bounds in Theorem~\ref{t:main2} for even degree alternating groups.

\begin{remk}
In many cases, our probabilistic approach also yields strong asymptotic results. Indeed, if $\what{Q}(G,s,c) \to 0$ as $|G| \to \infty$, then almost every $c$-tuple of conjugates of $s$ is a total dominating set for $\Gamma(G)$. For instance, suppose $G$ is an exceptional group of Lie type, in which case Theorem \ref{t:main4}(ii) gives $\gamma_u(G) \leqs 6$. By combining the proof of Theorem \ref{t:ExceptionalUDN} with \cite[Theorem 2]{BLS}, we deduce that there is an element $s \in G$ such that the probability that $6$ randomly chosen conjugates of $s$ form a total dominating set for $\Gamma(G)$ tends to $1$ as $|G| \to \infty$.
\end{remk}

As noted above, in order to effectively apply the probabilistic approach, we need to find an element $s \in G^{\#}$ that is contained in a small number of maximal subgroups of $G$. In this way, it is natural to consider the parameter
\[
\mu(G) = \min_{s \in G}|\M(G,s)|.
\]
We establish the following result for simple groups.

\begin{theorem}\label{t:main5}
If $G$ is a finite simple group, then either $\mu(G) \leqs 3$ or $(G,\mu(G))$ is one of the following:
\[
{\renewcommand{\arraystretch}{1.1}
\begin{array}{lcccc}
\hline
G      & {\rm U}_{6}(2) & {\rm U}_4(3) & \O_8^+(2) & {\rm P\O}_8^+(3) \\
\mu(G) & 4              & 5            & 7         & 7                \\
\hline
\end{array}}
\]
In particular, $\mu(G) \leqs 7$ for every finite simple group $G$.
\end{theorem}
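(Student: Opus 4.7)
The plan is to proceed through the Classification of Finite Simple Groups, producing for each $G$ an explicit element $s \in G^{\#}$ with $|\M(G,s)|$ small, and in the four listed cases verifying that no better choice exists. The guiding principle is to take $s$ of \emph{Singer-like} type, whose centraliser is a cyclic torus (or a normaliser thereof) of near-maximal order, since such an element is forced into a very short list of maximal overgroups.

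For $G = A_n$ with $n \geqs 5$, I would choose $s$ to be a $p$-cycle, or a product of a $p$-cycle with a short fixed cycle, for a prime $p$ with $n/2 < p \leqs n$ (which exists by Bertrand's postulate). The intransitive and imprimitive maximal overgroups of $s$ are easily read off from its cycle type, while the primitive maximal overgroups are sharply constrained by the classical results of Jordan and Marggraff on primitive groups containing a cycle of large prime length, yielding $|\M(A_n,s)| \leqs 3$. For the sporadic groups, the required data are available in the \textsc{Atlas} and the GAP character table library: for each of the $26$ groups, the maximal overgroups of every conjugacy class representative are tabulated, and a finite inspection produces an $s$ with $|\M(G,s)| \leqs 3$.

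The bulk of the work is for the groups of Lie type. Here I would take $s$ to be a regular semisimple element of prime order $r$ dividing $\Phi_e(q)$ for a well-chosen cyclotomic index $e$; by construction, $s$ acts almost irreducibly on the natural module, and its maximal overgroups are controlled by Aschbacher's theorem together with the Guralnick--Penttila--Praeger--Saxl classification of subgroups of classical groups containing a large primitive prime divisor element (and, for exceptional groups, by work of Weigel and others on overgroups of elements of maximal torus order). The ppd choice of $s$ kills every Aschbacher class other than the torus normaliser $N_G(T)$, leaving only a short list of possible $\ms$-subgroups to rule out. For example, when $G \in \{{}^2B_2(q),{}^2G_2(q),E_8(q)\}$, the only maximal overgroup of a suitably chosen $s$ is $N_G(T)$ and one obtains $\mu(G) = 1$; a parallel analysis handles the remaining exceptional families and every classical family outside the four exceptional cases.

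The main obstacle, and the source of the four exceptional tuples $\mathrm{U}_6(2)$, $\mathrm{U}_4(3)$, $\O_8^+(2)$ and ${\rm P\O}_8^+(3)$, is the low-dimension, small-field regime: the natural module is then small enough to support many exceptional $\ms$-subgroups, and for the two $D_4$ groups the maximal subgroup lattice is further enlarged by triality, so the best ppd element fails to avoid all of them. For these four groups I would compute $\mu(G)$ exactly by consulting the complete \textsc{Atlas} list of maximal subgroups and their element orders: for every conjugacy class of $G$, record in which maximals it appears, and take the minimum. The upper bounds $4,5,7,7$ are immediate from exhibiting a single class, while the delicate part is the matching lower bound asserting that no class does better, and this case-by-case verification is what pins down the four exceptional entries.
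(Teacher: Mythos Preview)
Your overall architecture matches the paper's: split by CFSG and, for each family, exhibit an explicit $s$ with few maximal overgroups. For sporadic groups (Atlas/\textsf{GAP} inspection), exceptional groups (Weigel's work on overgroups of maximal-torus generators), and classical groups (the Guralnick--Kantor elements, with GPPS invoked for the residual even-characteristic symplectic case and direct computation for the four small exceptions), your sketch coincides with what the paper actually does.

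The point of divergence, and a genuine gap, is the alternating case. A bare $p$-cycle with $p\in(n/2,n]$ fixes $n-p$ points, and every nonempty subset of that fixed-point set gives a \emph{distinct} intransitive maximal subgroup containing $s$, so already $|\M(A_n,s)|\geqs 2^{n-p}-1$. Bertrand's postulate gives no control on $n-p$ (for $n=120$ the largest prime below is $113$, producing $127$ intransitive overgroups), and the Jordan--Marggraff results you cite only constrain the \emph{primitive} overgroups, not the intransitive ones. Passing to shape $[p,n-p]$ repairs this when $n$ is even and indeed yields $\mu=1$, but for odd $n$ that element is an odd permutation and is not in $A_n$. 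The paper does not use a Bertrand prime at all: for even $n\geqs 8$ it takes two coprime cycles of length near $n/2$; for odd $n\geqs 25$ it takes three cycles of length near $n/3$ (shapes $[m+2,m,m-2]$, $[m+1,m+1,m-1]$, $[m+2,m,m]$ according to the residue of $n$ mod $3$), obtaining exactly three intransitive overgroups, then excludes imprimitive overgroups by an explicit divisibility check and primitive ones via Marggraff. Your large-prime idea does not produce such an element for odd $n$ without essentially reverting to this three-cycle construction.
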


In fact, we can compute the exact value of $\mu(G)$ for any alternating or sporadic group $G$ (see Theorems \ref{t:AltMax} and \ref{t:SporadicMax}), and it is worth noting that there are infinitely many alternating groups $G$ with $\mu(G)=3$. The result for alternating and classical groups is essentially a corollary of the proof of the main theorem of Guralnick and Kantor \cite{GK}, which identifies an explicit element that is contained in very few maximal subgroups (typically, this is a regular semisimple element when $G$ is a classical group). Finally, we appeal to earlier work of Weigel \cite{Wei}, which shows that almost every finite simple exceptional group of Lie type has an element that is contained in a unique maximal subgroup.

\vs

Our notation is standard. We adopt the notation from \cite{KL} for simple groups, so we write ${\rm L}_{n}(q) = {\rm PSL}_{n}(q)$ and ${\rm U}_{n}(q) = {\rm PSU}_{n}(q)$ for linear and unitary groups, and ${\rm P\O}_{n}^{\e}(q)$ is a simple orthogonal group, etc. In addition, we will write $(a_1, \ldots, a_k)$ for the greatest common divisor of a collection of positive integers $a_1, \ldots, a_k$. 


\section{Methods}\label{s:prel}

In this section we introduce some of the main tools that will be needed in the proofs of Theorems~\ref{t:main1}--\ref{t:main5}. First, in Section~\ref{ss:base}, we establish an important connection between the uniform domination number and base sizes. Our probabilistic approach to bounding the uniform domination number, based on fixed point ratio estimates, is presented in Section~\ref{ss:prob}; here the main result is Lemma~\ref{l:ProbMethod}. Finally, in Section~\ref{ss:comp}, we outline some of the computational methods that we will employ.

\subsection{Bases}\label{ss:base}

Let $G$ be a finite group with generating graph $\Gamma(G)$. Recall that a subset $S \subseteq G^{\#}$ is a \emph{total dominating set} (TDS for short) for $\Gamma(G)$ if for all $g \in G^{\#}$ there exists $s \in S$ such that $G = \la g, s \ra$. For any $g \in G$, write $\M(G,g)$ for the set of maximal subgroups of $G$ containing $g$.

\begin{lem}\label{l:Criterion}
A subset $\{s_1,\ldots,s_c\} \subseteq G^{\#}$ is a total dominating set for $\Gamma(G)$ if and only if 
\[ 
\bigcap_{i=1}^{c} H_i = 1 
\]
for all $(H_1,\ldots,H_c) \in \prod_{i=1}^{c} \M(G,s_i)$. 
\end{lem}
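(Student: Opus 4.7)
The plan is to unwind both sides of the biconditional using one standard fact: in a finite group $G$, a subset $X \subseteq G$ generates $G$ if and only if no maximal subgroup of $G$ contains $X$. Specialising to pairs, for each $g \in G^{\#}$ and each $s_i$, the equality $G = \la g, s_i \ra$ holds precisely when no maximal subgroup of $G$ contains both $g$ and $s_i$, that is, when $g \notin H$ for every $H \in \M(G, s_i)$. This observation is essentially the only substantive ingredient; the rest of the argument is formal quantifier juggling.

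With this in hand, I would unfold the definition of a total dominating set directly. By definition, $\{s_1,\ldots,s_c\}$ is a TDS for $\Gamma(G)$ if and only if for every $g \in G^{\#}$ there exists $i \in \{1,\ldots,c\}$ with $G = \la g, s_i \ra$, which by the observation is equivalent to: for every $g \in G^{\#}$ there is some $i$ such that $g \notin H$ for all $H \in \M(G, s_i)$.

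To match the form of the stated criterion, I would pass to the contrapositive. The set $\{s_1,\ldots,s_c\}$ fails to be a TDS precisely when some $g \in G^{\#}$ has the property that for every $i$ there exists $H_i \in \M(G, s_i)$ with $g \in H_i$; equivalently, some tuple $(H_1,\ldots,H_c) \in \prod_{i=1}^{c} \M(G, s_i)$ satisfies $g \in \bigcap_{i=1}^{c} H_i$. Negating, $\{s_1,\ldots,s_c\}$ is a TDS if and only if no such tuple contains a non-identity element in its intersection, i.e.\ $\bigcap_{i=1}^{c} H_i = 1$ for every $(H_1,\ldots,H_c) \in \prod_{i=1}^{c} \M(G, s_i)$. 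I do not anticipate any real obstacle in the argument; the only care needed is to keep the quantifier ``$\exists i$'' on the TDS side aligned with ``$\forall (H_1,\ldots,H_c)$'' on the maximal subgroup side under negation, and to apply the maximal subgroup criterion for generation coordinatewise.
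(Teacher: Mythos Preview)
Your proposal is correct and follows essentially the same approach as the paper: both argue via the contrapositive, using that $G \ne \la g, s_i \ra$ is equivalent to $g$ lying in some $H_i \in \M(G,s_i)$, and conclude that the failure of the TDS condition is witnessed by a tuple $(H_1,\ldots,H_c)$ with non-trivial intersection. The paper's proof is somewhat more terse, but the logical content is identical.
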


\begin{proof}
Let $S = \{ s_1,\ldots,s_c \}$. By definition, $S$ is not a total dominating set if and only if there exists $g \in G^{\#}$ such that $G \ne \<g,s_i\>$ for all $i$; that is, for each $i$, $g$ is contained a maximal subgroup of $G$ containing $s_i$. So $S$ is not a total dominating set if and only if there exists $(H_1,\ldots,H_c) \in \prod_{i} \M(G,s_i)$ such that $\bigcap_{i} H_i \neq 1$.  The result follows.
\end{proof}

Let $G$ be a group acting faithfully on a finite set $\O$. Recall that a subset $B \subseteq \O$ is a \emph{base} if the pointwise stabiliser of $B$ in $G$ is trivial. Write $b(G,\O)$ for the minimal size of a base for the action of $G$ on $\O$. Note that if $G$ is transitive on $\Omega$ and $H$ is a point stabiliser, then $b(G,\O) \leqs c$ if and only if there exist $g_1, \ldots, g_c \in G$ such that 
\[ 
\bigcap_{i=1}^{c} H^{g_i} = 1. 
\] 
Let us also observe that 
\begin{equation}\label{e:lb}
b(G,\O) \geqs \frac{\log|G|}{\log|\O|}.
\end{equation}

\begin{cor}\label{c:CriterionBase}
Suppose there is an element $s \in G$ such that $\M(G,s) = \{H\}$ and $H$ is core-free. Then $b(G,G/H)$ is the minimal size of a total dominating set for $\Gamma(G)$ containing only conjugates of $s$.
\end{cor}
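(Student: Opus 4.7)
The plan is to establish a direct bijection between size-$c$ total dominating sets of $\Gamma(G)$ consisting of conjugates of $s$ and families of $c$ conjugates of $H$ whose intersection is trivial, then invoke the characterisation of base size recorded just above the statement.

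First I would record the key observation that $\M(G, s^g) = \{H^g\}$ for every $g \in G$. This follows from the hypothesis $\M(G,s) = \{H\}$ and the fact that the map $M \mapsto M^g$ gives a bijection between maximal subgroups containing $s$ and maximal subgroups containing $s^g$. Hence, for any tuple $(g_1, \ldots, g_c)$ and $s_i := s^{g_i}$, we have $\M(G, s_i) = \{H^{g_i}\}$, so the product set $\prod_i \M(G, s_i)$ is the singleton $\{(H^{g_1}, \ldots, H^{g_c})\}$.

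Next I would apply Lemma~\ref{l:Criterion} directly: the set $\{s_1, \ldots, s_c\}$ is a total dominating set for $\Gamma(G)$ if and only if $\bigcap_{i=1}^c H^{g_i} = 1$. Combining this with the characterisation of base size stated in the paragraph preceding the corollary (namely, for a transitive action with point stabiliser $H$, $b(G, G/H) \leqs c$ iff there exist $g_1, \ldots, g_c \in G$ with $\bigcap_{i=1}^c H^{g_i} = 1$), we conclude that a total dominating set of conjugates of $s$ of size $c$ exists if and only if $b(G, G/H) \leqs c$. The hypothesis that $H$ is core-free ensures that the action of $G$ on $G/H$ is faithful, so that $b(G, G/H)$ is well-defined and finite. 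Taking the minimal such $c$ gives the claimed equality.

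There is no real obstacle here; the statement is essentially a reformulation packaging Lemma~\ref{l:Criterion} in the language of bases. The only point requiring a little care is the conjugation equivariance $\M(G, s^g) = \M(G,s)^g$, which is immediate. The value of the corollary lies not in its depth but in what it makes available: whenever one can locate a single $s \in G$ lying in a unique (core-free) maximal subgroup $H$, the computation of $\gamma_u(G)$ reduces to a base-size calculation for the primitive action on $G/H$, and so the extensive literature on base sizes for primitive almost simple groups (cited in the introduction) can be brought to bear.
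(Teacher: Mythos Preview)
Your proof is correct and follows essentially the same route as the paper's: both establish $\M(G,s^{g_i}) = \{H^{g_i}\}$ by conjugation, invoke Lemma~\ref{l:Criterion} to convert the TDS condition into $\bigcap_i H^{g_i} = 1$, and then appeal to the stated characterisation of base size. The paper's version is simply a terser presentation of the same argument.
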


\begin{proof}
Let $c$ be a positive integer. As noted above, $b(G,G/H) \leqs c$ if and only if there exist $g_1, \ldots, g_c \in G$ such that $\bigcap_{i} H^{g_i} = 1$. Since $\M(G,s^{g_i}) = \{ H^{g_i} \}$ for each $i$, the result follows from Lemma~\ref{l:Criterion}.
\end{proof}

This corollary connects the study of base sizes for primitive permutation groups to the existence of total dominating sets comprising conjugate elements. In particular, it leads us naturally to the notion of the \emph{uniform domination number} $\gamma_u(G)$ of $G$ introduced in Section~\ref{s:intro}, which is our main focus in this paper. Recall that this is defined to be the minimal number of conjugate elements that form a total dominating set for $\Gamma(G)$. 

The main goal of this paper is to study $\gamma_u(G)$ for finite simple groups $G$. (By the main theorem of \cite{GK}, simple groups have uniform spread $1$ and thus $\gamma_u(G)$ is well-defined.) Of course, in this situation every proper subgroup of $G$ is core-free and so we are in a position to apply Corollary~\ref{c:CriterionBase}. Indeed, if we can identify an element $s \in G$ with $\M(G,s) = \{H\}$ then $\gamma_u(G) \leqs b(G,G/H)$ and we can appeal to the extensive literature on bases for simple groups, which is a topic that has seen a great deal of activity in recent years (see \cite{B07,BGS,BLS,BOW,Hal}, for example). In the next section, we will develop a probabilistic approach which can be used to obtain upper bounds on $\gamma_u(G)$ in the general case where we have an element $s \in G$ that is contained in several maximal subgroups. 

We conclude this section by recording a result which allows us to exploit information on base sizes to determine lower bounds on $\gamma_u(G)$.

\begin{cor}\label{c:LowerBound}
Let $s \in G^{\#}$ and let $H \in \M(G,s)$ with $b(G,G/H) = b$. Then any total dominating set for $\Gamma(G)$ containing only conjugates of $s$ has size at least $b$.
\end{cor}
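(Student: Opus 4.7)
The plan is to deduce this directly from Lemma~\ref{l:Criterion} by exhibiting, from any total dominating set of conjugates of $s$, an explicit base for the action of $G$ on $G/H$ of the same size. Since the lemma gives a clean necessary condition for being a total dominating set in terms of maximal subgroups containing each element, and since the hypothesis pins down a specific maximal overgroup $H$ of $s$ whose conjugates we can track, no probabilistic machinery is required.

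First I would take an arbitrary total dominating set $S = \{s_1, \ldots, s_c\}$ for $\Gamma(G)$ consisting of conjugates of $s$, and for each $i$ fix $g_i \in G$ with $s_i = s^{g_i}$. The key observation is that conjugation sends maximal subgroups containing $s$ to maximal subgroups containing $s^{g_i}$, so $H^{g_i} \in \M(G, s_i)$ for every $i$. Applying Lemma~\ref{l:Criterion} to the tuple $(H^{g_1}, \ldots, H^{g_c}) \in \prod_{i=1}^c \M(G, s_i)$, we obtain
\[
\bigcap_{i=1}^{c} H^{g_i} = 1.
\]

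Next I would translate this into a statement about bases. Identifying $G/H$ with the set of left cosets under left multiplication, the stabiliser of $gH$ is $gHg^{-1} = H^{g^{-1}}$, so the displayed equation says exactly that the points $g_1^{-1} H, \ldots, g_c^{-1} H$ have trivial pointwise stabiliser, i.e.\ form a base for the action of $G$ on $G/H$. Hence $c \geqs b(G, G/H) = b$, which is the required inequality.

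There is no real obstacle here: the proof is essentially a one-line dualisation of Corollary~\ref{c:CriterionBase}, with the only subtlety being the correct choice of $H_i \in \M(G, s_i)$ in order to invoke Lemma~\ref{l:Criterion}. The conjugacy hypothesis on $S$ is precisely what makes this choice available uniformly across all $i$.
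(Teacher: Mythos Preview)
Your proof is correct and follows essentially the same route as the paper: both choose $H^{g_i} \in \M(G,s^{g_i})$, invoke Lemma~\ref{l:Criterion} to obtain $\bigcap_i H^{g_i} = 1$, and conclude $b \leqs c$. The only difference is that you spell out the translation from the trivial intersection to an explicit base on $G/H$, whereas the paper simply cites the equivalence noted just before Corollary~\ref{c:CriterionBase}.
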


\begin{proof}
Let $\{s^{g_1}, \ldots, s^{g_c}\}$ be a total dominating set for $\Gamma(G)$. Then $H^{g_i} \in \M(G,s^{g_i})$ for all $i$. Therefore, Lemma~\ref{l:Criterion} implies that $\bigcap_{i}H^{g_i} = 1$ and thus $b \leqs c$.
\end{proof}

\begin{rem}\label{r:LowerBound}
We can use Corollary~\ref{c:LowerBound} to derive lower bounds on $\gamma_u(G)$. Indeed, if there is a positive integer $c$ such that for each $s \in G^{\#}$ there exists $H \in \M(G,s)$ with $b(G,G/H) \geqs c$, then $\gamma_u(G) \geqs c$. Of course, one only needs to check this condition on $s$ for a set of conjugacy class representatives. In fact, it suffices only to check for a set $\{g_1, \ldots, g_m\}$ of class representatives with the property that for all $x \in G^{\#}$ there exists $y \in g_1^G \cup \cdots \cup g_m^G$ such that $x = y^{\ell}$ for some integer $\ell$.
\end{rem}

\subsection{Probabilistic methods}\label{ss:prob}

In \cite{LSh99}, Liebeck and Shalev introduced a probabilistic approach for studying the base size of a finite transitive permutation group $G \leqs {\rm Sym}(\O)$. The basic idea is to consider the probability
that a randomly chosen $c$-tuple of points in $\O$ is \emph{not} a base for $G$ and then show that this probability is strictly less than $1$ for some
appropriate positive integer $c$; this immediately implies that $b(G,\O)
\leqs c$. This has proven to be an effective way of establishing accurate (upper) bounds on the base size of almost simple primitive permutation groups; indeed, this is the main tool in the proof of an influential conjecture of Cameron on the base size of so-called \emph{non-standard} primitive groups (see \cite{BLS} and the references therein). Here our goal is to develop a similar approach to study the uniform domination number of simple groups.

Let $G$ be a finite group, let $c$ be a positive integer and fix an element $s \in G^{\#}$. Write $Q(G,s,c)$ for the probability that a random $c$-tuple $(z_1, \ldots, z_c)$ of conjugates of $s$ is such that $\{ z_1, \ldots, z_c \}$ is \emph{not} a total dominating set for $\Gamma(G)$. Consequently, $\gamma_u(G) \leqs c$ if $Q(G,s,c) < 1$ for some $s$. In order to present an upper bound for $Q(G,s,c)$, we need some additional notation. For an element $x \in G$ and a subgroup $H < G$, let 
\begin{equation}\label{e:fpr}
\fpr(x,G/H) = \frac{|x^G \cap H|}{|x^G|}
\end{equation}
be the \emph{fixed point ratio} of $x$ in the action of $G$ on the set of cosets $G/H$; that is, $\fpr(x,G/H)$ is the proportion of points in $G/H$ fixed by $x$ (equivalently, it is the probability that a randomly chosen coset of $H$ is fixed by $x$). Let $\{x_1,\ldots, x_k\}$ be a set of representatives of the conjugacy classes in $G$ of prime order elements. 

We can now present our key lemma for studying uniform domination numbers.

\begin{lem}\label{l:ProbMethod}
Let $G$ be a finite group, $s \in G^{\#}$ and $c \in \Nat$. Then 
\begin{equation}\label{e:qsc}
Q(G,s,c) \leqs \sum_{i=1}^{k} |x_i^G| \left( \sum_{H \in \M(G,s)}^{} \fpr(x_i,G/H) \right)^c \eqqcolon \widehat{Q}(G,s,c).
\end{equation}
\end{lem}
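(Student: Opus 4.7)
The plan is to bound the bad event by a two-stage union bound: first over potential ``witnesses'' $x \in G^{\#}$ to the failure of the total dominating property, and then, for each fixed witness, over the list of maximal subgroups of $G$ containing $s$.

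First I would use Lemma~\ref{l:Criterion} to rephrase the event that $\{z_1,\dots,z_c\}$ fails to be a TDS: this happens precisely when there exists $x\in G^{\#}$ such that, for every $i$, some $H_i \in \M(G,z_i)$ contains $x$. The key reduction is that it suffices to take $x$ of prime order: if $x$ witnesses the failure and $x'$ is any non-identity element of $\langle x\rangle$ of prime order, then $x'\in H_i$ as well, so $x'$ is also a witness. Hence by a union bound over a set of prime order class representatives and using that the joint distribution of $(z_1,\dots,z_c)$ is invariant under simultaneous conjugation,
\[
Q(G,s,c) \leqs \sum_{i=1}^{k}\,|x_i^G|\cdot \mathbb{P}\bigl(x_i \text{ witnesses failure}\bigr).
\]

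Next, for a fixed prime order element $x$, I would compute the probability that a single random conjugate $z = s^g$ (with $g$ uniform in $G$) lies in a common maximal subgroup with $x$. Writing $\M(G,z) = \{H^g : H \in \M(G,s)\}$, the event ``$x \in H^g$ for some $H \in \M(G,s)$'' is bounded, via a union bound over $H$, by
\[
\sum_{H\in\M(G,s)} \mathbb{P}(x^{g^{-1}}\in H).
\]
Here a standard counting argument (each $y \in x^G \cap H$ has exactly $|C_G(x)|$ preimages under $g \mapsto x^{g^{-1}}$) gives $\mathbb{P}(x^{g^{-1}}\in H) = |x^G\cap H|/|x^G| = \fpr(x,G/H)$ by \eqref{e:fpr}.

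Finally, since the $c$ conjugates $z_1,\dots,z_c$ are chosen independently and the event for a fixed witness $x$ factorises across the coordinates, raising to the $c$-th power yields
\[
\mathbb{P}\bigl(x_i \text{ witnesses failure}\bigr) \leqs \left(\sum_{H\in\M(G,s)}\fpr(x_i,G/H)\right)^{c},
\]
and substituting into the outer union bound gives the stated inequality. The only real subtlety is the reduction to prime order witnesses (which handles the a priori infinite-looking union over all $x \in G^{\#}$ and makes the bound genuinely useful); the rest amounts to a careful bookkeeping of conjugation, a translation of probabilities of containment into fixed point ratios, and independence across the $c$ coordinates.
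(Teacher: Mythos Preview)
Your proposal is correct and follows essentially the same line as the paper's proof: reduce to prime order witnesses, bound the per-coordinate event $G \ne \langle x, s^g\rangle$ by the union bound $\sum_{H\in\M(G,s)}\fpr(x,G/H)$ via the observation that $G \ne \langle x, s^g\rangle$ iff $x^{g^{-1}}$ lies in some $H\in\M(G,s)$, and then use independence across the $c$ coordinates. The only cosmetic difference is that the paper packages the single-coordinate bound as a quantity $P(x,s)$ rather than invoking Lemma~\ref{l:Criterion} explicitly.
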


\begin{proof}
Let $\mathcal{P}$ be the set of elements in $G$ of prime order.  For each $x \in G$, let 
\[
P(x,s) = \frac{|\{ z \in s^G \,:\, G \ne \< x, z \>\}|}{|s^G|} = \frac{|\{ g \in G \,:\, G \ne \< x, s^g \>\}|}{|G|}
\]
be the probability that a randomly chosen conjugate of $s$ does not generate $G$ with $x$. Since $G \ne \< x, s^g \>$ if and only if $x^{g^{-1}} \in H$ for some $H \in \mathcal{M}(G,s)$, it follows that 
\[
P(x,s) \leqs \sum_{H \in \mathcal{M}(G,s)} \frac{|x^G \cap H|\,|C_G(x)|}{|G|} = \sum_{H \in \mathcal{M}(G,s)} {\rm fpr}(x,G/H).
\]
Now $\{s^{g_1}, \ldots, s^{g_c}\}$ is not a total dominating set for $\Gamma(G)$ if and only if there exists 
$x \in \mathcal{P}$ such that $G \ne \<x,s^{g_i}\>$ for all $i$. Therefore,
\begin{equation}\label{e:pxs}
Q(G,s,c) \leqs \sum_{x \in \mathcal{P}} P(x,s)^c = \sum_{i=1}^{k} |x_i^G|\, P(x_i,s)^c
\end{equation}
and the result follows.     
\end{proof}

\begin{rem}
Note that if $\M(G,s)=\{H\}$, then the probabilistic approach via Lemma~\ref{l:ProbMethod} coincides with the method introduced by Liebeck and Shalev in \cite{LSh99} to study the base size $b(G,G/H)$. Accordingly, Lemma~\ref{l:ProbMethod} gives no more information than Corollary~\ref{c:CriterionBase} in this case. However, the utility of Lemma~\ref{l:ProbMethod} is that it allows us to handle groups for which every element belongs to at least two maximal subgroups.
\end{rem}

The following elementary observation is a natural extension of \cite[Proposition 2.3]{BLS}. 

\begin{lem}\label{l:bd}
Let $G$ be a finite group and let $\{H_1, \ldots, H_{\ell}\}$ be proper subgroups of $G$. Suppose that $x_1, \ldots, x_m$ represent distinct $G$-classes such that $\sum_{i}|x_i^G \cap H_j| \leqs A_j$ and $|x_i^G| \geqs B$ for all $i,j$. Then
\[
\sum_{i=1}^{m}|x_i^G|\left(\sum_{j=1}^{\ell}{\rm fpr}(x_i,G/H_j)\right)^c \leqs B^{1-c}\left(\sum_{j}A_j\right)^c
\]
for all $c \in \mathbb{N}$.
\end{lem}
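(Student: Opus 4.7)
The plan is to unfold the definition of the fixed point ratio and reduce the inequality to two elementary facts: monotonicity of $t \mapsto t^{1-c}$ in $t$ (using $c \geqs 1$), together with the $\ell^c \leqs \ell^1$-style inequality $\sum t_i^c \leqs (\sum t_i)^c$ for non-negative reals.

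First I would introduce abbreviations $t_i \coloneqq \sum_{j=1}^{\ell} |x_i^G \cap H_j|$. Swapping the order of summation and applying the hypothesis column-by-column gives
\[
\sum_{i=1}^{m} t_i = \sum_{j=1}^{\ell} \sum_{i=1}^{m} |x_i^G \cap H_j| \leqs \sum_{j=1}^{\ell} A_j \eqqcolon T.
\]
By the definition \eqref{e:fpr} of the fixed point ratio, the left-hand side of the claimed inequality equals $\sum_{i=1}^{m} |x_i^G|^{1-c} t_i^c$.

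Since $c \in \Nat$ we have $1 - c \leqs 0$, so the hypothesis $|x_i^G| \geqs B$ yields $|x_i^G|^{1-c} \leqs B^{1-c}$, and hence
\[
\sum_{i=1}^{m} |x_i^G|^{1-c} t_i^c \leqs B^{1-c} \sum_{i=1}^{m} t_i^c.
\]
It therefore suffices to verify $\sum_{i=1}^{m} t_i^c \leqs T^c$. For this, note that each $t_i \geqs 0$ satisfies $t_i \leqs T$, so $t_i^c = t_i \cdot t_i^{c-1} \leqs t_i \cdot T^{c-1}$; summing over $i$ and using $\sum_i t_i \leqs T$ gives $\sum_i t_i^c \leqs T^c$, as required.

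Combining the two displayed bounds yields the claimed inequality. There is no real obstacle here: the result is a direct book-keeping manipulation, and the only point worth flagging is that the hypothesis $c \geqs 1$ is used twice, once to turn $|x_i^G|^{1-c}$ into the constant factor $B^{1-c}$ and once to replace the $\ell^c$-norm of $(t_i)$ by its $\ell^1$-norm raised to the $c$th power.
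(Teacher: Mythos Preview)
Your proof is correct and follows essentially the same approach as the paper: both unfold the fixed point ratio to rewrite the left-hand side as $\sum_i |x_i^G|^{1-c}\bigl(\sum_j |x_i^G\cap H_j|\bigr)^c$, replace $|x_i^G|^{1-c}$ by $B^{1-c}$ using $c\geqs 1$, and then use $\sum_i t_i^c \leqs \bigl(\sum_i t_i\bigr)^c$ for non-negative reals. The paper's version is just more compressed, writing these steps as a single chain of inequalities without isolating the $\ell^c\leqs\ell^1$ step explicitly.
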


\begin{proof}
Write $a_{ij} = |x_i^G \cap H_j|$ and $b_i = |x_i^G|$, so $\sum_{i}a_{ij} \leqs A_j$ and $b_i \geqs B$. Then the left hand side of the required inequality is
\[
\sum_{i}b_i\left(\sum_j a_{ij}/b_i\right)^c = \sum_{i} b_{i}^{1-c}\left(\sum_j a_{ij}\right)^c \leqs B^{1-c}\sum_{i}\left(\sum_j a_{ij}\right)^c \leqs B^{1-c}\left(\sum_{i,j} a_{ij}\right)^c
\]
and the result follows.
\end{proof}

It is natural to expect that the upper bound in \eqref{e:qsc} will be easier to compute if we can find an element $s \in G$ such that $|\M(G,s)|$ is small. With this in mind, it is interesting to study the following parameter
\begin{equation}\label{e:mu}
\mu(G) = \min_{s \in G}|\M(G,s)|,
\end{equation} 
which we introduced in Section~\ref{s:intro}. Our main result is Theorem~\ref{t:main5}, which reveals that every finite simple group has an element that is contained in very few maximal subgroups (at most $3$ in fact, apart from $4$ specific exceptions). 

\subsection{Computational methods}\label{ss:comp}

For some small simple groups $G$, we can use computational methods, implemented in \textsf{GAP} \cite{GAP} and \textsc{Magma} \cite{magma}, to study $\mu(G)$ and $\gamma_u(G)$. For example, all of our results for sporadic groups (see Section~\ref{s:spor}) are obtained by computation. Here we outline the main techniques. 

A detailed description of these computations can be found at \cite{BH_comp}, including the relevant \textsf{GAP} and {\sc Magma} code we used to obtain the results.

\subsubsection{Probabilistic methods}\label{sss:comp_prob}

Let us first describe an implementation of the probabilistic method introduced in Section~\ref{ss:prob}. Let $G$ be a finite group and fix an element $s \in G^{\#}$. Our aim is to determine the minimal value of $c$ such that $\what{Q}(G,s,c) < 1$ (see Lemma~\ref{l:ProbMethod}). In order to calculate $\what{Q}(G,s,c)$ we first need to determine $\M(G,s)$, and then calculate the fixed point ratios $\fpr(x,G/H)$ for each prime order element $x \in G$ and subgroup $H \in \M(G,s)$. If such a subgroup $H$ is self-normalising, then $s$ is contained in exactly 
\[
\fpr(s,G/H)\cdot|G:H|
\] 
distinct conjugates of $H$. Therefore, if $G$ is simple then in order to determine $\M(G,s)$ it suffices to compute $\fpr(s,G/H)$ for a representative $H$ of each conjugacy class of maximal subgroups of $G$. Hence, we focus on determining the fixed point ratios $\fpr(x,G/H)$ for elements $x \in G$ and maximal subgroups $H < G$. Of course, this approach via \textsf{GAP} and \textsc{Magma} is only feasible if $G$ is amenable to computational methods, which typically means that the order of $G$, or the minimal degree of a faithful permutation representation, is not too large.

If the Character Table Library \cite{CTblLib} in \textsf{GAP} contains the ordinary character tables of $G$ and each of its maximal subgroups, then we can adopt the techniques of Breuer, which are detailed in \cite[Section 3.2]{Breuer}. Indeed, in this situation we can compute $\fpr(x,G/H)$ by observing that the number of fixed points of $x$ on $G/H$ is equal to $\chi(x)$, where $\chi=1^G_H$ is the corresponding permutation character. If this character-theoretic approach is not available, then we turn to \textsc{Magma}. If the functions \texttt{MaximalSubgroups} and \texttt{Classes} return the maximal subgroups and conjugacy classes of $G$, then we can calculate $\fpr(x,G/H)$ via \eqref{e:fpr}, using \texttt{IsConjugate} to compute $|x^G \cap H|$. 

\subsubsection{Maximal subgroups}\label{sss:comp_max}

We can often use the above methods to determine the maximal overgroups of a specific element of $G$, which allows us to determine $\mu(G)$ in this way. In addition, by determining $\M(G,s)$ for a complete set of conjugacy class representatives $s$, we can apply the observations in Remark~\ref{r:LowerBound} to derive a lower bound on $\gamma_u(G)$.

\subsubsection{Random and exhaustive searches}\label{sss:comp_search}

Let $c \geqs 2$ be an integer and fix $s \in G^{\#}$. If we have a faithful permutation representation of $G$, which permits calculation in {\sc Magma}, then we can randomly choose $c$-tuples of conjugates of $s$ and check whether they form a TDS for $\Gamma(G)$. Of course, if we find such a $c$-tuple, then $\gamma_u(G) \leqs c$. In contrast, to establish the bound $\gamma_u(G) > c$ we must show that for each conjugacy class $s^G$, there are no $c$-tuples of conjugates of $s$ which form a TDS. Here it is helpful to observe that if there is such a $c$-tuple, then there is one containing $s$. Therefore, when trying to verify upper (or lower) bounds on $\gamma_u(G)$ we may randomly (or exhaustively) choose $(c-1)$-tuples of elements in $s^G$ and check whether they, together with $s$, form a TDS for $\Gamma(G)$.

\begin{rem}\label{r:computation}
Let us say a few words on the computational resources needed for the main calculations. For the computations in this paper, we use a combination of \textsf{GAP} Version 4.5.6 and {\sc Magma} 2.19-2, on a 2.7GHz machine with 128\,GB RAM. The character-theoretic computations run quickly in \textsf{GAP} and we adopt this approach whenever possible. The computations in \textsc{Magma} for determining maximal overgroups of specific elements and implementing the probabilistic approach (via fixed point ratios) are more resource-intensive, but still feasible for the groups we are interested in. For example, an implementation of the probabilistic method applied to ${\rm L} _9(2)$ with an element of order $465$ (see Proposition~\ref{p:bdc}) can be done in $616$ seconds, using $771\,\text{MB}$ of memory. Similar resources are needed for most of the exhaustive searches in \textsc{Magma}, which we use to rule out the existence of total domination sets with prescribed properties. However, the verification of the bound $\gamma_u({\rm M}_{12}) \geqs 4$ is a notable exception. Here, in view of Corollary \ref{c:LowerBound} and the base size results in \cite{BOW}, we quickly reduce the problem to showing that no triple of elements in the class \texttt{6A} form a TDS for $\Gamma({\rm M}_{12})$. We timed this computation at $17613$ seconds, using $13\,\text{MB}$ of memory.
\end{rem}

For the remainder of the paper we will focus on the proofs of Theorems~\ref{t:main1}--\ref{t:main5}, by considering each family of (non-abelian) simple groups $G$ in turn. In each case, we first study the parameter $\mu(G)$ defined in \eqref{e:mu}, with the aim of establishing a strong form of Theorem~\ref{t:main5}. We then establish our main results on the uniform domination number of $G$.


\section{Alternating groups}\label{s:alt}

\subsection{Maximal overgroups}\label{ss:alt_max}

In this section we verify Theorem~\ref{t:main5} for alternating groups. More precisely, we compute the exact value of $\mu(G)$ for each simple alternating group $G$. In order to state our main result, let 
\begin{equation}\label{e:H}
\mathcal{H} = \left\{ n \in \mathbb{N}\,:\, \mbox{$n =\frac{q^d-1}{q-1}$ for some prime power $q$ and integer $d \geqs 2$} \right\}.
\end{equation}
We refer the reader to \cite[Table II]{BS} for a convenient list of the first $240$ primes in $\mathcal{H}$.

\begin{thm}\label{t:AltMax}
Let $G=A_n$ with $n \geqs 5$. 
Then $\mu(G) \leqs 3$. Moreover,
\begin{itemize}\addtolength{\itemsep}{0.2\baselineskip}
\item[{\rm (i)}] $\mu(G) = 1$ if and only if one of the following hold:
\begin{itemize}\addtolength{\itemsep}{0.2\baselineskip}
\item[{\rm (a)}] $n=5$;
\item[{\rm (b)}] $n \geqs 8$ is even;
\item[{\rm (c)}] $n \in \{r,r^2\}$, where $r$ is a prime, $n \not\in \{11,23\}$ and $n \not\in \mathcal{H}$.
\end{itemize}
\item[{\rm (ii)}] $\mu(G) = 2$ if and only if one of the following hold:
\begin{itemize}\addtolength{\itemsep}{0.2\baselineskip} 
\item[{\rm (a)}] $n \in \{6, 7, 11, 17, 23\}$; 
\item[{\rm (b)}] $n \in \{rs,r^3\}$, where $r,s$ are distinct odd primes and $n \not\in \mathcal{H}$.
\end{itemize}
\end{itemize}
In particular, Theorem~\ref{t:main5} holds for alternating groups.
\end{thm}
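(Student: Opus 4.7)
The plan is to select, for each $n \geq 5$, an element $s \in A_n$ of carefully prescribed cycle type and to classify $\mathcal{M}(A_n,s)$ explicitly using the O'Nan--Scott description of maximal subgroups of $A_n$; matching lower bounds on $\mu(A_n)$ are then verified by a case analysis over (cyclic-subgroup) conjugacy class representatives, as permitted by Remark~\ref{r:LowerBound}.

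For the upper bounds I would treat cases according to the arithmetic nature of $n$. When $n$ is prime, take $s$ to be an $n$-cycle; then any overgroup of $s$ in $S_n$ is primitive of prime degree, and the classical classification of such groups (due essentially to Burnside, and completed modulo CFSG by Feit and Guralnick--Magaard) lists the candidates explicitly: $\mathrm{AGL}_1(n) \cap A_n$, the almost simple groups with socle $\mathrm{PSL}_d(q)$ for each representation $n = (q^d-1)/(q-1) \in \mathcal{H}$, and the sporadic overgroups $\mathrm{PSL}_2(11), \mathrm{M}_{11} \leq A_{11}$ and $\mathrm{M}_{23} \leq A_{23}$. Tallying these yields $\mu(A_n)=1$ whenever $n$ is a prime outside $\mathcal{H} \cup \{11,23\}$. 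For composite $n$, I would take $s$ of cycle type $(a,b)$ with $\gcd(a,b)=1$ and $a+b=n$, chosen so that a suitable power of $s$ is a single short cycle: Jordan's theorem then forces any primitive overgroup to equal $A_n$ itself, while coprimality of $a$ and $b$ restricts imprimitive overgroups to block systems of size dividing both $a$ and $b$, hence forbids them outright unless $n$ is a prime power. The surviving maximal overgroups are then the intransitive subgroup $(S_a \times S_b) \cap A_n$, supplemented when $n$ is a prime power or $n \in \mathcal{H}$ by an unavoidable affine or projective primitive group, giving $\mu(A_n) \leq 2$ or $3$ as claimed.

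For the lower bounds I would enumerate cyclic-subgroup representatives and show each lies in at least the asserted number of maximal subgroups: elements with a fixed point lie in an intransitive $A_{n-1}$-overgroup, elements respecting a block system lie in an imprimitive overgroup, and elements of certain prime-power orders are forced into primitive affine or projective overgroups coming from the O'Nan--Scott list. The small cases $n \in \{5,6,7,11,17,23\}$ are completed by direct inspection of the ATLAS list of maximal subgroups. The main obstacle will be the combinatorial bookkeeping for composite $n$: one must verify that the chosen cycle type admits no unexpected imprimitive overgroup and that the count of primitive overgroups matches the theorem exactly. The case $n=r^3$ in part (ii)(b) is the most delicate, since $s$ must be chosen to display exactly two maximal overgroups while the affine group $\mathrm{AGL}_3(r) \cap A_n$ and imprimitive candidates with blocks of size $r$ and $r^2$ compete to force further overgroups; a coprime split $a+b=r^3$ with neither $a$ nor $b$ divisible by $r$ is the key to breaking all imprimitivity. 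A similar but cleaner argument handles $n=rs$ and $n=r^2$, and the even-degree case (i)(b) relies on the fact that for $n$ even one can arrange $a,b$ both odd and coprime, ensuring the intransitive subgroup is the unique maximal overgroup.
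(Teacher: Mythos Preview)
There is a genuine parity gap in your treatment of odd composite $n$. A permutation with cycle type $[a,b]$ and $a+b=n$ has sign $(-1)^{(a-1)+(b-1)}=(-1)^n$, so when $n$ is odd such an element is an odd permutation and does not lie in $A_n$ at all. In particular, the ``coprime split $a+b=r^3$'' you propose for $n=r^3$ produces an element of $S_n\setminus A_n$, and the same obstruction applies to every odd composite degree. This is not a detail that can be patched locally: for odd $n$, \emph{every} even permutation that is not an $n$-cycle has at least three disjoint cycles, hence lies in at least three intransitive maximal subgroups of $A_n$. Consequently, the only route to $\mu(A_n)\leqslant 2$ for odd $n$ is via an $n$-cycle, and your two-cycle strategy cannot give the upper bounds in parts (i)(c) and (ii)(b).

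The paper's argument is organised around exactly this dichotomy. For even $n\geqslant 8$ it takes $s=[k,n-k]$ with $k$ odd and $\gcd(k,n-k)=1$, as you suggest, and obtains $\mu=1$. For odd $n$ it first establishes $\mu\leqslant 3$ using a carefully chosen element with \emph{three} cycles (roughly $[m+2,m,m-2]$ when $n=3m$, and analogues otherwise), eliminating imprimitive overgroups via a divisibility criterion and primitive overgroups via Marggraf's theorem. The sharper bounds for $n\in\{r,r^2,rs,r^3\}$ are then obtained by taking $s$ to be an $n$-cycle and invoking Jones's classification of primitive groups containing a full cycle: for $n\in\{r,r^2\}\setminus\mathcal{H}$ this gives a unique overgroup (affine or imprimitive, respectively), and for $n\in\{rs,r^3\}\setminus\mathcal{H}$ it gives exactly the two imprimitive overgroups with blocks of the two available sizes. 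The matching lower bounds follow from the parity observation above together with a count of the ${\rm P\Gamma L}_d(q)$ overgroups of an $n$-cycle when $n\in\mathcal{H}$. Your plan for the even case and for prime $n$ is essentially correct and aligns with the paper; it is the odd composite case that needs to be rebuilt around $n$-cycles and three-cycle elements rather than two-cycle elements.
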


The proof of Theorem~\ref{t:AltMax} closely follows the proof of \cite[Proposition 7.1]{GK}, which identifies an element $g \in G$ that is contained in very few maximal subgroups. Indeed, the bound $\mu(G) \leqs 3$ is an immediate corollary of the proof of \cite[Proposition 7.1]{GK} (with a small correction when $n$ is an odd integer of the form $3m$) but more work is needed to compute the exact value in every case.

\begin{rem}\label{r:alt}
There are infinitely many primes $r$ with $\mu(A_r)=1$. To see this, we need to show that there are infinitely many prime numbers that are not contained in $\mathcal{H}$. For a real number $x$, let $\pi(x)$ be the number of primes less than or equal to $x$, and let $H(x)$ be the number of primes at most $x$ in $\mathcal{H}$. By the prime number theorem, we have 
$\pi(x) = (1+o(1))x(\log x)^{-1}$, whereas \cite[Theorem 4]{BS} gives
$H(x) \leqs 50x^{1/2}(\log x)^{-2}$ for $x \gg 0$. In other words, if $x$ is large enough then almost all primes at most $x$ are not in $\mathcal{H}$.
\end{rem}

In order to prove Theorem~\ref{t:AltMax}, we need to record some preliminary lemmas. The first follows from the main theorem in \cite{LPS}.

\begin{lem}\label{l:lps}
Let $G = A_n$ with $n \geqs 5$ and let $H$ be an intransitive subgroup of the form $(S_k \times S_{n-k}) \cap G$ with $k < n/2$, or an imprimitive subgroup $(S_k \wr S_{n/k}) \cap G$ with $1<k \leqs n/2$. Then $H$ is a maximal subgroup of $G$ unless $G = A_8$ and $H = (S_2 \wr S_4) \cap G$. 
\end{lem}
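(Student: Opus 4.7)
The strategy is to invoke the O'Nan--Scott classification of maximal subgroups of $S_n$ and $A_n$ as presented in \cite{LPS}, and then simply check that no exception in the LPS list lands inside the families we are considering except for the one in $A_8$ already recorded in the statement.

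First, I would recall the classical maximality statements in $S_n$: the intransitive subgroup $S_k \times S_{n-k}$ is maximal in $S_n$ precisely when $k \neq n-k$, and the imprimitive subgroup $S_k \wr S_{n/k}$ is maximal in $S_n$ whenever $1 < k < n$. Under the hypotheses of the lemma we have $k < n/2$ in the intransitive case and $1 < k \leqs n/2$ in the imprimitive case, so in both cases the ambient subgroup $M \leqs S_n$ is maximal in $S_n$. Moreover $M$ contains a transposition (either inside the $S_{n-k}$ factor or inside one of the $S_k$ blocks, noting $k\geqs 2$ in the imprimitive case), so $M \not\leqs A_n$ and $H = M \cap A_n$ has index $2$ in $M$.

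Next I would apply the standard correspondence: if $M$ is maximal in $S_n$ and $M \not\leqs A_n$, then $H = M \cap A_n$ is maximal in $A_n$ unless there exists a subgroup $K$ with $H < K < A_n$, in which case $\la M, K\ra = S_n$ forces $\la K, M\cap A_n\ra = A_n$ to be contained in $K$-plus-something, and $K$ must itself be a proper overgroup lying in the LPS list of maximal subgroups of $A_n$. So the task reduces to scanning the LPS enumeration for cases where such an overgroup $K$ of $H$ exists. For intransitive $H$ with $k < n/2$, the orbit lengths $\{k, n-k\}$ of $H$ are distinct, and no maximal subgroup of $A_n$ in any other O'Nan--Scott class can stabilise such an unbalanced partition; hence no $K$ exists and $H$ is maximal. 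For imprimitive $H$ with $1 < k \leqs n/2$, the only exception appearing in the LPS tables is $(G,H) = (A_8, (S_2 \wr S_4)\cap A_8)$: via the exceptional isomorphism $A_8 \cong L_4(2)$, $H$ is the stabiliser of a spread of $2$-dimensional subspaces and embeds in the maximal subgroup $2^3{:}L_3(2) \cong \mathrm{AGL}_3(2)$ of $A_8$.

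The content of the argument is therefore entirely bookkeeping against the LPS classification; the hard work has already been done in \cite{LPS}. The one substantive obstacle is simply being confident that the LPS exception list, when restricted to the intransitive and imprimitive families with the stated parameter ranges, produces only the single $A_8$ exception. I would verify this by going through the explicit list of ``non-maximal natural subgroup'' exceptions in the LPS paper and checking each against our two families of $H$.
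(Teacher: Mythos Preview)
Your proposal is correct and takes essentially the same approach as the paper: both simply invoke the main theorem of \cite{LPS}. The paper's proof is in fact a single sentence citing \cite{LPS}, whereas you have unpacked what that citation entails (maximality in $S_n$, the descent to $A_n$, and scanning the LPS exception tables), which is fine but not strictly necessary.
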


We denote the shape of a permutation $g \in S_n$ by writing $[l_1, \ldots, l_t]$ with $\sum_{i}l_i = n$, where the $l_i$ are the lengths of the disjoint cycles comprising $g$ (in addition, if $g$ has $b_i$ cycles of length $a_i$, where $a_1>a_2 > \cdots > a_k$, then it will be convenient to write $[a_1^{b_1}, \ldots, a_k^{b_k}]$ for the shape of $g$). The next result concerns the containment of certain elements in imprimitive subgroups; the proof is a straightforward application of \cite[Theorem 2.5]{AAC}. 

\begin{lem}\label{lem:Imprimitive}
Let $G = S_n$, with $n \geqs 5$, and let $g \in G$. 
\begin{itemize}\addtolength{\itemsep}{0.2\baselineskip}
\item[{\rm (i)}] If $g$ is an $n$-cycle, then $g$ is not contained in an imprimitive subgroup of $G$ if and only if $n$ is a prime. Moreover, if $n=mk$ with $m,k>1$, then $g$ is contained in a unique subgroup $S_m \wr S_k$.
\item[{\rm (ii)}] If $g$ has shape $[l_1,l_2]$, then $g$ is not contained in an imprimitive subgroup of $G$ if and only if $(l_1,l_2)=1$.
\item[{\rm (iii)}] If $g$ has shape $[l_1,l_2,l_3]$, then $g$ is not contained in an imprimitive subgroup of $G$ if and only if $(l_1,l_2,l_3) = 1$, and if $d$ divides $(l_i,l_j)$ then $\frac{l_i+l_j}{d}$ does not divide $l_k$, where $\{i,j,k\} = \{1,2,3\}$.
\end{itemize}
\end{lem}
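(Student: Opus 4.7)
The plan is to apply the cited result \cite[Theorem 2.5]{AAC}, which I will use in the following structural form: with $g \in S_n$ of cycle type $[l_1, \ldots, l_t]$, the element $g$ lies in a conjugate of $S_m \wr S_{n/m}$ (with $m, n/m > 1$) if and only if the cycle indices $\{1, \ldots, t\}$ admit a partition into subsets $T_1, \ldots, T_p$ such that for each $j$ the quantity $r_j := \big(\sum_{i \in T_j} l_i\big)/m$ is a positive integer dividing $(l_i : i \in T_j)$. Geometrically, each $T_j$ corresponds to a ``super-cycle'' of $r_j$ blocks of size $m$ that are cyclically permuted by $g$, with each cycle $i \in T_j$ contributing $l_i/r_j$ points to every block in its super-cycle. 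With this dictionary in hand, the proof reduces to a case analysis of the possible partitions of the cycle indices.

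For part (i), a single cycle forces the partition to be $\{1\}$, and the constraint that $r = n/m$ divides $n$ holds for every factorisation $n = mk$ with $m, k > 1$. The corresponding $g$-invariant block system is the set of orbits of the unique subgroup of $\langle g \rangle$ of order $m$, so it exists and is uniquely determined; hence there is exactly one containing wreath product $S_m \wr S_k$ for each such factorisation, and $g$ avoids all imprimitive subgroups precisely when $n$ is prime.

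For part (ii), the only available cycle partitions are $\{1,2\}$ and $\{1\}, \{2\}$, requiring respectively that $n/m$ divide $(l_1, l_2)$ or that $m$ divide both $l_1$ and $l_2$; an admissible $m \geqs 2$ therefore exists iff $(l_1, l_2) \geqs 2$. For part (iii) the available partitions of the three cycle indices are $\{1,2,3\}$, the three mixed partitions $\{i,j\} \cup \{k\}$ with $\{i,j,k\} = \{1,2,3\}$, and $\{1\},\{2\},\{3\}$. The first and last both reduce to $(l_1, l_2, l_3) \geqs 2$, while for a mixed partition $\{i,j\} \cup \{k\}$ the constraints become ``$(l_i+l_j)/m$ divides $(l_i, l_j)$ and $m$ divides $l_k$''; setting $d := (l_i+l_j)/m$ this is exactly ``$d \mid (l_i, l_j)$ and $(l_i+l_j)/d \mid l_k$''. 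Negating the disjunction of all these conditions yields the required characterisation.

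The main obstacle I anticipate is verifying the structural equivalence that drives the argument, namely that every imprimitive structure preserved by $g$ decomposes its cycles into super-cycle groups satisfying the stated divisibility constraints, and conversely that each such arithmetic datum is realised by an actual block system. This is the content of \cite[Theorem 2.5]{AAC}, which I will invoke directly; once it is in place, the remaining case analysis for (i)--(iii) is routine.
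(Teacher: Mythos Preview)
Your proposal is correct and follows exactly the route the paper takes: the paper's entire proof is the sentence ``the proof is a straightforward application of \cite[Theorem 2.5]{AAC}'', and you have simply unpacked that application in detail. Your structural reading of the cited theorem (partition the cycle indices into super-cycle groups $T_j$ with $r_j=(\sum_{i\in T_j}l_i)/m$ dividing $(l_i:i\in T_j)$) is the right one, and your case analyses for $t=1,2,3$ are accurate, including the parametrisation of the mixed partition $\{i,j\}\cup\{k\}$ by $d=(l_i+l_j)/m$.
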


The following lemma is a classical result of Marggraf (see \cite[Theorem 13.5]{W}).

\begin{lem}\label{lem:Marggraf}
If a primitive subgroup $H \leqs S_n$ contains a cycle of length $\ell < n/2$, then $H = A_n$ or $S_n$. 
\end{lem}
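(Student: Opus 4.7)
This is Marggraf's classical theorem, and I would prove it by induction on the cycle length $\ell$. The base cases are $\ell \in \{2,3\}$: a primitive subgroup of $S_n$ containing a transposition equals $S_n$, and one containing a $3$-cycle contains $A_n$ (both are classical theorems of Jordan; see, e.g., Wielandt, \emph{Finite Permutation Groups}, Theorem~13.3). So it suffices to show that if $H$ is primitive and contains a cycle of length $\ell$ with $4 \leqs \ell < n/2$, then $H$ contains a cycle of length $\ell' < \ell$, which automatically satisfies $\ell' < n/2$, so that the inductive hypothesis applies.

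Let $c \in H$ be such an $\ell$-cycle, with support $\Sigma \subset \Omega$. Since $H$ is primitive on $\Omega$ with $|\Omega| = n$ and $1 < |\Sigma| = \ell < n$, the set $\Sigma$ is not a block of $H$. Hence there exists $g \in H$ with $\Sigma^g \ne \Sigma$ and $T := \Sigma \cap \Sigma^g \ne \emptyset$, so $1 \leqs |T| \leqs \ell - 1$. Writing $\Sigma' := \Sigma^g$, the conjugate $c^g \in H$ is an $\ell$-cycle supported on $\Sigma'$, and $|\Sigma \cup \Sigma'| = 2\ell - |T| \leqs 2\ell - 1 < n$, using $\ell < n/2$.

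The plan is to manufacture a shorter cycle in $H$ from $c$ and $c^g$. A natural candidate is the commutator $d := [c, c^g] \in H$, whose support is contained in $\Sigma \cup \Sigma'$ and in particular fixes at least two points of $\Omega$. A case analysis on how the cyclic orderings of $c$ on $\Sigma$ and of $c^g$ on $\Sigma'$ interact on the overlap $T$ shows that $d$ is nontrivial, and that its cycle decomposition contains a short cycle; in many configurations $d$ is itself a product of $3$-cycles. If the direct construction does not immediately isolate a single cycle of length less than $\ell$ as an element of $H$, then—following Wielandt's treatment—one either passes to a suitable power of $d$ (exploiting coprimality among the cycle lengths in its decomposition) or replaces $g$ by another element of $H$ so that the new overlap has a more convenient shape, until a single shorter cycle is extracted.

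\emph{Main obstacle.} The high-level idea is clean: primitivity forces overlapping conjugates of $\Sigma$, and two overlapping cycles necessarily interact nontrivially on the boundary of their intersection. The technical core, however, is the combinatorial analysis of the cycle decomposition of $[c, c^g]$ (and its powers) in terms of $|T|$ and the relative positions of $T$ within the cyclic orderings of $c$ and $c^g$, and the verification that one can always extract a \emph{single} cycle of length strictly between $1$ and $\ell$ in $H$ (as opposed to merely an element with small support). This case analysis is the delicate part of the argument, and my plan would follow the approach of Wielandt.
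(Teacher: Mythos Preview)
The paper does not prove this lemma; it simply records it as ``a classical result of Marggraf'' and cites \cite[Theorem~13.5]{W}. Your plan---induct on $\ell$ with Jordan's theorems for $\ell\in\{2,3\}$ as base cases, then use primitivity to produce overlapping conjugate supports and extract a shorter cycle from a commutator---is a reasonable sketch of the classical argument, and you are right that the combinatorial extraction of a single shorter cycle from $[c,c^g]$ (or a power) is where the work lies. Since you explicitly defer to Wielandt for that case analysis, your proposal and the paper are effectively citing the same source; there is no independent proof in the paper to compare against.
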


We will also need the following technical result.

\begin{lem}\label{l:pgam}
Let $G=A_n$, where $n \geqs 7$ is odd. Suppose $n = (q^d-1)/(q-1)$, where $d \geqs 2$ and $q=p^f$ for a prime $p$. Let $g \in G$ be an $n$-cycle and let $N$ be the number of subgroups of $G$ of the form ${\rm P\Gamma L}_d(q) \cap G$ containing $g$. Then
\[
\frac{\varphi(n)}{2df} \leqs N \leqs \frac{\varphi(n)}{d},
\]
where $\varphi$ is Euler's totient function.
\end{lem}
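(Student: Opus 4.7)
The plan is to prove both bounds via a double-counting argument that relates the number of $n$-cycles inside a fixed copy of $K_0 = {\rm P\Gamma L}_d(q) \leqs S_n$ (in its natural action on the $n = (q^d-1)/(q-1)$ points of $\mathbb{P}^{d-1}(\F)$) to the number of $S_n$-conjugates of $K_0$ containing $g$, and then transferring this count from $S_n$ to $G = A_n$.

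First, let $n_0$ denote the number of $n$-cycles in $K_0$. Any $n$-cycle lying in ${\rm PGL}_d(q) \leqs K_0$ must generate a Singer cyclic subgroup $S$ of order $n$, since its $S_n$-centraliser has order $n$; conversely, each of the $\varphi(n)$ generators of $S$ permutes the projective points as an $n$-cycle. Combined with $|N_{{\rm PGL}_d(q)}(S)| = nd$, this gives the count $|{\rm PGL}_d(q)|\varphi(n)/(nd)$ for $n$-cycles in ${\rm PGL}_d(q)$. Accounting for possible $n$-cycles in the outer cosets of ${\rm PGL}_d(q)$ in $K_0$ (which can only exist when $\gcd(n,f) > 1$), a coset-by-coset bound yields
\[
\frac{|{\rm PGL}_d(q)|\varphi(n)}{nd} \leqs n_0 \leqs \frac{f\,|{\rm PGL}_d(q)|\varphi(n)}{nd}.
\]

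Next, since $K_0$ is self-normalising in $S_n$ and the $n$-cycles form a single $S_n$-conjugacy class of size $(n-1)!$, the standard double-counting identity (obtained by counting pairs $(g', K)$ with $K$ an $S_n$-conjugate of $K_0$ and $g' \in K$ an $n$-cycle in two ways) gives
\[
N_S = \frac{n \cdot n_0}{|K_0|} = \frac{n \cdot n_0}{f\,|{\rm PGL}_d(q)|},
\]
where $N_S$ is the number of $S_n$-conjugates of $K_0$ containing $g$; combining with the bounds on $n_0$ yields $\varphi(n)/(df) \leqs N_S \leqs \varphi(n)/d$. To pass from $S_n$ to $G$, I would consider the map $K \mapsto K \cap G$ from the $S_n$-conjugates of $K_0$ containing $g$ onto the set of $G$-subgroups of the form ${\rm P\Gamma L}_d(q) \cap G$ containing $g$. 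The fibres of this map have size at most $2$: a second preimage $K'$ of $H = K \cap G$ must be a distinct index-$2$ overgroup of $H$ in $S_n$ not contained in $G$, and a short analysis of $N_{S_n}(H)/H$ limits this to at most one extra overgroup. Hence $N_S/2 \leqs N \leqs N_S$, and the stated bounds follow.

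I expect the main obstacle to lie in two places: (i) justifying the upper bound on $n_0$ via a coset-by-coset analysis of the cycle structures of elements of the form $h\phi^i$ (with $\phi$ the field automorphism and $h \in {\rm PGL}_d(q)$) acting on projective points, which is where the factor of $f$ in the denominator of the lower bound on $N$ enters; and (ii) pinning down the $2$-to-$1$ bound on the fibres of $K \mapsto K \cap G$, which accounts for the factor of $2$ in that lower bound and relies on a short normaliser calculation in $S_n$.
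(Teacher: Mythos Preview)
Your overall strategy (double counting of $n$-cycles against copies of ${\rm P\Gamma L}_d(q)$) is the same as the paper's, and your lower bound is fine: the Singer subgroups of ${\rm PGL}_d(q)$ already supply $|{\rm PGL}_d(q)|\varphi(n)/(nd)$ $n$-cycles, and this feeds through your double count to give $N \geqs \varphi(n)/(2df)$ once you divide by $|K_0| = f\,|{\rm PGL}_d(q)|$ and account for the $2$-to-$1$ passage from $S_n$ to $A_n$.

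The genuine gap is in your upper bound on $n_0$. You assert a ``coset-by-coset bound'' giving $n_0 \leqs f\,|{\rm PGL}_d(q)|\varphi(n)/(nd)$, i.e.\ that no non-identity coset of ${\rm PGL}_d(q)$ in $K_0$ contains more $n$-cycles than ${\rm PGL}_d(q)$ itself. There is no obvious reason this should hold from cycle-structure considerations alone, and you do not supply one. The paper bypasses this entirely by invoking Jones's theorem \cite[Theorem~1]{J}, which says that every cyclic regular subgroup of ${\rm P\Gamma L}_d(q)$ in this action is a Singer subgroup of ${\rm PGL}_d(q)$; hence every $n$-cycle in $K_0$ already lies in ${\rm PGL}_d(q)$, and $n_0 = |{\rm PGL}_d(q)|\varphi(n)/(nd)$ exactly. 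This is the key external input you are missing, and without it (or an equivalent) your upper bound does not go through.

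Your transfer from $S_n$ to $A_n$ via the fibre bound on $K \mapsto K \cap G$ is correct in spirit, but the paper handles this more cleanly: it observes that $S_n$ has a unique conjugacy class of subgroups ${\rm P\Gamma L}_d(q)$ (the actions on points and hyperplanes being permutation isomorphic), so $G$ has at most two such classes, and one then counts $G$-conjugates of a fixed $H$ containing $g$ by computing the number of $H$-classes in $g^G \cap H$. This avoids any normaliser calculation in $S_n$.
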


\begin{proof}
For $n \in \{7,9\}$, it is easy to check that $N = \varphi(n)/d$, so we may assume $n>9$.
Fix a subgroup $H = {\rm P\Gamma L}_d(q) \cap G$ containing $g$ and let 
$k$ be the number of $G$-conjugates of $H$ containing $g$, so 
\[ k = {\rm fpr}(g,G/H)\cdot |G:H| = \frac{|g^G \cap H|}{|g^G|} \cdot |G:H|.\] 
Since $C_G(g)=C_H(g)=\<g\>$, it follows that $k$ is the number of $H$-classes in $g^G \cap H$. 

By \cite[Theorem 1]{J}, every $n$-cycle in $H$ generates a Singer subgroup of ${\rm PGL}_d(q)$ and thus $H$ contains $\varphi(n)/d$ distinct ${\rm PGL}_d(q)$-classes of $n$-cycles. Since only half of these classes are contained in $g^G$, we get $k \leqs \varphi(n)/2d$. By considering the fusing action of field automorphisms in ${\rm P\Gamma L}_d(q)$ on ${\rm PGL}_d(q)$-classes, we also deduce that $k \geqs \varphi(n)/2df$. Finally, we note that $S_n$ has a unique class of subgroups of the form ${\rm P\Gamma L}_d(q)$ (the corresponding actions of ${\rm P\Gamma L}_d(q)$ on lines and hyperplanes in $\mathbb{F}_q^d$ are permutation isomorphic), so $G$ contains at most two conjugacy classes of subgroups of the form ${\rm P\Gamma L}_d(q) \cap G$. We conclude that $N \leqs 2k \leqs \varphi(n)/d$.
\end{proof}

We are now ready to prove Theorem~\ref{t:AltMax}.

\begin{proof}[Proof of Theorem~\ref{t:AltMax}]
First assume $n$ is even. If $n=6$, then it is easy to check that $\mu(G)=2$; in particular, if $g \in G$ is a $5$-cycle then $\M(G,g) = \{H, K\}$ with $H \cong K \cong A_5$. Now assume $n \geqs 8$. We claim that $\mu(G)=1$. To see this, write $n=2m$, $k=m-(m-1,2)$ and choose an element $g \in G$ with shape $[k,n-k]$. The unique intransitive subgroup in $\M(G,g)$ has the form $(S_k \times S_{n-k}) \cap G$, and imprimitive groups are ruled out by Lemma~\ref{lem:Imprimitive}(ii) since $(k,n-k)=1$. Furthermore, Lemma~\ref{lem:Marggraf} eliminates primitive subgroups since $g^{n-k}$ is a $k$-cycle and $k < n/2$. Therefore, $\M(G,g) = \{H\}$ with $H = (S_k \times S_{n-k}) \cap G$. 

For the remainder, we may assume $n$ is odd. If $n \leqs 23$ then we verify the result computationally in {\sc Magma} (see Section~\ref{sss:comp_prob}). In particular, the value of $\mu(G)$ and the shape of an element $g$ for which $|\M(G,g)|=\mu(G)$ are as follows: 
\[
\begin{array}{lcccccccccc}
\hline
n      &   5 &   7 &       9 &   11 &      13 &       15 &   17 &   19 &       21 &   23 \\
\mu(G) &   1 &   2 &       3 &    2 &       3 &        3 &    2 &    1 &        3 &    2 \\
g      & [5] & [7] & [5,2^2] & [11] & [9,2^2] & [11,2^2] & [17] & [19] & [17,2^2] & [23] \\
\hline
\end{array}
\] 

Now assume $n \geqs 25$ is odd. Our goal is to establish the following five statements:
\begin{itemize}\addtolength{\itemsep}{0.2\baselineskip} 
\item[(1)] $\mu(G) \leqs 3$.
\item[(2)] If $n \in \{r,r^2\}$, where $r$ is a prime and $n \not\in \mathcal{H}$, then $\mu(G) = 1$.
\item[(3)] If $n \in \{rs, r^3\}$, where $r,s$ are distinct primes and $n \not\in \mathcal{H}$, then $\mu(G) \leqs 2$.
\item[(4)] If $\mu(G) \leqs 2$ then $n \not\in \mathcal{H}$ and $n \in \{r,r^2,r^3,rs\}$ for distinct primes $r,s$.
\item[(5)] If $n \in \{rs,r^3\}$, where $r,s$ are distinct primes, then $\mu(G) \geqs 2$.
\end{itemize}
Indeed, observe that (1)--(5) complete the proof of Theorem \ref{t:AltMax}. More precisely, (1) gives the main statement of the theorem, (2) completes the reverse implication in part (i), (3) gives the reverse implication of (ii), and by combining (2), (4) and (5) we obtain the forward implications in parts (i) and (ii).

First consider (1). Let $g \in G$ be an element with the following shape:
\begin{equation}\label{e:s}
\left\{\begin{array}{ll}
\mbox{$[m+2,m,m-2]$}   & \mbox{if $n=3m$} \\
\mbox{$[m+1,m+1,m-1]$} & \mbox{if $n=3m+1$} \\
\mbox{$[m+2,m,m]$}     & \mbox{if $n=3m+2$.} 
\end{array}\right.
\end{equation}
By applying Lemma~\ref{lem:Imprimitive}(iii), we deduce that $g$ does not have any imprimitive maximal overgroups. For example, if $n=3m+1$ then $m$ is even so $(m+1,m-1)=1$. Moreover, $(m+1+m-1)/1=2m$ does not divide $m+1$, and $(m+1+m+1)/(m+1) = 2$ does not divide $m-1$. The other two cases are similar. As before, primitive maximal overgroups can be ruled out via Lemma~\ref{lem:Marggraf} and we conclude that $\mu(G) \leqs 3$ as claimed.

Now consider (2), so $n \in \{r,r^2\}$ and $n \not\in \mathcal{H}$ for a prime $r$. Let $g \in G$ be an $n$-cycle and note that $g$ is not contained in an intransitive subgroup. Suppose $n=r$. By Lemma~\ref{lem:Imprimitive}(i), 
$g$ is not contained in an imprimitive subgroup. Moreover, \cite[Theorem~3]{J} implies that ${\rm AGL}_1(r) \cap G$ is the only primitive maximal overgroup of $g$, so $\mu(G)=1$ as required. Similarly, if $n=r^2$ then by applying \cite[Theorem~3]{J} to rule out primitive groups we deduce that $g$ is contained in a unique imprimitive subgroup $(S_r \wr S_r) \cap G$, so $\mu(G)=1$ once again.

Next consider (3) and let $g$ be an $n$-cycle. As before, by applying \cite[Theorem~3]{J}, we deduce that the maximal overgroups of $g$ are imprimitive. More precisely,
\[
\M(G,g) = \left\{\begin{array}{ll}
\{ (S_r \wr S_s) \cap G, (S_s \wr S_r) \cap G \}         & \mbox{if $n=rs$} \\
\{ (S_r \wr S_{r^2}) \cap G, (S_{r^2} \wr S_r) \cap G \} & \mbox{if $n=r^3$}
\end{array}\right.
\]
and thus $\mu(G) \leqs 2$.

Let us now turn to (4). Suppose that $\mu(G) \leqs 2$ and fix $g \in G$ with  $|\M(G,g)|=\mu(G)$. Since $g$ is even and $n$ is odd, $g$ is not the product of exactly two cycles. If $g$ has at least three cycles, then $\M(G,g)$ contains at least three intransitive subgroups, so $g$ must be an $n$-cycle. If $n$ has at least three distinct prime divisors, or if $n=r^2s$ for distinct primes $r,s$, then Lemma~\ref{lem:Imprimitive}(i) implies that $\M(G,g)$ contains at least three imprimitive subgroups. Therefore, $n \in \{r,r^2,r^3,rs\}$, where $r,s$ are distinct primes, and it remains to prove that $n \not\in \mathcal{H}$. If $n \in \{r^3, rs\}$, then $g$ is already contained in two imprimitive subgroups, so the condition $\mu(G) \leqs 2$ implies that $g$ is not contained in a proper primitive subgroup, whence $n \not\in \mathcal{H}$ by 
\cite[Theorem~3]{J}. The cases $n \in \{r,r^2\}$ require special attention.

First assume $n=r$. Seeking a contradiction, suppose that $n \in \mathcal{H}$. 
By \cite[Theorem~3]{J}, $\M(G,g)$ contains a unique subgroup of the form ${\rm AGL}_1(r) \cap G$ (namely, $N_G(\<g\>)$), together with a collection of subgroups ${\rm P\Gamma L}_d(q)$ for each prime power $q$ and integer $d \geqs 2$ such that $n=(q^d-1)/(q-1)$. More precisely, for each $(q,d)$ with $q=p^f$ and $p$ a prime, Lemma~\ref{l:pgam} implies that $g$ is contained in at least $(n-1)/2df$ such subgroups. Since $n \geqs 25$, one can check that this gives $|\M(G,g)|>2$ and we have reached a contradiction. A similar argument applies if $n=r^2 \in \mathcal{H}$. Indeed, $\M(G,g)$ contains a unique subgroup of the form $(S_r \wr S_r) \cap G$ and at least two primitive subgroups ${\rm P\Gamma L}_d(q) \cap G$. Once again, this is a contradiction and the proof of (4) is complete. 

Finally, let us consider (5), so $n \in \{rs,r^3\}$ for distinct primes $r$ and $s$. Fix an element $1 \ne g \in G$. If $g$ is not an $n$-cycle then the argument in (4) shows that $g$ is contained in at least three intransitive maximal subgroups. On the other hand, if $g$ is an $n$-cycle then the proof of (3) implies that $g$ is contained in at least two imprimitive subgroups. Therefore, $|\M(G,g)| \geqs 2$ and thus $\mu(G) \geqs 2$. 
\end{proof}

\subsection{Uniform domination number}\label{ss:alt_udn}

We now apply Theorem~\ref{t:AltMax} to study the uniform domination number of alternating groups. Our main result is the following.

\begin{thm}\label{t:mainn}
We have $\gamma_u(A_n) \leqs 77\log_2n$ for all $n \geqs 5$. More precisely, the following hold:
\begin{itemize}\addtolength{\itemsep}{0.2\baselineskip}
\item[{\rm (i)}] If $n \geqs 13$ is a prime, then $\gamma_u(A_n) = 2$.
\item[{\rm (ii)}] If $n \geqs 6$ is even, then $\lceil\log_2 n \rceil - 1 \leqs \gamma_u(A_n) \leqs 2\lceil\log_2 n \rceil$.
\end{itemize}
\end{thm}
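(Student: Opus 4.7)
The plan is to leverage the tight control on maximal overgroups provided by Theorem~\ref{t:AltMax}, combined with the base size reductions in Corollaries~\ref{c:CriterionBase} and~\ref{c:LowerBound}, the probabilistic estimate of Lemma~\ref{l:ProbMethod}, and Halasi's bounds on the base size of $A_n$ acting on $k$-subsets~\cite{Hal}.

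For part~(i), take $s \in A_n$ to be an $n$-cycle; the lower bound $\gamma_u(A_n) \geqs 2$ is immediate from~\eqref{e:tr}. By the proof of Theorem~\ref{t:AltMax}, the set $\M(A_n, s)$ consists of $H_0 = {\rm AGL}_1(n) \cap A_n$ together, when $n \in \mathcal{H}$, with subgroups of the form ${\rm P\Gamma L}_d(q) \cap A_n$ arising from each factorisation $n = (q^d - 1)/(q - 1)$, whose number is controlled by Lemma~\ref{l:pgam}. I then apply Lemma~\ref{l:ProbMethod} with $c = 2$: since $|A_n : H_0| = (n-2)!$ is enormous while $|H_0| = n(n-1)/2$ is small, the contribution of $H_0$ to $\widehat Q(A_n, s, 2)$ is tiny (for instance, the translation class $s^{A_n}$ alone contributes only $2(n-1)/(n-2)!$), and when $n \in \mathcal{H}$ the extra primitive overgroups contribute further small terms which are handled using fixed point ratio bounds for primitive actions from~\cite{LSax, Bur}. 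A class-by-class summation yields $\widehat Q(A_n, s, 2) < 1$, giving $\gamma_u(A_n) = 2$.

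For part~(ii), write $n = 2m$ and take $k = m - (m-1, 2)$, so that $(k, n-k) = 1$ and both $k$ and $n-k$ are odd; let $s$ have cycle type $[k, n-k]$. By Theorem~\ref{t:AltMax}, $\M(A_n, s) = \{H\}$ with $H = (S_k \times S_{n-k}) \cap A_n$, and the coset action of $A_n$ on $A_n/H$ is permutation isomorphic to the natural action on $k$-subsets of $\{1, \ldots, n\}$. For the upper bound, Corollary~\ref{c:CriterionBase} gives $\gamma_u(A_n) \leqs b(A_n, A_n/H)$, and Halasi's theorem~\cite{Hal} applied to $k$ within $2$ of $n/2$ yields $b(A_n, A_n/H) \leqs 2 \lceil \log_2 n \rceil$. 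For the lower bound, by Remark~\ref{r:LowerBound} it suffices to show, for every $t \in A_n^{\#}$, that some $H' \in \M(A_n, t)$ satisfies $b(A_n, A_n/H') \geqs \lceil \log_2 n \rceil - 1$. If $t$ fixes a point then $A_{n-1} \in \M(A_n, t)$, with base size $n - 2 \geqs \lceil \log_2 n \rceil - 1$ for $n \geqs 6$. If $t$ is a derangement with cycle lengths $c_1, \ldots, c_r$ (so $r \geqs 2$), then $H' = (S_{c_i} \times S_{n - c_i}) \cap A_n$ lies in $\M(A_n, t)$ by Lemma~\ref{l:lps}, and Halasi's lower bound (combined with~\eqref{e:lb}) furnishes the required estimate.

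The general bound $\gamma_u(A_n) \leqs 77 \log_2 n$ then follows by combining (i), (ii) and Proposition~\ref{t:altodd} for odd non-prime $n$, together with direct computation for the handful of small cases. The principal obstacle is the lower bound in part~(ii): obtaining the uniform constant $-1$ requires establishing $b(A_n, A_n/H') \geqs \lceil \log_2 n \rceil - 1$ for every $k$-subset action with $k \in \{2, \ldots, n-2\}$, not only $k \approx n/2$, and so some care is needed in applying Halasi's explicit formulas (with minor adjustments to pass from $S_n$ to $A_n$) across the full range of derangement cycle types.
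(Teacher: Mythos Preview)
Your overall architecture matches the paper's: part (i) via the probabilistic method with an $n$-cycle, part (ii) via Halasi's bounds and Corollaries~\ref{c:CriterionBase}/\ref{c:LowerBound}, and the general bound via Proposition~\ref{t:altodd}. But there is a genuine gap in your treatment of part~(i), and part~(ii) is overcomplicated.

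\textbf{Part (i).} First, you miss a clean shortcut: when $n \notin \mathcal{H}$ (and $n \neq 23$), Theorem~\ref{t:AltMax} gives $\M(A_n,s) = \{H_0\}$ with $H_0 = {\rm AGL}_1(n) \cap A_n$, and then $b(A_n, A_n/H_0) = 2$ by \cite[Corollary~1.5]{BGS}, so Corollary~\ref{c:CriterionBase} gives $\gamma_u(A_n)=2$ with no probabilistic estimate needed. More seriously, for $n \in \mathcal{H}$ your appeal to \cite{LSax, Bur} does not work: those give fixed point ratio bounds for primitive actions of \emph{groups of Lie type}, not for $A_n$ acting on cosets of ${\rm AGL}_1(n)$ or ${\rm P\Gamma L}_d(q)$. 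The paper's argument is different: one shows that every $H \in \M(A_n,s)$ is primitive on $\{1,\ldots,n\}$, so Mar\'oti's bound \cite{Mar} gives $|H| < n^{1+\log_2 n}$, while the Guralnick--Magaard minimal degree theorem \cite{GMag} forces any prime-order $x \in H$ to move at least $n/2$ points, yielding $|x^{A_n}| > C^4$ where $C = \max_{H \in \M(A_n,s)} |H|$. Feeding $A_j = C$, $B = C^4$ into Lemma~\ref{l:bd} then gives $\widehat{Q}(A_n,s,2) < (\ell/C)^2 < 1$, where $\ell$ bounds $|\M(A_n,s)|$ via Lemma~\ref{l:pgam}. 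Your ``class-by-class summation'' does not indicate how to control $|x^{A_n}|$ from below, and without that the argument does not close.

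\textbf{Part (ii).} The lower bound is simpler than you make it. Since $n$ is even, an $n$-cycle is an odd permutation, so $A_n$ contains no $n$-cycles; hence \emph{every} $1 \ne t \in A_n$ stabilises some proper $k$-subset with $1 \leqs k \leqs n/2$, and Halasi's lower bound $b(S_n,\Omega_k) \geqs \lceil \log_2 n \rceil$ is already uniform in $k$ (Lemma~\ref{thm:Halasi}). No case split into fixed-point versus derangement is needed, and your worry about ``the full range of derangement cycle types'' is unfounded. Incidentally, your derangement case has a small hole: if $t$ has shape $[n/2,n/2]$ then the only invariant proper subsets have size $n/2$, and $(S_{n/2} \times S_{n/2}) \cap A_n$ is \emph{not} maximal (it sits inside $(S_{n/2} \wr S_2) \cap A_n$), so Lemma~\ref{l:lps} does not apply as you claim. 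This is harmless, since the proof of Corollary~\ref{c:LowerBound} goes through verbatim for any proper subgroup $H \ni s$, not just maximal ones.
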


We partition the proof of Theorem~\ref{t:mainn} into three propositions which are proved in the following three sections.

\subsubsection{Prime degree}
We start by considering alternating groups of prime degree.

\begin{prop}\label{t:main_alt}
Let $r \geqs 13$ be a prime number. Then $\gamma_u(A_r)=2$.
\end{prop}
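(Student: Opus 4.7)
The lower bound $\gamma_u(A_r) \geq 2$ is immediate since $A_r$ is non-cyclic, so the plan is to establish $\gamma_u(A_r) \leq 2$ by fixing an $r$-cycle $g \in G = A_r$ and exhibiting a conjugate $g^h$ such that $\{g, g^h\}$ is a total dominating set for $\Gamma(G)$. By Lemma~\ref{l:Criterion}, this is equivalent to finding $h \in G$ satisfying $H_1 \cap H_2^h = 1$ for all $H_1, H_2 \in \M(G, g)$.

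The first step is to use the proof of Theorem~\ref{t:AltMax}, which in turn rests on \cite[Theorem~3]{J}, to describe $\M(G, g)$ explicitly: the only intransitive or imprimitive member is $H_0 = N_G(\langle g \rangle) = \mathrm{AGL}_1(r) \cap G$ of order $r(r-1)/2$, and the primitive members are subgroups of the form $\mathrm{P\Gamma L}_d(q) \cap G$ arising from the decompositions $r = (q^d - 1)/(q-1)$ with $d \geq 2$ and $q$ a prime power. By Lemma~\ref{l:pgam}, at most $\varphi(r)/d$ conjugates of each such primitive subgroup contain $g$. Hence $\M(G, g) = \{H_0\}$ when $r \notin \mathcal{H}$, and $\M(G, g) = \{H_0, H_1, \ldots, H_t\}$ with a bounded number $t$ of primitive overgroups otherwise.

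For the case $r \notin \mathcal{H}$, Corollary~\ref{c:CriterionBase} reduces the task to verifying the base-size inequality $b(G, G/H_0) \leq 2$. I would do this via the Liebeck--Shalev probabilistic method, showing $\sum_i |x_i^G \cap H_0|^2 / |x_i^G| < 1$ as $x_i$ ranges over prime order class representatives. The crucial simplification is that a non-identity element of $\mathrm{AGL}_1(r)$ has at most one fixed point on $\mathbb{F}_r$, so the intersection $x^G \cap H_0$ is empty unless $x$ has cycle type $[r]$ or $[p^{(r-1)/p}, 1^1]$ for some prime $p \mid r - 1$; in each of these at most $O(\log r)$ cases, the intersection has size at most $r(r-1)$, which is negligible compared with $|x^G|$ for $r \geq 13$. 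For $r \in \mathcal{H}$, I would apply Lemma~\ref{l:ProbMethod} with $c = 2$: the contribution from $H_0$ is controlled exactly as above, while the contribution from each primitive overgroup $H_i$ is bounded class by class, using that $|H_i|$ is polynomially bounded in $r$ (since $q \leq r^{1/(d-1)}$) and that $\fpr(x, G/H_i)$ can be estimated via the subspace action bounds of \cite[Section~3]{GK}, or by the Liebeck--Saxl bound applied to the degree-$r$ action of $\mathrm{P\Gamma L}_d(q)$.

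The main obstacle will be the uniformity of this estimate over all primes $r \geq 13$, particularly those in $\mathcal{H}$ for which multiple primitive overgroups of $g$ exist. Although $\widehat{Q}(G, g, 2) \to 0$ follows quickly from crude estimates, a clean argument covering every such prime requires careful accounting of the fixed point ratios for each contributing class; the handful of small primes in $\mathcal{H} \cap \{13, 17, 31, 127, \ldots\}$ below the explicit asymptotic threshold are best handled computationally via the methods of Section~\ref{ss:comp}.
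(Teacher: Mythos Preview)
Your strategy matches the paper's: take an $r$-cycle $s$, show $\widehat{Q}(G,s,2)<1$, split on whether $r\in\mathcal{H}$, and handle small primes computationally. There is one genuine oversight and one point where your execution diverges from the paper's.

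The oversight is that your claim $\M(G,g)=\{H_0\}$ for all primes $r\notin\mathcal{H}$ fails at $r=23$: by \cite[Theorem~3]{J} a $23$-cycle in $A_{23}$ also lies in a copy of ${\rm M}_{23}$, so Corollary~\ref{c:CriterionBase} with $H_0$ alone is not available there. The paper treats $r=23$ by a direct computation; you should do the same. (For the remaining $r\notin\mathcal{H}$, rather than reproving the base-size bound the paper simply cites \cite[Corollary~1.5]{BGS} for $b(G,G/H_0)=2$.)

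For large $r\in\mathcal{H}$ your proposed tools are imprecise. The bounds of \cite[Section~3]{GK} concern subspace actions of classical groups, not actions of $A_r$ on cosets of $H_i$; and the Liebeck--Saxl bound for the degree-$r$ action of $\mathrm{P\Gamma L}_d(q)$ controls fixed points of elements of $H_i$ on the $r$ points (i.e.\ a minimal-degree statement), not $\fpr(x,G/H_i)$ directly. The paper's argument avoids any class-by-class analysis: it uses only the trivial inequality $|x^G\cap H_j|\leqs C\coloneqq\max_j|H_j|$, together with the uniform lower bound $|x^G|>C^4$ for every prime-order $x$ meeting some $H_j$ (Lemma~\ref{l:bd2}). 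That lower bound is the crux and rests on two inputs you do not mention: Mar\'{o}ti's theorem $|H_j|<r^{1+\log_2 r}$, and the Guralnick--Magaard minimal-degree theorem, which forces any prime-order element of a primitive subgroup of $A_r$ to move at least $r/2$ points. Feeding $A_j=C$ and $B=C^4$ into Lemma~\ref{l:bd} then yields $\widehat{Q}(G,s,2)<(\ell/C)^2<1$ for all $r\geqs 73$, so only $r\in\{13,17,31\}$ need machine verification rather than the longer list you anticipate.
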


We need some preliminary lemmas. Recall the definition of the set of integers $\mathcal{H}$ in \eqref{e:H}. Fix a prime number $r \geqs 5$ and set
\[
\mathcal{H}_r = \left\{ (q,d) \,:\, \mbox{$r =\frac{q^d-1}{q-1}$, $q$ a prime power, $d \geqs 2$} \right\}.
\]

\begin{lem}\label{l:hp}
$|\mathcal{H}_r| < \log_2r$.
\end{lem}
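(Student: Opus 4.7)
The plan is to observe that $\mathcal{H}_r$ is controlled by the possible values of the exponent $d$, so the task reduces to bounding how many $d$'s can occur. First I would note that for each fixed $d \geqs 2$, the map $q \mapsto (q^d-1)/(q-1) = 1 + q + q^2 + \cdots + q^{d-1}$ is strictly increasing on integers $q \geqs 2$; hence the equation $(q^d-1)/(q-1)=r$ admits at most one solution $q$, prime power or otherwise. In particular, the projection $\mathcal{H}_r \to \{d : d \geqs 2\}$ sending $(q,d)$ to $d$ is injective, so
\[
|\mathcal{H}_r| \leqs \#\{d \geqs 2 : \exists\, q \geqs 2 \text{ with } 1 + q + \cdots + q^{d-1} = r\}.
\]

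Next, I would extract a size constraint on $d$ from the equation itself. Since any prime power $q$ satisfies $q \geqs 2$, we have
\[
r \;=\; 1 + q + \cdots + q^{d-1} \;\geqs\; 1 + q^{d-1} \;\geqs\; 1 + 2^{d-1},
\]
so $2^{d-1} \leqs r - 1$, which rearranges to $d \leqs 1 + \log_2(r-1)$. Hence the valid exponents lie in $\{2,3,\ldots,1+\lfloor \log_2(r-1)\rfloor\}$, giving
\[
|\mathcal{H}_r| \;\leqs\; \lfloor \log_2(r-1)\rfloor.
\]

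Finally, to conclude strict inequality, I would simply note that $\lfloor \log_2(r-1)\rfloor \leqs \log_2(r-1) < \log_2 r$ for any prime $r \geqs 5$ (indeed for any $r \geqs 2$). Combining the two inequalities gives $|\mathcal{H}_r| < \log_2 r$, as required.

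This is essentially a one-paragraph counting argument, and I do not anticipate any genuine obstacle. The only mild subtlety is ensuring the inequality is strict rather than $\leqs$, which is why I would pass through $r-1$ rather than $r$ in the size bound on $d$; using $1 + 2^{d-1} \leqs r$ (with the extra ``$+1$'') is what buys the strict inequality after taking logarithms.
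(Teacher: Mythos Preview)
Your proof is correct. It differs from the paper's argument in a small but pleasant way. The paper first observes that since $r$ is prime, the exponent $d$ must itself be prime (otherwise $(q^d-1)/(q-1)$ would factor nontrivially), and then bounds $|\mathcal{H}_r|$ by the number of primes up to roughly $\log_2 r$. You instead bound the number of \emph{all} integers $d$ in the relevant range directly, via $r \geqs 1 + 2^{d-1}$, obtaining $|\mathcal{H}_r| \leqs \lfloor \log_2(r-1)\rfloor < \log_2 r$. Your route is more elementary (no prime-counting step, no use of the primality of $r$) and in fact proves the inequality for every integer $r \geqs 2$; the paper's route exploits the extra arithmetic structure but gains nothing from it here. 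Both hinge on the same injectivity observation $(q,d) \mapsto d$, which you justify cleanly via monotonicity of $q \mapsto 1 + q + \cdots + q^{d-1}$.
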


\begin{proof}
Suppose $(q,d) \in \mathcal{H}_r$. Then $d$ is a prime number and it is easy to see that $(s,d) \in \mathcal{H}_r$ if and only if $s=q$. Indeed, if $q<s$ then 
\[
\frac{q^d-1}{q-1} = \frac{s^d-1}{s-1} \geqs \frac{(q+1)^d-1}{q},
\]
which is absurd. Therefore, we just need to count the possibilities for $d$.  Since $r>q^{d-1}$ we have
\[
d \leqs \left\lfloor \frac{\log r}{\log q}\right\rfloor +1=:D
\]
and the result follows since there are fewer than $\log_2r$ primes at most $D$.
\end{proof}

As an immediate corollary, it follows that if $r \geqs 5$ is a prime in $\mathcal{H}$ then
\begin{equation}\label{e:ell}
\ell\coloneqq1+\sum_{(q,d) \in \mathcal{H}_r}\frac{r-1}{d} < r\log_2r.
\end{equation}

Set $G = A_r$, where $r \geqs 73$ is a prime in $\mathcal{H}$. Fix an $r$-cycle $s \in G$. As observed in the proof of Theorem~\ref{t:AltMax}, $\mathcal{M}(G,s)$ comprises a single copy of ${\rm AGL}_{1}(r) \cap G$, together with at most $(r-1)/d$ copies of ${\rm P\Gamma L}_{d}(q) \cap G$ for each $(q,d) \in \mathcal{H}_r$. In particular, $|\mathcal{M}(G,s)| \leqs \ell$, where $\ell$ is given in \eqref{e:ell}. 

\begin{lem}\label{l:bd0}
If $r \geqs 73$ is a prime in $\mathcal{H}$, then $|H| \geqs r(r-1)/2$ for all $H \in \mathcal{M}(G,s)$.
\end{lem}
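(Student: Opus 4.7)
The plan is to treat the subgroups in $\mathcal{M}(G,s)$ according to whether they are affine or projective, as described in the text immediately before the lemma. The affine case is a direct computation, and the projective case reduces to a crude order comparison using the simple subgroup ${\rm PSL}_d(q)$.

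For the affine subgroup $N_G(\langle s\rangle) = {\rm AGL}_1(r) \cap G$, the bound will hold with equality. Indeed, $|{\rm AGL}_1(r)| = r(r-1)$, and ${\rm AGL}_1(r)$ contains the map $x \mapsto ax$ with $a$ a primitive root modulo $r$; this permutation has cycle type $[1,r-1]$ and so is odd, since $r$ is odd. Hence $[{\rm AGL}_1(r) : {\rm AGL}_1(r) \cap A_r] = 2$ and $|N_G(\langle s\rangle)| = r(r-1)/2$ exactly.

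For a projective overgroup $H = {\rm P\Gamma L}_d(q) \cap G$ with $(q,d) \in \mathcal{H}_r$, the plan is to reduce to the stronger inequality $|{\rm PSL}_d(q)| \geqs r(r-1)/2$. The hypothesis $r \geqs 73$ rules out the degenerate pairs $(d,q) \in \{(2,2),(2,3)\}$, so ${\rm PSL}_d(q)$ is simple; its natural action on the $r$ one-dimensional subspaces of $\F^d$ then embeds it into $A_r = G$, so ${\rm PSL}_d(q) \leqs H$ and it suffices to prove the displayed inequality. Writing
\[
r(r-1)/2 = \frac{q(q^d-1)(q^{d-1}-1)}{2(q-1)^2}
\]
and unpacking the standard formula for $|{\rm PSL}_d(q)|$, the inequality reduces after cancellation to
\[
\frac{2(q-1)}{(2,q-1)} \geqs 1
\]
when $d = 2$, and to
\[
\frac{2(q-1)^2 q^{d(d-1)/2 - 1}}{(d,q-1)} \prod_{i=2}^{d-2}(q^i - 1) \geqs 1
\]
when $d \geqs 3$ (with the convention that an empty product is $1$). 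Both inequalities are routine for all $q \geqs 2$, with a substantial amount of room.

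The only real obstacle is the bookkeeping in the projective case: one has to be slightly careful with the factor $(d,q-1)$ arising from the centre of ${\rm SL}_d(q)$ and with the separation of the $d=2$ and $d \geqs 3$ cases (because for $d=2$ the product $\prod_{i=2}^d(q^i-1)$ has only one factor, leaving an extra $(q-1)$ uncancelled). None of this is difficult, and once the order formula is written out the desired bound is immediate.
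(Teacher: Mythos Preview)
Your proof is correct and follows essentially the same two-case split as the paper: handle ${\rm AGL}_1(r)\cap G$ directly, then bound the order of ${\rm P\Gamma L}_d(q)\cap G$. The only difference is in the projective case, where the paper gives a quicker crude estimate $|H|>\tfrac14 q^{d^2-1}>\tfrac14(r/2)^{d+1}\geqs r^3/32$ using $r<2q^{d-1}$, whereas you pass to the simple subgroup ${\rm PSL}_d(q)\leqs H$ and compare orders exactly; both arrive at the same conclusion with room to spare.
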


\begin{proof}
First observe that $|{\rm AGL}_{1}(r) \cap G| \geqs r(r-1)/2$. Fix $(q,d) \in \mathcal{H}_r$ and note that $r<2q^{d-1}$. Then
\[
|{\rm P\Gamma L}_{d}(q) \cap G| > \frac{1}{4}q^{d^2-1} >\frac{1}{4}\left(\frac{r}{2}\right)^{d+1} \geqs \frac{1}{32}r^3
\]
and the result follows.
\end{proof}

Define 
\begin{equation}\label{e:Cdef}
C = \max\{|H| \,:\, H \in \mathcal{M}(G,s)\}.
\end{equation}

\begin{lem}\label{l:bd2}
Suppose $r \geqs 73$ is a prime in $\mathcal{H}$, $H \in \mathcal{M}(G,s)$ and $x \in H$ has prime order. Then $|x^G|> C^4$.
\end{lem}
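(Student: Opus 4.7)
My plan is to bound $C$ from above and $|x^G|$ from below, then verify $|x^G| > C^4$ using Stirling-type estimates. By the proof of Theorem~\ref{t:AltMax}, every $H \in \mathcal{M}(G,s)$ is either $\mathrm{AGL}_1(r) \cap G$ (of order $r(r-1)/2$) or a subgroup $\mathrm{P\Gamma L}_d(q) \cap G$ for some $(q,d) \in \mathcal{H}_r$. For the bound on $C$, the identity $r(q-1) = q^d-1$ gives $q^d \leqs qr$, hence $q^{d^2} = (q^d)^d \leqs qr^{d+1}$. Since $q = p^f$ with $p^{f(d-1)} \leqs r$ we have $f \leqs \log_2 r$, so
\[
|\mathrm{P\Gamma L}_d(q)| \leqs \frac{fq^{d^2}}{q-1} \leqs 2fr^{d+1} \leqs 2r^{d+1}\log_2 r,
\]
and hence $C \leqs 2r^{d+1}\log_2 r$.

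Next, I would analyse prime-order $x \in H$ by cycle type. If $x$ has order $r$, then $x$ is an $r$-cycle and $|x^G| = (r-1)!/2$; by Stirling this easily beats $C^4 \leqs 16 r^{4(d+1)}(\log_2 r)^4$ (note $d \leqs \log_2 r + 1$, so $C^4 \leqs r^{O(\log r)}$, while $(r-1)!/2 > (r/e)^{r-1}$). Otherwise $x$ has prime order $p<r$ with cycle shape $[p^k, 1^{r-pk}]$, so $|x^G| \geqs r!/(2p^k k!(r-pk)!)$, and it remains to bound $k$ from below. For $H = \mathrm{AGL}_1(r) \cap G$, such an $x$ fixes only the origin of the affine line, giving $k = (r-1)/p$. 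For $H = \mathrm{P\Gamma L}_d(q) \cap G$ acting on projective points, I would invoke the fixed point ratio bound $\fpr(x,H/M) \leqs 4/(3q)$ for $M$ a line stabiliser (from \cite[Theorem~1]{LSax}), which gives $pk \geqs r(1-4/(3q)) \geqs r/3$ and hence $k \geqs r/(3p)$.

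With $pk \geqs r/3$ in hand, a Stirling estimate shows that $r!/(p^k k!(r-pk)!)$ grows super-polynomially in $r$, and therefore dominates $C^4 \leqs 16 r^{4(d+1)}(\log_2 r)^4$ for $r$ sufficiently large. The threshold $r \geqs 73$ can then be confirmed by a finite numerical check: for instance, when $r = 73$ we have $\mathcal{H}_r = \{(8,3)\}$, giving $C \leqs |\mathrm{P\Gamma L}_3(8)| < 5 \times 10^7$ and $C^4 < 6 \times 10^{30}$, while any relevant prime-order element has $|x^G| > 5 \times 10^{53}$.

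The hard part will be obtaining the uniform lower bound on the number of moved points of a prime-order element of $\mathrm{P\Gamma L}_d(q) \cap G$, since the analysis varies according to whether $x$ is semisimple, unipotent, or involves a field automorphism. Eigenspace considerations yield the stronger bound $q^{d-1} - 1$ for elements of $\mathrm{PGL}_d(q)$, but field automorphisms require separate attention; invoking the Liebeck--Saxl bound sidesteps this case distinction and produces the uniform bound $pk \geqs r/3$, which is amply sufficient for the Stirling calculation to go through.
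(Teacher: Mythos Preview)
Your overall strategy coincides with the paper's: bound $C$ above by roughly $r^{O(\log r)}$, bound the support of any prime-order element of $H$ below by a constant fraction of $r$, then compare via Stirling and finish small cases numerically. The paper streamlines both steps by citing general results on primitive groups rather than computing with $\mathrm{P\Gamma L}_d(q)$ directly: it uses Mar\'oti's theorem \cite{Mar} to get $|H| < r^{1+\log_2 r}$, and Guralnick--Magaard \cite{GMag} to get minimal degree $\geqs r/2$ for any proper primitive subgroup of $S_r$. This yields
\[
|x^G| \geqs \frac{r!}{2^{r/4}\lceil r/4\rceil!\,\lceil r/2\rceil!} > r^{r/4},
\]
reducing $|x^G| > C^4$ to $r \geqs 16(1+\log_2 r)$, valid for $r>127$; the primes $r\in\{73,127\}$ in $\mathcal{H}$ are then checked directly.

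There is, however, a genuine gap in your support bound. The Liebeck--Saxl inequality $\fpr(x,H/M)\leqs 4/(3q)$ does \emph{not} sidestep the case analysis you are trying to avoid: the natural action of $\mathrm{P\Gamma L}_d(q)$ on projective points is precisely the sort of subspace action for which that bound can fail. Already for $d=2$ (so $r=q+1$ with $q$ even, $q\geqs 72$), a split semisimple element of $\mathrm{PSL}_2(q)$ fixes two points and has $\fpr = 2/(q+1) > 4/(3q)$, and an involutory field automorphism (when $q$ is a square) has $\fpr \approx q^{-1/2}$. In both cases your conclusion $pk \geqs r/3$ still holds, but not via the route you propose. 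The right reference here is \cite{GMag}: it applies uniformly to every non-identity element of any primitive $H<S_r$, gives the stronger bound $pk\geqs r/2$, and eliminates the case split on $|x|=r$ as well.
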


\begin{proof}
We use some of the ideas in the proof of \cite[Theorem~1.1]{BGS}. Fix $H \in \mathcal{M}(G,s)$ containing $x$ and observe that $H$ is primitive, so a theorem of Mar\'{o}ti \cite{Mar} gives
\[
|H| < r^{1+\log_2r}.
\]
By the main theorem of \cite{GMag}, the minimal degree of $H$ is at least $r/2$, which means that 
\[
|x^G| \geqs \frac{r!}{2^{r/4}\lceil r/4\rceil!\lceil r/2\rceil!}=:f(r).
\]
Using the bounds
\[
\sqrt{2\pi} \cdot n^{1/2}\left(\frac{n}{e}\right)^n < n! < e\cdot n^{1/2}\left(\frac{n}{e}\right)^n,
\]
which are valid for all positive integers $n$, we get
\begin{align*}
f(r) & > \frac{\sqrt{2\pi} \cdot r^{r+1/2}e^{-r}}{2^{r/4} \cdot e\left(\frac{r+3}{4}\right)^{(r+3)/4+1/2}e^{-(r+3)/4} \cdot e\left(\frac{r+1}{2}\right)^{(r+1)/2+1/2}e^{-(r+1)/2}} \\
     & > 2^{r/4}\left(\frac{r^{r+1/2}}{(r+3)^{r/4+5/4}(r+1)^{r/2+1}}\right) \\
     & > r^{r/4} 
\end{align*}
and thus it suffices to show that $r \geqs 16(1+\log_2r)$. One checks that this inequality holds if $r>127$, so it just remains to handle the primes $r \in \{73,127\}$ (recall that $r \in \mathcal{H}$). If $r=73$ then $C = |{\rm P\Gamma L}_{3}(8)|$ and similarly $C = |{\rm P\Gamma L}_{7}(2)|$ if $r=127$; in both cases, the desired bound is easily checked using {\sc Magma}.
\end{proof}

We are now in a position to prove Proposition~\ref{t:main_alt}.

\begin{proof}[Proof of Proposition~\ref{t:main_alt}]
Set $G = A_r$, where $r \geqs 13$ is a prime number, and let $s \in G$ be an $r$-cycle. First assume $r \not\in \mathcal{H}$. If $r=23$ then a straightforward computation (see Section~\ref{sss:comp_prob}) shows that $\gamma_u(G)=2$. Now assume $r \ne 23$, in which case the proof of Theorem~\ref{t:AltMax} implies that $\M(G,s) = \{ H \}$ with $H = {\rm AGL}_1(r) \cap G$. Now $b(G,G/H) = 2$ by \cite[Corollary~1.5]{BGS}, so Corollary~\ref{c:CriterionBase} implies that $\gamma_u(G) = 2$.

To complete the proof, we may assume $r \in \mathcal{H}$. To handle the case $r=13$, we use random search in {\sc Magma} to show that $\gamma_u(G)=2$ (we refer the reader to \cite[Section~1.2.4]{BH_comp} for further details of the computation). For example, one can check that 
\[
\{ (1,2,3,4,5,6,7,8,9,10,11,12,13), (1,2,3,4,5,6,8,9,12,7,11,10,13) \}
\]
is a TDS for $\Gamma(G)$. For $r \in \{17,31\}$ we can use Lemma~\ref{l:ProbMethod} to show that $\gamma_u(G)=2$. 

Now assume $r>31$, which means that $r \geqs 73$ (see \cite[Table~II]{BS}). Set $\mathcal{M}(G,s) = \{H_1, \ldots, H_{k}\}$ and let $x_1, \ldots, x_m$ be representatives of the $G$-classes of elements of prime order which meet at least one of the subgroups in $\mathcal{M}(G,s)$. Note that $k \leqs \ell$, where $\ell$ is defined in \eqref{e:ell}. By Lemma~\ref{l:ProbMethod}, we need to show that 
\[
\sum_{i=1}^{m}|x_i^G|\left(\sum_{j=1}^{k}{\rm fpr}(x_i,G/H_j)\right)^2 < 1. 
\]
To do this, we apply Lemma~\ref{l:bd} with $A_j = C$ and $B=C^4$ for all $j$, where $C$ is defined in \eqref{e:Cdef}, noting that the value of $B$ is justified by Lemma~\ref{l:bd2}. This yields
\[
\sum_{i=1}^{m}|x_i^G|\left(\sum_{j=1}^{k}{\rm fpr}(x_i,G/H_j)\right)^2 < \left(\frac{\ell}{C}\right)^2.
\]
Finally, we recall that $\ell < r\log_2r$ and $|C| \geqs r(r-1)/2$ (see \eqref{e:ell} and Lemma~\ref{l:bd0}), whence $\ell<C$ and the proof is complete.
\end{proof}

\begin{rem}\label{r:all}
We can use computational methods (see Section~\ref{ss:comp}) to compute $\gamma_u(G)$ when $G=A_r$ and $r \in \{5,7,11\}$ is one of the primes excluded in Proposition~\ref{t:main_alt}. We get the following results:
\[
\gamma_u(A_r) = \left\{ \begin{array}{ll}
3 & \mbox{if $r=5,11$,} \\
4 & \mbox{if $r=7$.}
\end{array}\right.
\]
For example, suppose $r=11$. By applying Lemma~\ref{l:ProbMethod}, with $s \in G$ an $11$-cycle, it is easy to see that $\gamma_u(G) \leqs 3$. To show that $\gamma_u(G) \geqs 3$ we employ the method described in Remark~\ref{r:LowerBound}. Fix an element $g \in G$. If $g$ is not an $11$-cycle then $g$ is contained in an intransitive subgroup $H$ with $b(G,G/H) \geqs 3$ (see Lemma~\ref{thm:Halasi} below). On the other hand, if $g$ is an $11$-cycle then $g$ is contained in a subgroup $H={\rm M}_{11}$ (see the proof of Theorem~\ref{t:AltMax}) and by \cite[Theorem~1]{BGS} we have $b(G,G/H)=3$. Therefore, Corollary~\ref{c:LowerBound} implies that $\gamma_u(G) \geqs 3$ and thus $\gamma_u(G)=3$. The other cases are handled in a similar fashion.
\end{rem}

\subsubsection{Even degree}
Next we consider the alternating groups of even degree; the analysis of the groups of odd composite degree is more complicated and we postpone the study of these groups to the end of the section.

Our main result is Proposition~\ref{t:altbd} below, which gives the exact value of $\gamma_u(A_n)$, up to a small constant. The key tool is the following result of Halasi (see \cite[Theorems 3.1 and 4.2]{Hal}) on the base size of $S_n$ on $k$-sets.

\begin{lem}\label{thm:Halasi}
Let $n \geqs 5$ be an integer and let $\O$ be the set of $k$-element subsets of $\{ 1, \ldots, n \}$ for some $1 \leqs k \leqs n/2$. Then 
\[ 
\left\lceil \log_2 n \right\rceil \leqs b(S_n,\O) \leqs \left\lceil \log_{\lceil n/k \rceil} n \right\rceil \cdot \left( \lceil n/k \rceil - 1 \right). 
\]
\end{lem}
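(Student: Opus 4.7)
For the lower bound, if $B = \{T_1, \ldots, T_b\}$ is a base of size $b = b(S_n, \O)$, consider the incidence map $\iota \colon \{1, \ldots, n\} \to \{0,1\}^b$ whose $i$-th coordinate is $1$ if $x \in T_i$ and $0$ otherwise. If $\iota(x) = \iota(y)$ for distinct $x, y$, then the transposition $(x\ y) \in S_n$ fixes every $T_i$ setwise, contradicting that $B$ is a base. Hence $\iota$ is injective, $n \leqs 2^b$, and $b \geqs \lceil \log_2 n \rceil$.

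For the upper bound, set $m = \lceil n/k \rceil$ and $c = \lceil \log_m n \rceil$, so that $m^c \geqs n$. The plan is to mimic a base-$m$ digit expansion. I would assign each $x \in \{1, \ldots, n\}$ an injective label $\ell(x) = (\ell_1(x), \ldots, \ell_c(x)) \in \{0, 1, \ldots, m-1\}^c$, and for each coordinate $i \in \{1, \ldots, c\}$ and nonzero value $j \in \{1, \ldots, m-1\}$ take the fibre $S_{i,j} := \ell_i^{-1}(j)$. If the labelling is chosen so that each such fibre has size exactly $k$---equivalently, $|\ell_i^{-1}(0)| = n - (m-1)k$ in every coordinate, a number automatically in $\{1, \ldots, k\}$ since $(m-1)k < n \leqs mk$---then the collection $\{S_{i,j} : 1 \leqs i \leqs c,\ 1 \leqs j \leqs m-1\}$ consists of $c(m-1)$ many $k$-subsets.

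To verify this collection is a base, observe that any $\pi \in S_n$ fixing every $S_{i,j}$ setwise also fixes the complementary fibre $\ell_i^{-1}(0) = \{1, \ldots, n\} \setminus \bigcup_{j \geqs 1} S_{i,j}$. Thus $\pi$ preserves every fibre of every coordinate function $\ell_i$, which means $\ell = \ell \circ \pi$, and injectivity of $\ell$ then forces $\pi = 1$.

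The principal obstacle is constructing a balanced injective labelling: one must select $n$ distinct strings from $\{0, \ldots, m-1\}^c$ so that in every coordinate, the value $0$ appears $n - (m-1)k$ times and every other value appears exactly $k$ times. Small cases such as $(n,k) = (7,3)$ already show that the most naive block decomposition need not satisfy all the required line sums, so some care is necessary. I would attack this combinatorial design problem by an inductive construction on the number of coordinates $c$, balancing fibre sizes at each stage while exploiting the slack $m^c \geqs n$; alternatively, the existence of a suitable labelling can be cast as the existence of a $c$-dimensional $0/1$-array with prescribed line sums and resolved via a Hall or network-flow argument. Should the rigid identification $S_{i,j} = \ell_i^{-1}(j)$ prove too inflexible for edge cases, one can relax to the weaker requirement that the $c(m-1)$ chosen $k$-subsets have an injective incidence map, which is strictly weaker than balanced fibres and leaves more room to conclude.
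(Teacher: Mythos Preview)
The paper does not prove this lemma; it is quoted from Halasi \cite[Theorems 3.1 and 4.2]{Hal}, so there is no in-paper argument to compare against.

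Your lower bound is correct and standard.

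For the upper bound, the strategy is sound in outline but the proof is incomplete. You correctly isolate the obstruction: the rigid balanced-fibre construction can fail. In your own test case $(n,k)=(7,3)$ with $m=3$, $c=2$, one would need to select $7$ strings from $\{0,1,2\}^2$ so that the symbol $0$ appears exactly once in each column; but deleting two strings from $\{0,1,2\}^2$ cannot remove two $0$'s from both columns simultaneously, since only $(0,0)$ has $0$ in both coordinates. You then retreat to sketches---induction on $c$, a Hall/flow argument, or the weaker injective-incidence requirement---without executing any of them. The relaxed version is the right target and does go through (for $(7,3)$ the four $3$-sets $\{1,2,3\}$, $\{1,4,5\}$, $\{2,4,6\}$, $\{3,5,6\}$ already separate all seven points), but producing a general construction of $c(m-1)$ separating $k$-subsets is exactly the substance of Halasi's upper bound, and it is the step your proposal leaves undone. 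As written, the upper bound is a plan rather than a proof.
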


Note that if $S_n$ acts faithfully on a finite set $\O$, then
\[
b(A_n, \O) \leqs b(S_n,\O) \leqs b(A_n, \O) + 1.
\]

\begin{prop}\label{t:altbd}
Let $n \geqs 6$ be an even integer. Then $\lceil\log_2 n \rceil - 1 \leqs \gamma_u(A_n) \leqs 2\lceil\log_2 n \rceil$.
\end{prop}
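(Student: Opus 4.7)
The plan is to derive both inequalities from Halasi's base-size estimates (Lemma~\ref{thm:Halasi}) applied to appropriate intransitive or imprimitive actions of $A_n$: Corollary~\ref{c:CriterionBase} will give the upper bound and Corollary~\ref{c:LowerBound} will give the lower bound.

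For the upper bound, suppose first that $n \geqs 8$. The proof of Theorem~\ref{t:AltMax} produces an element $s \in A_n$ of cycle shape $[k, n-k]$ with $k = n/2 - (n/2-1, 2) \in \{n/2-1, n/2-2\}$ and $\M(A_n, s) = \{H\}$, where $H = (S_k \times S_{n-k}) \cap A_n$. Corollary~\ref{c:CriterionBase} then gives $\gamma_u(A_n) \leqs b(A_n, A_n/H) \leqs b(S_n, A_n/H)$, and this coset action is the natural action on $k$-subsets of $\{1, \ldots, n\}$. Since $k \geqs n/2 - 2$, one checks that $\lceil n/k \rceil \leqs 3$ for $n \geqs 12$ while $\lceil n/k \rceil = 4$ only when $n = 10$, so Halasi's upper bound yields $b(S_n, A_n/H) \leqs 2\lceil \log_2 n \rceil$ after a short case-by-case inspection. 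The remaining case $n = 6$ is handled directly.

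For the lower bound, by Corollary~\ref{c:LowerBound} I need only show that every $s \in A_n^\#$ lies in some maximal subgroup $H \in \M(A_n, s)$ with $b(A_n, A_n/H) \geqs \lceil \log_2 n \rceil - 1$. I would distinguish two cases by the cycle structure of $s$. If $s$ has an orbit of length $k$ with $1 \leqs k < n/2$, then $s$ stabilises the corresponding $k$-subset and so lies in the maximal intransitive subgroup $H = (S_k \times S_{n-k}) \cap A_n$ (maximality from Lemma~\ref{l:lps}); the coset action is on $k$-subsets, and combining Halasi's lower bound with the inequality $b(S_n, \Omega) \leqs b(A_n, \Omega) + 1$ yields $b(A_n, A_n/H) \geqs \lceil \log_2 n \rceil - 1$. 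Otherwise every orbit of $s$ has length $\geqs n/2$; since $n$ is even and any $n$-cycle is odd, $s$ must be a product of exactly two disjoint $(n/2)$-cycles, forcing $s$ to lie in the maximal imprimitive subgroup $H = (S_{n/2} \wr S_2) \cap A_n$ (again maximal by Lemma~\ref{l:lps}), whose coset action is on partitions of $\{1, \ldots, n\}$ into two equal blocks.

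The main obstacle is bounding $b(A_n, A_n/H)$ from below in this second case, as Halasi's lemma concerns subsets rather than partitions. My plan is to prove $b(A_n, \Omega_{n/2}) \leqs b(A_n, A_n/H)$ via a direct lifting: given any base $\{\pi_1, \ldots, \pi_c\}$ for the partition action, I would choose a block $A_i \in \pi_i$ for each $i$ and observe that any $g \in A_n$ with $gA_i = A_i$ for all $i$ must fix every partition $\pi_i$, forcing $g = 1$; hence $\{A_1, \ldots, A_c\}$ is a base for the action on $(n/2)$-subsets. Applying Halasi's lemma with $k = n/2$ and using $b(S_n, \Omega) \leqs b(A_n, \Omega) + 1$ once more then delivers $b(A_n, A_n/H) \geqs \lceil \log_2 n \rceil - 1$, completing the proof.
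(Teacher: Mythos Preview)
Your proposal is correct and follows the same overall strategy as the paper: Halasi's bounds combined with Corollaries~\ref{c:CriterionBase} and~\ref{c:LowerBound}. The upper-bound argument is identical to the paper's.

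For the lower bound, the paper's argument is shorter than yours: it simply notes that since $n$ is even, any $1\ne s\in A_n$ is not an $n$-cycle, hence stabilises \emph{some} proper subset of $\{1,\ldots,n\}$, and then invokes Halasi's lower bound together with Corollary~\ref{c:LowerBound} for the resulting subset stabiliser $H$. No case split is made. Your Case~2 (passing to the maximal imprimitive subgroup $(S_{n/2}\wr S_2)\cap A_n$ and lifting a base for the partition action back to $(n/2)$-subsets) is a legitimate way to ensure you are applying Corollary~\ref{c:LowerBound} with an $H$ that is genuinely maximal, since the setwise stabiliser of an $(n/2)$-set is not maximal in $A_n$. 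The paper glosses over this point and applies Corollary~\ref{c:LowerBound} with the (possibly non-maximal) subset stabiliser directly; this is harmless because the proof of Corollary~\ref{c:LowerBound} via Lemma~\ref{l:Criterion} works verbatim for any core-free proper subgroup containing $s$, not just maximal ones. So your detour through the partition action is correct but avoidable: even when $s$ is a product of two $(n/2)$-cycles, it stabilises an $(n/2)$-subset, and the $k=n/2$ case of Lemma~\ref{thm:Halasi} already gives the required bound.
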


\begin{proof}
Write $G=A_n$ and $n=2m$. If $n=6$ then by a direct computation in \textsc{Magma} (see Section~\ref{sss:comp_search}) we can prove that $\gamma_u(G) = 4$; indeed, $\Gamma(G)$ has a total dominating set comprising four conjugate 5-cycles. 

Now assume that $n \geqs 8$. First we establish the upper bound. Set $l = (m-1,2)$ and fix $s \in G$ with shape $[k,n-k]$, where $k = m-l$. By the proof of Theorem~\ref{t:AltMax}, $\M(G,s) = \{ H \}$ with $H = (S_k \times S_{n-k}) \cap G$.  By combining Corollary~\ref{c:CriterionBase} and Lemma~\ref{thm:Halasi}, we get  
\[ 
\gamma_u(G) \leqs b(G,G/H) \leqs \left\lceil \log_{\left\lceil \frac{2n}{n-2l} \right\rceil} n \right\rceil \cdot \left\lceil \frac{n+2l}{n-2l} \right\rceil \leqs 2\lceil \log_2 n \rceil
\] 
as required.

To prove the lower bound, fix an element $1 \ne s \in G$. Since $n$ is even, $s$ is not an $n$-cycle. Therefore, $s$ is contained in the stabiliser $H$ of a proper subset of $\{1,\dots,n\}$, and we have $b(G,G/H) \geqs \lceil \log_2 n \rceil - 1$ by Lemma~\ref{thm:Halasi}. By applying Corollary~\ref{c:LowerBound}, we conclude that $\gamma_u(G) \geqs \lceil \log_2 n \rceil - 1$.
\end{proof}

\subsubsection{Odd degree}
To complete the proof of Theorem~\ref{t:mainn}, we may assume $G = A_n$, where $n \geqs 9$ is a composite odd integer. 

\begin{prop}\label{t:altodd}
Let $n \geqs 5$ be an odd integer. Then $\gamma_u(A_n) \leqs 77\log_2 n$.
\end{prop}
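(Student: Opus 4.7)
The bound is straightforward when $n$ is an odd prime: Proposition~\ref{t:main_alt} together with Remark~\ref{r:all} yields $\gamma_u(A_n) \leqs 4$, which is well below $77\log_2 n$ for all $n \geqs 5$. So I would assume $n \geqs 9$ is odd and composite and set $G = A_n$. Following the template of Proposition~\ref{t:altbd} (the even case), one could first hope to find an element $s \in G$ with $|\M(G,s)| = 1$ and deduce $\gamma_u(G) \leqs b(G, G/H)$ via Corollary~\ref{c:CriterionBase} and Halasi's bound (Lemma~\ref{thm:Halasi}). This route succeeds when $\mu(G) = 1$ (for instance $n = p^2$ with $p^2 \not\in \mathcal{H}$, by Theorem~\ref{t:AltMax}), but fails in general because $\mu(G) \geqs 2$ for most odd composite $n$.

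For the general case, the plan is to apply Lemma~\ref{l:ProbMethod} with $s$ chosen to be an $n$-cycle, which lies in $A_n$ precisely because $n$ is odd. An analysis analogous to that in the proof of Theorem~\ref{t:AltMax} shows that $\M(G,s)$ consists of an imprimitive subgroup $(S_a \wr S_b) \cap A_n$ for each non-trivial factorisation $n = ab$ with $a, b > 1$, together with at most $O(\log n)$ primitive overgroups of the form ${\rm P\Gamma L}_d(q) \cap A_n$ (arising only when $n \in \mathcal{H}$, controlled by an analogue of Lemma~\ref{l:pgam}). Since $\tau(n) = O(\log n)$, the total size of $\M(G,s)$ is logarithmic in $n$.

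The heart of the argument is to show that $\sum_{H \in \M(G,s)} \fpr(x, G/H) \leqs \alpha$ for some absolute constant $\alpha < 1$, uniformly across prime-order $x \in G$ and across odd composite $n$. For the imprimitive overgroups the fpr admits an explicit formula in terms of the orbit structure of $x$ on $\{1,\dots,n\}$; summing over non-trivial divisors of $n$ then yields uniform bounds such as $\sum_{1 < d \mid n} 1/d^2 < \pi^2/6 - 1 < 2/3$ for $3$-cycles, with analogous combinatorial bounds for elements of other prime orders. The primitive overgroups contribute far less, controlled by standard fpr estimates for classical groups in subspace actions.

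Combining these ingredients via Lemma~\ref{l:bd} with bounds on $|x^G|$ (polynomial in $n$ for small-support $x$, and paired with exponentially small fpr for large-support $x$), one concludes $\what{Q}(G, s, c) < 1$ whenever $c \geqs 77\log_2 n$, and hence $\gamma_u(G) \leqs c$. The main obstacle, and the source of the large constant, is the detailed case analysis across prime-order classes: the constant $77$ emerges from the worst-case ratio of $\log|x^G|$ to $\log(1/\alpha)$ for small-support classes such as $3$-cycles and $(2,2)$-involutions, where $\alpha$ is closest to $1$; one must verify that a single choice of $c$ proportional to $\log n$ simultaneously beats all such contributions and the logarithmically many terms from $\M(G,s)$.
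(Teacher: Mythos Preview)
Your approach diverges from the paper's in the choice of $s$, and this choice creates real difficulties. The paper does \emph{not} take $s$ to be an $n$-cycle. Instead, for $n \geqs 21$ odd it takes $s$ with shape as in \eqref{e:s} (three cycles of lengths roughly $n/3$), so that $\M(G,s)$ consists of exactly three \emph{intransitive} subgroups. The technical core is then Lemma~\ref{l:AltOddFpr}, which bounds ${\rm fpr}(x,G/H)$ for these intransitive $H$ when $|{\rm supp}(x)| \geqs 100$; combined with Guralnick--Kantor's bounds $P(x,s) \leqs 0.9$ (support $3,4$) and $P(x,s) \leqs 0.36$ (support $\geqs 5$), one obtains $\xi(t,s) \leqs 0.991^{t+1}$ uniformly, and the constant $77 \approx 1/\log_2(1/0.991)$ drops out directly.

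Your proposal with an $n$-cycle has two concrete errors. First, $\tau(n) = O(\log n)$ is false: the divisor function satisfies $\tau(n) = n^{O(1/\log\log n)}$ and is not bounded by any power of $\log n$. Second, the number of primitive overgroups of an $n$-cycle is not $O(\log n)$ either: Lemma~\ref{l:pgam} shows that for each $(q,d)$ with $n = (q^d-1)/(q-1)$ there are at least $\varphi(n)/(2df)$ copies of ${\rm P\Gamma L}_d(q) \cap G$ containing $s$, and $\varphi(n)$ is linear in $n$ in general. Beyond these counting issues, your sketch for bounding $\sum_H {\rm fpr}(x,G/H)$ over imprimitive $H$ glosses over the hardest case: involutions $x$ of large support can swap blocks in pairs (not merely fix them setwise), so the fpr on $G/(S_a \wr S_b)$ does not decay like $(a/n)^{|{\rm supp}(x)|}$ in the way your $3$-cycle heuristic suggests. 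The paper's intransitive-overgroup route avoids all of this; if you want to pursue the $n$-cycle route you would need a genuine analogue of Lemma~\ref{l:AltOddFpr} for wreath-product actions, and that is substantially more work than you indicate.
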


In order to prove Proposition~\ref{t:altodd}, we need some preliminary results. We start with the following technical lemma. Note that for the remainder of this section, we adopt the standard convention that $\binom{a}{b} = 0$ if $b > a$. 

\begin{lem}\label{l:BinomLemma}
Let $l$ and $m$ be integers such that $l \geqs 2$ and $0 \leqs m \leqs 4l$. Then 
\[
f(l,m) \coloneqq \sum_{j=0}^{\min\{l, \lfloor{m/2}\rfloor\}} \binom{l}{j}\binom{4l}{2m-4j} \geqs \binom{4l}{m}.
\]
\end{lem}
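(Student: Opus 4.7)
My plan is to bound $f(l,m)$ below by one or two judiciously chosen terms from the defining sum, and then invoke the unimodality of the binomial coefficients $\binom{4l}{k}$ (which is maximised at $k = 2l$). An initial observation: the substitution $j \mapsto l - j$ in the sum shows $f(l,m) = f(l, 4l - m)$, matching the symmetry $\binom{4l}{m} = \binom{4l}{4l-m}$, so I may assume $m \leq 2l$ throughout.

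The first range, $0 \leq m \leq 4l/3$, I would handle by retaining only the $j = 0$ term: one has $f(l,m) \geq \binom{4l}{2m}$, and the bound $m \leq 4l/3$ is precisely what guarantees that $2m$ lies at least as close to $2l$ as $m$ does (the case $m \leq l$ is immediate, while for $l \leq m \leq 4l/3$ the inequality $|2m - 2l| \leq 2l - m$ rearranges to $3m \leq 4l$). Unimodality then gives $\binom{4l}{2m} \geq \binom{4l}{m}$, as required.

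For the remaining range $4l/3 < m \leq 2l$, set $r = 2l - m \in [0, 2l/3)$ and choose $j$ so as to place $2m - 4j$ as close to the peak $2l$ as possible. If $l - r$ is even, take $j^\ast = (l-r)/2$, so that $2m - 4j^\ast = 2l$; then $f(l,m) \geq \binom{l}{j^\ast}\binom{4l}{2l} \geq \binom{4l}{m}$. If $l - r$ is odd, combine the two nearest choices $j_1 = (l-r-1)/2$ and $j_2 = (l-r+1)/2$ (both valid indices when $l \geq 2$); using $\binom{4l}{2l+2} = \binom{4l}{2l-2}$ and Pascal's identity,
\[
f(l, m) \;\geq\; \Bigl[\tbinom{l}{j_1} + \tbinom{l}{j_2}\Bigr]\tbinom{4l}{2l-2} \;=\; \tbinom{l+1}{j_2}\tbinom{4l}{2l-2}.
\]
For $r \geq 2$ this immediately exceeds $\binom{4l}{m}$, since $m \leq 2l - 2$ gives $\binom{4l}{m} \leq \binom{4l}{2l - 2}$. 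The two boundary subcases $r = 1$ (forcing $l$ even) and $r = 0$ (forcing $l$ odd) reduce to verifying explicit inequalities of the form $\binom{l+1}{\lfloor (l+1)/2 \rfloor} \geq C(l)$, where $C(l) = \binom{4l}{m}/\binom{4l}{2l-2} \leq 2$ for all $l \geq 2$; these hold with substantial room to spare, since the left-hand side grows at least linearly in $l$.

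The main obstacle is precisely this boundary analysis, and in particular the smallest instance $l = 2$, $m = 3$ (where $r = 1$), for which a single-term bound genuinely fails and one must combine two adjacent terms; this is also what makes the hypothesis $l \geq 2$ essential, since the analogous inequality breaks down at $l = 1$ (for instance $f(1,2) = 2 < 6 = \binom{4}{2}$).
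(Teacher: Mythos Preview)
Your argument is correct and constitutes a genuinely different proof from the one in the paper. Both proofs rely on the unimodality of $k \mapsto \binom{4l}{k}$, but the choice of which term(s) to keep from the sum differs. The paper selects $j = \lfloor m/4 \rfloor$ (or $\lceil m/4 \rceil$), placing $2m - 4j$ close to $m$ itself, so that $\binom{4l}{2m-4j} \geqs \binom{4l}{m}$ follows directly from unimodality whenever $m \leqs 2l-2$; the three central values $m \in \{2l-1, 2l, 2l+1\}$ are then treated separately (with $l = 2$ checked by hand). Your approach instead exploits the symmetry $f(l,m) = f(l,4l-m)$ to reduce to $m \leqs 2l$, then either uses the $j=0$ term (when $m \leqs 4l/3$) or aims $2m - 4j$ at the peak $2l$ (when $4l/3 < m \leqs 2l$), combining two adjacent terms via Pascal's identity in the odd-parity subcase. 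The trade-off is that the paper's choice gives a single term that works over a slightly wider interval, while your approach makes the role of the central peak more transparent and isolates the delicate case $(l,m) = (2,3)$ cleanly as the unique instance where a single term is insufficient.

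One small point of presentation: your assertion ``$C(l) \leqs 2$ for all $l \geqs 2$'' should really be read as ``for all relevant $l$,'' since for $r = 0$ and $l = 2$ one computes $\binom{8}{4}/\binom{8}{2} = 5/2 > 2$; this does not affect the argument because $r = 0$ forces $l$ odd, so $l = 2$ never arises there, but it would be worth stating the bound only for the parity-constrained values of $l$ that actually occur.
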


\begin{proof}
First assume $0 \leqs m \leqs 2l-2$, so $\lfloor{m/4}\rfloor \leqs \min\{l, \lfloor{m/2}\rfloor\}$ and we can consider the term 
\[ 
\binom{l}{\lfloor{m/4}\rfloor}\binom{4l}{2m-4\lfloor{m/4}\rfloor} 
\] 
in $f(l,m)$. If $m \leqs 2l-3$, then $m-3 \leqs 4\lfloor{m/4}\rfloor \leqs m$, so $m \leqs 2m-4\lfloor{m/4}\rfloor \leqs m+3 \leqs 2l$. Similarly, if $m = 2l-2$ then $2l-4 \leqs 4\lfloor{(2l-2)/4}\rfloor \leqs 2l-2$ and $m \leqs 2m-4\lfloor{m/4}\rfloor \leqs 2l$ in this case also. Therefore, 
\[
f(l,m) \geqs \binom{l}{\lfloor{m/4}\rfloor}\binom{4l}{2m-4\lfloor{m/4}\rfloor} \geqs \binom{4l}{2m-4\lfloor{m/4}\rfloor} \geqs \binom{4l}{m}. 
\]
A very similar argument applies if $2l+2 \leqs m \leqs 4l$, working with the term corresponding to $j=\lceil{m/4}\rceil$. We omit the details.

Finally, let us assume $m \in \{2l-1, 2l, 2l+1\}$. We will provide the details for $m=2l-1$; the other cases are very similar. The result is easily verified if $l=2$, so assume that $l \geqs 3$. Observe that $2l-2 \leqs 2m - 4\lfloor{m/4}\rfloor \leqs 2l$, so 
\[
\binom{4l}{2m-4\lfloor{m/4}\rfloor} \geqs \binom{4l}{2l-2} = \frac{2l-1}{2l+2} \binom{4l}{2l-1}  \geqs \frac{1}{2} \binom{4l}{m}.
\]
Additionally, since $m = 2l-1 \geqs 5$, we have
\[
\binom{l}{\lfloor{m/4}\rfloor} \geqs l \geqs 3
\]
and thus
\[
f(l,m) \geqs \binom{l}{\lfloor{m/4}\rfloor}\binom{4l}{2m-4\lfloor{m/4}\rfloor} \geqs 3 \cdot \frac{1}{2} \binom{4l}{m} \geqs \binom{4l}{m}.
\]
This completes the proof.
\end{proof}

The next result on fixed point ratios is a key ingredient in the proof of Proposition~\ref{t:altodd}. For $g \in G$, we write ${\rm supp}(g)$ to denote the support of $g$.

\begin{lem}\label{l:AltOddFpr}
Suppose $G = S_n$, where $n$ is odd and let $s \in G$ be an element with shape as in \eqref{e:s}. Fix an element $x \in G$ with shape $[d^r,1^{n-dr}]$ for some $d \geqs 2$ and $r \geqs 1$. Set $t = |{\rm supp}(g)| = dr$. If $t \geqs 100$, then
\[
\fpr(x,G/H) < 0.98^t
\]
for all $H \in \M(G,s)$.
\end{lem}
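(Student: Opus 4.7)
The plan has three ingredients. First, I would pin down $\mathcal{M}(G,s)$ exactly as in the proof of Theorem~\ref{t:AltMax}: the shape \eqref{e:s} has smallest cycle length strictly less than $n/2$, so Lemma~\ref{lem:Marggraf} kills every proper primitive overgroup of $s$, while Lemma~\ref{lem:Imprimitive}(iii) kills every imprimitive one. Hence each $H \in \mathcal{M}(G,s)$ is intransitive and is the stabiliser of a $k$-subset of $[n]$ with $k$ equal to a cycle length of $s$ or to a sum of two cycle lengths. In particular $\alpha := k/n$ lies within $O(1/n)$ of $1/3$ or $2/3$.

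Second, via the $G$-equivariant identification $G/H \cong \binom{[n]}{k}$, a coset is fixed by $x$ precisely when the corresponding $k$-subset is a union of $x$-orbits: that is, a union of $j$ of the $d$-cycles of $x$ together with $k - jd$ of the $n - t$ fixed points of $x$, for some $0 \leqs j \leqs r$. Thus
\[
\fpr(x, G/H) \;=\; \frac{1}{\binom{n}{k}} \sum_{j=0}^{r} \binom{r}{j}\binom{n-t}{k-jd}.
\]

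Third, I would bound this ratio by comparing the numerator with the Vandermonde expansion
\[
\binom{n}{k} \;=\; \sum_{i=0}^{t} \binom{t}{i}\binom{n-t}{k-i};
\]
the summand with $i = jd$ exceeds the $j$-th numerator summand by the factor $\binom{dr}{jd}/\binom{r}{j}$. For $d \geqs 3$ this factor grows at least like $\binom{(d-1)r}{(d-1)j}$ and the bound is routine; the critical case is $d = 2$, where this gap is narrowest and Lemma~\ref{l:BinomLemma} enters (applied with $l = r$, after a suitable Vandermonde split of $\binom{n-2r}{k-2j}$) to produce the required term-by-term comparison. Standard hypergeometric estimates of the form $\binom{n-t}{k-jd}/\binom{n}{k} \leqs C\alpha^{jd}(1-\alpha)^{t-jd}$, with $C$ a mild correction, then give
\[
\fpr(x, G/H) \;\leqs\; C\bigl(\alpha^d + (1-\alpha)^d\bigr)^{r}.
\]
For $\alpha$ close to $1/3$ or $2/3$ we have $(\alpha^d + (1-\alpha)^d)^{1/d} \leqs (5/9)^{1/2} < 0.746$ uniformly in $d \geqs 2$, so the claimed bound $0.98^t$ follows once $t \geqs 100$ is large enough to absorb the correction factor $C$.

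The principal obstacle is the calibration in the third step, namely pinning down the explicit constant $0.98$ and the explicit threshold $t \geqs 100$ uniformly across the three shapes in \eqref{e:s} and across all admissible pairs $(d, r)$ with $dr = t$. The hardest scenario is $d = 2$ with $j$ close to $r/2$ and $k$ close to $n/3$, where the ratio $\binom{t}{jd}/\binom{r}{j}$ is smallest; this is precisely the regime for which Lemma~\ref{l:BinomLemma} is designed to deliver the sharp constant.
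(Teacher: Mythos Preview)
Your first two ingredients are correct and match the paper exactly: $\mathcal{M}(G,s)$ consists of the three intransitive stabilisers $S_k \times S_{n-k}$ with $k$ a cycle length of $s$ (so $k/n \in [31/99,35/99]$, not near $2/3$), and the fixed point ratio formula is as you state.

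The third ingredient has a genuine gap. Your claimed estimate $\binom{n-t}{k-jd}/\binom{n}{k} \leqs C\,\alpha^{jd}(1-\alpha)^{t-jd}$ with a \emph{mild} constant $C$ is not standard and you do not prove it; when $t$ is comparable to $n$ the correction factor is not obviously bounded. Note moreover that if this product bound did hold with $C$ absorb\-able into $0.98^t$, then summing would give $\fpr \leqs C(\alpha^2+(1-\alpha)^2)^{r} \approx C(5/9)^{t/2} \approx C\cdot 0.745^t$ already for $d=2$, and nothing further would be needed --- so your own diagnosis that $d=2$ is the hard case is inconsistent with your claimed estimate.

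The paper avoids this difficulty by using instead the elementary bound
\[
\frac{\binom{n-t}{k-di}}{\binom{n}{k}} \;\leqs\; \min\!\left\{\Bigl(\tfrac{k}{n}\Bigr)^{di},\,\Bigl(1-\tfrac{k}{n}\Bigr)^{t-di}\right\} \;<\; 0.76^{t},
\]
which gives $\fpr < 2^{r}\cdot 0.76^{t} = (2^{1/d}\cdot 0.76)^{t}$. This is less than $0.9576^{t}$ for $d \geqs 3$, but exceeds $1$ for $d=2$ --- so $d=2$ genuinely is the hard case, but for a different reason than you suggest.

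Your description of how Lemma~\ref{l:BinomLemma} enters is incorrect. It is not applied with $l=r$ via a Vandermonde split of $\binom{n-2r}{k-2j}$. Rather, the paper sets $l=\lfloor r/4\rfloor$ and compares the element $x$ of shape $[2^{r},1^{n-2r}]$ with an element $z$ of shape $[4^{l},1^{n-4l}]$: first $\fpr(x,\Omega)\leqs\fpr(y,\Omega)$ trivially where $y$ has shape $[2^{4l},1^{n-8l}]$, and then $\fpr(y,\Omega)\leqs\fpr(z,\Omega)$ by counting, for each $m$, the $2m$-subsets of $\mathrm{supp}(y)$ fixed by $y$ versus those fixed by $z$. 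These counts are precisely $\binom{4l}{m}$ and $\sum_{j}\binom{l}{j}\binom{4l}{2m-4j}$, and Lemma~\ref{l:BinomLemma} is exactly the inequality $\binom{4l}{m}\leqs\sum_{j}\binom{l}{j}\binom{4l}{2m-4j}$. One then applies the $d=4$ bound to $z$, obtaining $\fpr(x,\Omega) \leqs \fpr(z,\Omega) < 0.9576^{4l}$ with $4l \geqs 47t/100$, hence $\fpr(x,\Omega) < 0.98^{t}$.
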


\begin{proof}
As noted in the proof of Theorem~\ref{t:AltMax}, $\M(G,s)$ comprises three intransitive subgroups. Write $H = S_k \times S_{n-k} \in \M(G,s)$, where $k < \frac{n}{2}$, and identify $G/H$ with the set $\O$ of $k$-element subsets of $\{1, \ldots, n\}$. Since a set $A \in \O$ is fixed (setwise) by $x$ if and only if each cycle of $x$ is contained in or disjoint from $A$, it follows that  
\[
\fpr(x,G/H) = \sum_{i=0}^{r} \binom{r}{i} f(i)
\]
where
\[
f(i) = \frac{\binom{n-t}{k-di}}{\binom{n}{k}} = \frac{k\cdots(k-di+1)(n-k)\cdots(n-t-k+di+1)}{n \cdots (n-t+1)}.
\]
Note that
\[
f(i) \leqs \min\left\{\left(\frac{k}{n}\right)^{di}, \left(1-\frac{k}{n}\right)^{t-di}\right\}.
\]

Since $n \geqs t \geqs 100$, from the shape of $s$ in \eqref{e:s} it follows that $31/99 \leqs k/n \leqs 35/99$. Therefore, if $di \geqs 0.265t$ then 
\[
f(i) \leqs \left( \frac{k}{n} \right)^{0.265t} \leqs \left( \left(\frac{35}{99}\right)^{0.265} \right)^t < 0.76^t,
\]
otherwise
\[
f(i) \leqs \left( \frac{n-k}{n} \right)^{0.735t} \leqs \left( \left(\frac{68}{99}\right)^{0.735} \right)^t < 0.76^t.
\]
It follows that
\[
\fpr(x,G/H) < \sum_{i=0}^{r} \binom{r}{i} 0.76^t
            = 2^r \cdot 0.76^t 
            = (2^{1/d} \cdot 0.76)^t
\]
and thus 
\begin{equation}\label{e:dg3}
\fpr(x,G/H) < (2^{1/3} \cdot 0.76)^t < 0.9576^t
\end{equation}
if $d \geqs 3$.

The case $d=2$ requires special attention. Here $x$ has cycle shape $[2^r,1^{n-2r}]$ with $r \geqs 50$. Set $l=\lfloor r/4 \rfloor$ and fix elements $y$ and $z$ in $G$ of shape $[2^{4l},1^{n-8l}]$ and $[4^l, 1^{n-4l}]$, respectively. Without loss of generality, we may assume that 
\begin{align*}
x & = (1,2)(3,4) \cdots (2r-1,2r) \\
y & = (1,2)(3,4) \cdots (8l-1,8l) \\
z & = (1,2,3,4)  \cdots (4l-3,4l-2,4l-1,4l).
\end{align*}
We claim that 
\[
\fpr(x,\O) \leqs \fpr(y,\O) \leqs \fpr(z,\O).
\]

Let ${\rm Fix}(g,\O)$ be the set of fixed points of $g$ on $\O$. Since a set $A \in \O$ is fixed by $g$ if and only if each cycle of $g$ is contained in or disjoint from $A$, it follows that ${\rm Fix}(x,\O) \subseteq {\rm Fix}(y,\O)$ and thus $\fpr(x,\O) \leqs \fpr(y,\O)$. 

For $0 \leqs m \leqs k/2$ define
\begin{align*}
Y_m &= \{ A \cap {\rm supp}(y) \,:\,  A \in {\rm Fix}(y,\O), |A \cap {\rm supp}(y)| = 2m \} \\
Z_m &= \{ A \cap {\rm supp}(y) \,:\, A \in {\rm Fix}(z,\O), |A \cap {\rm supp}(y)| = 2m \}.
\end{align*}

By counting the sets $A \in {\rm Fix}(y,\O)$ according to the size of $A \cap {\rm supp}(y)$, we see that 
\[
|{\rm Fix}(y,\O)| = \sum_{m=0}^{\min\{4l,\lfloor{k/2}\rfloor\}} \binom{n-8l}{k-2m} |Y_m|,
\] 
and similarly
\[
|{\rm Fix}(z,\O)| = \sum_{m=0}^{\min\{4l,\lfloor{k/2}\rfloor\}} \binom{n-8l}{k-2m} |Z_m|.
\] 
In particular, $\fpr(y,\O) \leqs \fpr(z,\O)$ if $|Y_m| \leqs |Z_m|$ for all $0 \leqs m \leqs k/2$.

Since a subset of ${\rm supp}(y)$ is fixed by $y$ if and only if it is a union of cycles of $y$, we have
\[
|Y_m| = \binom{4l}{m}.
\]
Similarly, a subset $A$ of ${\rm supp}(y)$ is fixed by $z$ if and only if it is a union of cycles of $z$, that is, $A = A_1 \cup A_2$ where $A_1$ is the support of a collection of $4$-cycles of $z$, and $A_2$ is a subset of ${\rm supp}(y) \setminus {\rm supp}(z) = \{4l+1, \ldots, 8l\}$ of the appropriate size. By considering the possible $4$-cycles corresponding to $A_1$, we deduce that 
\[
|Z_m| = \sum_{j=0}^{\min\{l,\lfloor{m/2}\rfloor\}}\binom{l}{j}\binom{4l}{2m-4j}.
\]
By Lemma~\ref{l:BinomLemma}, we have $|Y_m| \leqs |Z_m|$ for all $0 \leqs m \leqs k/2$, hence $\fpr(y,\O) \leqs \fpr(z,\O)$ as claimed. 

In view of the above bound for $d=4$ (see \eqref{e:dg3}), it follows that 
\[
\fpr(x,G/H) \leqs \fpr(z,G/H) < 0.9576^{4l}
\]
where $4l = 4\lfloor{r/4}\rfloor \geqs r-3 \geqs 47t/100$, and thus
\[
\fpr(x,G/H) < 0.9576^{47t/100} \leqs 0.98^t
\] 
as required.
\end{proof}

Finally, we are now in a position to prove Proposition~\ref{t:altodd}, which completes the proof of Theorem~\ref{t:mainn}.

\begin{proof}[Proof of Proposition~\ref{t:altodd}]
First assume $n \leqs 19$. If $n$ is a prime then $\gamma_u(G) \leqs 4$ by 
Proposition~\ref{t:main_alt} and Remark~\ref{r:all}. For $n \in \{9,15\}$, a straightforward computation in \textsc{Magma} shows that $G$ has a total dominating set consisting of $6$ conjugate $n$-cycles (see Section~\ref{sss:comp_prob}). For the remainder, we may assume $n \geqs 21$.

We apply the probabilistic method in Lemma~\ref{l:ProbMethod}. Let $s \in G$ be an element with shape as in \eqref{e:s} and suppose $x \in G$ has prime order. As in the proof of Lemma~\ref{l:ProbMethod}, write
\[
P(x,s) =  \frac{|\{z \in s^G \,:\, G \ne  \< x,z \>\}|}{|s^G|}
\]
and recall that
\[
P(x,s) \leqs \sum_{H \in \M(G,s)}^{} \fpr(x,G/H).
\]
For $t \in \{3, \ldots, n\}$, let $N_t$ be the number of elements in $G$ with support of size $t$. By \eqref{e:pxs},
\[
Q(G,s,c) \leqs \sum_{t=3}^{n} N_t \cdot \xi(t,s)^c,
\]
where 
\[
\xi(t,s) = \max\{P(x,s) \,:\, \mbox{$x \in G$ has prime order and $|{\rm supp}(x)| = t$}\}.
\]
Note that $N_t \leqs n^t$. 

Set $\alpha = 0.991$ and write $|{\rm supp}(x)|=t$. By the proof of \cite[Proposition~7.1]{GK}, using the fact that $n \geqs 21$, we see that 
$P(x,s) \leqs 0.9$ if $t \in \{3,4\}$, and $P(x,s) \leqs 0.36$ if $t \geqs 5$.  Therefore, $\xi(t,s) \leqs \alpha^{t+1}$ if $t < 100$. Similarly, if $t \geqs 100$ then Lemma~\ref{l:AltOddFpr} implies that 
\[
\xi(t,s) \leqs 3 \cdot 0.98^t = 3 \cdot 0.98^{t/2-1/2} (0.98^{1/2})^{t+1} \leqs \alpha^{t+1}.
\]
Therefore, $\xi(t,s) \leqs \alpha^{t+1}$ for all $t$. As a result, if $c = \log_{1/\alpha}{n} \leqs 77 \log_2{n}$, then
\[
Q(G,s,c) \leqs \sum_{t=3}^{n} n^t (\alpha^{t+1})^c = \sum_{t=3}^{n} n^t \left( \frac{1}{n} \right)^{t+1} = \sum_{t=3}^{n} \frac{1}{n} = \frac{n-2}{n} < 1
\]
and we conclude that $\gamma_u(G) \leqs c \leqs 77 \log_2{n}$.
\end{proof}


\section{Sporadic simple groups}\label{s:spor}

\subsection{Maximal overgroups}\label{ss:spor_max}

In this section we determine the exact value of $\mu(G)$ for all sporadic simple groups $G$.

\begin{thm}\label{t:SporadicMax}
For each sporadic simple group $G$, the value of $\mu(G)$ is recorded in Table~\ref{tab:Sporadic}. In particular, $\mu(G) \leqs 3$, with equality if and only if $G \in \{{\rm M}_{12}, {\rm J}_2, {\rm McL}, {\rm Suz} \}$.
\end{thm}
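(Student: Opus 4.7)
The proof is computational, proceeding case-by-case through the 26 sporadic simple groups. The core observation is that, for any maximal subgroup $H$ of $G$ and any element $s \in G$, the number of distinct $G$-conjugates of $H$ that contain $s$ equals $\chi(s)$, where $\chi = 1_{N_G(H)}^G$ is the permutation character of $G$ on cosets of $N_G(H)$; when $H$ is self-normalising (the generic situation for sporadic groups), this reduces to $1_H^G(s)$. Summing over representatives $H$ of the conjugacy classes of maximal subgroups of $G$ then gives $|\M(G,s)|$ exactly, so the problem is reduced to evaluating appropriate permutation characters.

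The plan is therefore to work within the \textsf{GAP} Character Table Library \cite{CTblLib}, which stores the ordinary character table of each sporadic simple group together with, in most cases, the permutation characters associated to each class of maximal subgroups. For each $G$ I would iterate over representatives of the conjugacy classes of $G$, evaluate $|\M(G,s)|$ as the sum of permutation character values described above, and take the minimum over all classes. The minimum value is $\mu(G)$, and a class representative achieving it is recorded in Table~\ref{tab:Sporadic}. The two assertions of the theorem---the bound $\mu(G) \leqs 3$ and the identification of the equality cases---then follow by direct inspection of the table.

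The principal obstacle is the two largest sporadic groups, the Baby Monster $B$ and the Monster $M$, for which the full list of maximal subgroups and the relevant permutation characters are not completely available in the library. For these, I would instead exhibit a specific element $s \in G$, typically of large order chosen so as to lie in very few maximal subgroups (for instance, a generator of a cyclic subgroup of order equal to, or close to, the maximal element order in $G$), and verify the value of $|\M(G,s)|$ directly: the upper bound by constructing $s$ explicitly in \textsc{Magma} (or by appeal to the available partial data) and locating its maximal overgroups, and the lower bound by an exhaustive check against the known list of maximal subgroups together with any character-theoretic information that is available. Combining these computations with the library-based analysis for the remaining twenty-four groups yields Table~\ref{tab:Sporadic} and hence the theorem.
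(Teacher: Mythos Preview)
Your approach is essentially the same as the paper's: both rely on the character-theoretic computation of $|\M(G,s)|$ via permutation characters in the \textsf{GAP} Character Table Library (this is exactly the method described in Section~\ref{sss:comp_prob}), iterating over conjugacy class representatives to determine $\mu(G)$.

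The only divergence is in the treatment of $\mathbb{B}$ and $\mathbb{M}$. The paper simply cites \cite[Table~IV]{GK}, where Guralnick and Kantor already established $\mu(G)=1$ by exhibiting the specific elements in classes \texttt{47A} and \texttt{59A}, respectively. Your proposal to redo this via direct construction in \textsc{Magma} and an exhaustive check against maximal subgroups is workable in principle for $\mathbb{B}$, but for $\mathbb{M}$ it is delicate: the complete list of maximal subgroups of the Monster is not known, so an ``exhaustive check'' is not literally available. The argument in \cite{GK} for the class \texttt{59A} works because one can rule out any putative unknown maximal subgroup containing an element of order $59$; you would need to reproduce that reasoning rather than rely on a complete enumeration. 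Citing \cite{GK} directly, as the paper does, is both cleaner and avoids this subtlety.
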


\begin{proof}
If $G = \mathbb{B}$ or $\mathbb{M}$, then $\mu(G) = 1$ (see \cite[Table~IV]{GK}); in the final two rows of Table~\ref{tab:Sporadic} we present an element $s \in G$ (using \textsc{Atlas} \cite{ATLAS} notation to identify the conjugacy class of $s$) such that $|\M(G,s)|=1$. In each of the remaining cases, we can use \textsf{GAP} to determine $|\M(G,s)|$ for each conjugacy class representative $s \in G$ (see  Section~\ref{sss:comp_max}). In this way, we compute $\mu(G)$ and we identify an element $s \in G$ with $|\M(G,s)|=\mu(G)$. This information is presented in Table~\ref{tab:Sporadic}. (For $G \not\in \{{\rm Co}_1, {\rm Fi}_{22}, {\rm Fi}_{23}\}$, the value of $\mu(G)$ given in Table~\ref{tab:Sporadic} was known to be an upper bound; see \cite[Table IV]{GK} and \cite[Table 7]{BGK}.)
 \end{proof}

\begin{table}
\begin{center}
\[\begin{array}{lccclcc} 
\hline
G              & \mu(G) & \gamma_u(G)       & s            & \M(G,g)                                                & b & c \\ 
\hline
{\rm M}_{11}   & 1      & 4                 & \texttt{11A} & {\rm L}_2(11)                                          & 4 &   \\
{\rm M}_{12}   & 3      & 4 & \texttt{10A} & A_6.2^2, \: A_6.2^2, \: 2 \times S_5                   &   & 6 \\ 
{\rm M}_{22}   & 1      & 3                 & \texttt{11A} & {\rm L}_2(11)                                          & 3 &   \\
{\rm M}_{23}   & 1      & 2                 & \texttt{23A} & 23{:}11                                                & 2 &   \\
{\rm M}_{24}   & 2      & \mbox{$3$ or $4$} & \texttt{21A} & {\rm L}_3(4){:}S_3, \: 2^6{:}({\rm L}_3(2) \times S_3) &   & 4 \\

{\rm J}_1      & 1      & 2                 & \texttt{15A} & D_6 \times D_{10}                                      & 2 &   \\  
{\rm J}_2      & 3      & \mbox{$3$ or $4$} & \texttt{10C} & 2^{1+4}{:}A_5, \: A_5 \times D_{10}, \: 5^2{:}D_{12}   &   & 4 \\
{\rm J}_3      & 2      & \mbox{$2$ or $3$} & \texttt{19A} & {\rm L}_{2}(19), \: {\rm L}_2(19)                      &   & 3 \\
{\rm J}_4      & 1      & 2                 & \texttt{43A} & 43{:}14                                                & 2 &   \\

{\rm HS}       & 2      & \mbox{$3$ or $4$} & \texttt{15A} & S_8, \: 5{:}4 \times A_5                               &   & 4 \\
{\rm Suz}      & 3      & 3                 & \texttt{14A} & {\rm J}_2{:}2, \: {\rm J}_2{:}2, \: (A_4 \times {\rm L}_{3}(4)){:}2   &   & 3 \\
{\rm McL}      & 3      & 3                 & \texttt{15A} & 3^{1+4}{:}2.S_5, \: 2.A_8, \: 5^{1+2}{:}3{:}8          &   & 3 \\
{\rm Ru}       & 1      & 2                 & \texttt{29A} & {\rm L}_{2}(29)                                        & 2 &   \\ 
{\rm He}       & 1      & \mbox{$2$ or $3$} & \texttt{17A} & {\rm Sp}_{4}(4){:}2                                    & 4 &   \\
               &        &                   & \texttt{21A} & 3.S_7, \: 7^{1+2}{:}(3 \times S_3),                    &   & 3 \\  
               &        &                   &              & 7{:}3 \times {\rm L}_3(2), \: 7{:}3 \times {\rm L}_3(2) &  &   \\
{\rm Ly}       & 1      & 2                 & \texttt{28A} & 2.A_{11}                                               & 2 &   \\
{\rm O'N}      & 2      & 2                 & \texttt{31A} & {\rm L}_{2}(31), \: {\rm L}_{2}(31)                    &   & 2 \\
{\rm Co}_1     & 1      & \mbox{$2$ or $3$} & \texttt{26A} & (A_4 \times G_2(4)){:}2                                & 3 &   \\
{\rm Co}_2     & 1      & 3                 & \texttt{23A} & {\rm M}_{23}                                           & 3 &   \\
{\rm Co}_3     & 1      & 3                 & \texttt{23A} & {\rm M}_{23}                                           & 3 &   \\
{\rm Fi}_{22}  & 1      & \mbox{$3$ or $4$} & \texttt{22A} & 2.{\rm U}_6(2)                                         & 5 &   \\
               &        &                   & \texttt{16A} & 2^{5+8}{:}(S_3 \times A_6), \: 2.2^{1+8}{:}({\rm U}_{4}(2){:}2)  &   & 4 \\
               &        &                   &              & 2^{10}{:}{\rm M}_{22}, \:{}^2F_{4}(2)' \: \mbox{(4 times)}       &   &   \\ 
{\rm Fi}_{23}  & 1      & 2                 & \texttt{35A} & S_{12}                                                 & 2 &   \\
{\rm Fi}_{24}' & 1      & 2                 & \texttt{29A} & 29{:}14                                                & 2 &   \\ 
{\rm HN}       & 1      & \mbox{$2$ or $3$} & \texttt{22A} & 2.{\rm HS}.2                                           & 3 &   \\
{\rm Th}       & 2      & 2                 & \texttt{19A} & {\rm U}_3(8){:}6, \: {\rm L}_2(19){:}2                 &   & 2 \\
\mathbb{B}     & 1      & 2                 & \texttt{47A} & 47{:}23                                                & 2 &   \\
\mathbb{M}     & 1      & 2                 & \texttt{59A} & {\rm L}_{2}(59)                                        & 2 &   \\ 
\hline
\end{array}\]
\caption{Sporadic simple groups} \label{tab:Sporadic}
\end{center}
\end{table}

\subsection{Uniform domination number}\label{ss:spor_udn}

Our main result on the uniform domination number of sporadic groups is the following. Note that this immediately implies Theorem~\ref{t:main3}.

\begin{thm}\label{t:SporadicUDN}
Let $G$ be a sporadic simple group. Then 
\[
d-\e \leqs \gamma_u(G) \leqs d,
\]
where $d$ is defined as follows:
\[
{\renewcommand{\arraystretch}{1.1}
\begin{array}{lccccccccccccc}
\hline
G & {\rm M}_{11} & {\rm M}_{12} & {\rm M}_{22} & {\rm M}_{23} & {\rm M}_{24} & {\rm J}_{1} & {\rm J}_{2}   & {\rm J}_{3}   & {\rm J}_{4}    & {\rm HS} & {\rm Suz} & {\rm McL}  & {\rm Ru}   \\ 
d & 4            & 4          & 3            & 2            & 4^*          & 2           & 4^*           & 3^*           & 2              & 4^*      & 3         & 3          & 2          \\
\hline
  &              &              &              &              &              &             &               &               &                &          &           &            &            \\          
\hline
  & {\rm He}     & {\rm Ly}     & {\rm O'N}    & {\rm Co}_1   & {\rm Co}_2   & {\rm Co}_3  & {\rm Fi}_{22} & {\rm Fi}_{23} & {\rm Fi}_{24}' & {\rm HN} & {\rm Th}  & \mathbb{B} & \mathbb{M} \\ 
  & 3^*          & 2            & 2            & 3^*          & 3            & 3           & 4^*           & 2             & 2              & 3^*      & 2         & 2          & 2          \\
\hline
\end{array}}
\]
Here an asterisk indicates that $\e=1$; otherwise $\e=0$ and $\gamma_u(G) = d$. In particular, $\gamma_u(G) \leqs 4$, with equality if $G = {\rm M}_{11}$ or ${\rm M}_{12}$. 
\end{thm}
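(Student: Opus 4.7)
The proof proceeds group-by-group, exploiting the data recorded in Table~\ref{tab:Sporadic} from the proof of Theorem~\ref{t:SporadicMax}. For each sporadic group $G$, the element $s \in G$ listed in that table realises $|\M(G,s)| = \mu(G)$ and serves as the basepoint for constructing a small TDS of conjugates.

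For the upper bounds, I would split according to $\mu(G)$. When $\mu(G) = 1$, writing $\M(G,s) = \{H\}$, Corollary~\ref{c:CriterionBase} gives $\gamma_u(G) \leqs b(G,G/H)$, and the relevant base size $b$ (column \emph{b} of the table) is either available from the literature on bases for primitive actions of sporadic groups (e.g.\ \cite{BOW}) or else is computed in \textsf{GAP} via the character-theoretic method of Section~\ref{sss:comp_prob}. When $\mu(G) \geqs 2$, the base-size argument no longer applies, so I would first apply Lemma~\ref{l:ProbMethod}: using the Character Table Library to enumerate $\M(G,s)$ and to evaluate $\fpr(x,G/H)$ for each prime-order class representative $x$ and each $H \in \M(G,s)$, verify $\what Q(G,s,c) < 1$ for the value $c$ in the table. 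In several cases a random search in \textsc{Magma} (Section~\ref{sss:comp_search}) sharpens the bound, by exhibiting an explicit $c$-tuple of conjugates of $s$ which is a TDS for $\Gamma(G)$.

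For the lower bounds, Remark~\ref{r:LowerBound} is the principal tool. For each conjugacy class representative $g$ (in fact, only a set of representatives whose cyclic subgroups cover $G^{\#}$ is needed), I would identify some $H \in \M(G,g)$ with $b(G,G/H) \geqs d - \e$; Corollary~\ref{c:LowerBound} then yields $\gamma_u(G) \geqs d-\e$. The bound $\gamma_u(G) \geqs 2$ needed in the $d = 2$ rows is immediate since none of the groups is cyclic. When $d \geqs 3$ (e.g.\ $G \in \{\mathrm{M}_{22}, \mathrm{McL}, \mathrm{Suz}, \mathrm{Co}_2, \mathrm{Co}_3, \mathrm{O'N}, \mathrm{Th}\}$ with $\e = 0$), a case-by-case inspection of the conjugacy classes combined with the tables of base sizes for sporadic primitive actions in \cite{BOW} delivers the required witness $H$.

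The main obstacle is the exact lower bound $\gamma_u(\mathrm{M}_{12}) \geqs 4$, for which the base-size argument is insufficient. Running the procedure of Remark~\ref{r:LowerBound} through the classes of $\mathrm{M}_{12}$ using the data of \cite{BOW}, one finds that for every $g \in G^{\#}$ except those in class \texttt{6A} there exists $H \in \M(G,g)$ with $b(G,G/H) \geqs 4$, which reduces the problem to ruling out a TDS of size $3$ consisting of conjugates from \texttt{6A}. Since every such hypothetical TDS can be taken (up to conjugation) to contain a fixed representative $s \in \texttt{6A}$, this reduces to an exhaustive enumeration over pairs in $s^G$ of the conjugates $\{s, s^{g_1}, s^{g_2}\}$, testing via Lemma~\ref{l:Criterion} (i.e.\ verifying $H_0 \cap H_1 \cap H_2 = 1$ for all choices of $H_i \in \M(G, s^{g_i})$). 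As recorded in Remark~\ref{r:computation}, this search is feasible but lengthy, and it is the computational bottleneck of the theorem.
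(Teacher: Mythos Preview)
Your approach mirrors the paper's, but there are two gaps worth noting.

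First, the dichotomy by $\mu(G)$ is too rigid for the upper bounds. For ${\rm He}$ and ${\rm Fi}_{22}$ one has $\mu(G)=1$, but the base-size bound from Corollary~\ref{c:CriterionBase} yields only $\gamma_u({\rm He}) \leqs b(G,G/H)=4$ and $\gamma_u({\rm Fi}_{22}) \leqs 5$, whereas the theorem claims upper bounds $3$ and $4$. The paper obtains these sharper bounds by applying Lemma~\ref{l:ProbMethod} with a \emph{different} element $s$ (in classes \texttt{21A} and \texttt{16A} respectively) for which $|\M(G,s)|>1$; one then checks $\widehat{Q}(G,s,c)<1$ with $c=3,4$. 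So the probabilistic method should be run over all class representatives, not invoked only when $\mu(G)\geqs 2$. (Relatedly, for ${\rm M}_{12}$ the probabilistic method gives only $c=6$; the upper bound $4$ genuinely requires the random search you mention.)

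Second, your reduction for the ${\rm M}_{12}$ lower bound is slightly misstated. The base-size data in \cite{BOW} does \emph{not} furnish an overgroup of base size $\geqs 4$ for elements in class \texttt{3B}; the base-size argument leaves both \texttt{3B} and \texttt{6A} unresolved. The further reduction to \texttt{6A} alone uses the power relation (the square of a \texttt{6A} element lies in \texttt{3B}) via the covering observation in Remark~\ref{r:LowerBound}, which you stated earlier but did not invoke at this point. A minor slip: ${\rm O'N}$ and ${\rm Th}$ have $d=2$ in the table, so they should not appear in your list of $d\geqs 3$, $\e=0$ examples.
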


\begin{proof}
If $\mu(G) = 1$ then we choose an element $s \in G$ such that $\M(G,s) = \{H\}$ and $b(G,G/H)$ is minimal (note that the base size of every almost simple primitive group with sporadic socle has been computed; see \cite{BOW, NNOW}). The element $s$, subgroup $H$ and base size $b = b(G,G/H)$ are recorded in Table~\ref{tab:Sporadic}, and we note that $\gamma_u(G) \leqs b$ by Corollary~\ref{c:CriterionBase}. In particular, we conclude that $\gamma_u(G)=2$ if $G=  \mathbb{B}$ or $\mathbb{M}$. 

For each sporadic group $G \not\in \{ \mathbb{B},\mathbb{M} \}$ and each class representative $s \in G$ we use \textsf{GAP} to determine the minimal  $c$ such that $\widehat{Q}(G,s,c) < 1$ (see \eqref{e:qsc}), following the method described in Section~\ref{sss:comp_prob}. By Lemma~\ref{l:ProbMethod}, we have $\gamma_u(G) \leqs c$. For the groups with $\mu(G)=1$, we almost always find that $b \leqs c$; the exceptions are the cases $G \in \{ {\rm He}, {\rm Fi}_{22} \}$, where it is better to apply Lemma~\ref{l:ProbMethod} with an element $s \in G$ for which $|\M(G,s)| > 1$. For these two groups, and also for those with $\mu(G)>1$, we record the minimal value of $c$ in Table~\ref{tab:Sporadic}, together with an element $s \in G$ such that $\widehat{Q}(G,s,c) < 1$. For example, if $G = {\rm He}$ and $s \in \texttt{17A}$ then Table~\ref{tab:Sporadic} indicates that $\M(G,s) = \{H\}$ with $H = {\rm Sp}_{4}(4).2$ and $b(G,G/H)=4$. However, if we choose $s \in \texttt{21A}$ then $|\M(G,s)| = 4$ and $\widehat{Q}(G,s,3)<1$, so $\gamma_u(G) \in \{2,3\}$.

To derive a lower bound on $\gamma_u(G)$, we proceed as in Remark~\ref{r:LowerBound} (see Section~\ref{sss:comp_max}). For example, if $G = {\rm M}_{11}$ then every element of $G$ is contained in a subgroup $H$ isomorphic to ${\rm L}_2(11)$ or ${\rm M}_{10}$; in both cases $b(G,G/H) = 4$, so Corollary~\ref{c:LowerBound} implies that $\gamma_u(G) \geqs 4$. With the exception of $G={\rm M}_{12}$, this explains how we obtain the results on $\gamma_u(G)$ presented in Table~\ref{tab:Sporadic}. 

The case $G = {\rm M}_{12}$ requires special attention. Here the above approach only gives $3 \leqs \gamma_u(G) \leqs 6$, but by carrying out a random search in \textsc{Magma} (see Section~\ref{sss:comp_search}) one can show that the class $\texttt{10A}$ contains a total dominating set for $\Gamma(G)$ of size $4$ and thus 
$\gamma_u(G) \in \{3,4\}$. To rule out the existence of a uniform dominating set of size $3$, we first combine the base size results in \cite{BOW} with 
Corollary~\ref{c:LowerBound} to reduce the problem to the classes labelled \texttt{3B} and \texttt{6A}. In fact, since the square of an element in \texttt{6A} is in the class \texttt{3B}, we only need to consider \texttt{6A}. The required exhaustive search can now be carried out in {\sc Magma} and we refer the reader to \cite[Section~1.2.4]{BH_comp} for further details of this computation. We conclude that $\gamma_u(G)=4$.
\end{proof}


\section{Exceptional groups of Lie type}\label{s:ex}

Let us now assume $G$ is a simple group of Lie type over $\mathbb{F}_q$, where $q=p^a$ and $p$ is prime. In this section we prove Theorems~\ref{t:main4} and~\ref{t:main5} for exceptional groups of Lie type; the classical groups will be handled in Section~\ref{s:cla}.

\subsection{Maximal overgroups}\label{ss:ex_max}

\begin{thm}\label{t:ExceptionalMax}
Let $G$ be a finite simple exceptional group of Lie type over $\mathbb{F}_q$, where $q=p^a$ and $p$ is prime. Then either $\mu(G)=1$ or one of the following holds:
\begin{itemize}\addtolength{\itemsep}{0.2\baselineskip}
\item[{\rm (i)}]  either $G = F_4(q)$ with $p=2$, or $G = G_2(q)$ with $q > p = 3$, and $\mu(G) \leqs 2$;
\item[{\rm (ii)}] $(G,\mu(G)) \in \{ ({}^2F_4(2)',2), (G_2(3),3) \}$.
\end{itemize}
In particular, $\mu(G) \leqs 3$ with equality if and only if $G = G_2(3)$.
\end{thm}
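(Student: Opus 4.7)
The proof rests primarily on the work of Weigel~\cite{Wei}, combined with the classification of maximal subgroups of exceptional groups of Lie type (due to Liebeck, Seitz, Kleidman, Cooperstein, Malle and others). The plan is to exhibit, for each generic exceptional group $G$, a single conjugacy class of regular semisimple elements $s \in G$ whose maximal overgroups are essentially forced by the maximal torus $T = C_G(s)$ containing $s$, and then to verify separately that this strategy degenerates only in the four cases listed.

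First I would treat the generic case. For each untwisted or twisted type, the idea is to pick $s$ of order divisible by a primitive prime divisor $\ell$ of $\Phi_d(q)$, where $d$ is one of the ``large'' cyclotomic indices associated with $G$ (for instance $d = 30$ for $E_8(q)$, $d = 12$ for $F_4(q)$, $d = 18$ for $E_6(q)$, $d = 7$ for ${}^2G_2$-type elements in $G_2(q)$, etc.). Such an $\ell$ is contained in a unique cyclic maximal torus $T$ up to conjugacy, and Weigel's analysis shows that every maximal overgroup of an $\ell$-element either contains $T$ or is a member of a small explicit list of ``large'' subgroups determined by $\ell$. In almost every case the list reduces to the torus normaliser $N_G(T)$, giving $\mathcal{M}(G,s) = \{N_G(T)\}$ and hence $\mu(G) = 1$. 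I would simply go through each exceptional type, quote the relevant Zsigmondy prime and the corresponding line of Weigel's tables, and read off the conclusion.

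The main obstacle is the two families in (i). For $G = F_4(q)$ with $q = 2^a$ and for $G = G_2(q)$ with $q = 3^a > 3$, the natural choice of $s$ lies not only in $N_G(T)$ but also in a characteristic-specific subgroup coming from the exceptional isogeny: the subgroup ${}^2F_4(q) \leqs F_4(q)$ when $a$ is odd, or an ${\rm Sp}_8(q)$-like/${}^2F_4$-related overgroup inherited from the exceptional graph automorphism in characteristic $2$, and analogously ${}^2G_2(q) \leqs G_2(q)$ in characteristic $3$. Here I would use the full list of maximal subgroups to show that these are the \emph{only} extra overgroups of $s$, so that $|\mathcal{M}(G,s)| \leqs 2$; choosing the primitive prime divisor of $\Phi_{12}(q)$ (resp.\ a primitive prime divisor of $q^2 - q + 1$) as the order of $s$ makes this explicit because the Weigel element's order is coprime to the index of ${}^2F_4(q)$ in $F_4(q)$ (resp.\ ${}^2G_2(q)$ in $G_2(q)$) in a strong enough sense to pin down the overgroups.

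Finally, for the two small cases $G \in \{G_2(3),\, {}^2F_4(2)'\}$, I would invoke the computational method of Section~\ref{sss:comp_max}: use the character table library in \textsf{GAP} (or direct construction in \textsc{Magma}) to compute $|\mathcal{M}(G,s)|$ for a representative $s$ from each conjugacy class of $G$. For ${}^2F_4(2)'$ an element of order $13$ (lying in the unique $13$-torus) gives $|\mathcal{M}(G,s)| = 2$, and for $G_2(3)$ a direct search over all classes verifies $\mu(G_2(3)) = 3$, with the minimum realised by an element of order $13$ or by a suitable unipotent/semisimple element, whose three maximal overgroups can be listed explicitly. Assembling the three steps yields the bound $\mu(G) \leqs 3$, with equality only for $G_2(3)$, which is the assertion of the theorem.
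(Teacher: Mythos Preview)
Your overall architecture matches the paper's: invoke Weigel~\cite{Wei} for the generic case, and handle a handful of small groups by direct computation. However, two points need correction.

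First, your description of the two maximal overgroups in case~(i) is wrong. For $G = F_4(q)$ with $q = 2^a > 2$, the element $s$ identified by Weigel has $\mathcal{M}(G,s) = \{H,K\}$ with $H \cong K \cong {}^3D_4(q).3$, not ${}^2F_4(q)$ or an ${\rm Sp}_8$-type subgroup; for $G = G_2(q)$ with $q = 3^a > 3$, one gets $H \cong K \cong {\rm SU}_3(q).2$, not ${}^2G_2(q)$. The exceptional graph automorphism in characteristic $2$ (resp.\ $3$) is indeed the underlying reason there are \emph{two} such overgroups rather than one (it interchanges them), but the overgroups themselves are not the twisted Ree-type subgroups. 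Your cyclotomic heuristic (``coprimality to the index of ${}^2F_4(q)$'') therefore does not do the work you claim.

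Second, you have not accounted for the small-$q$ exceptions where Weigel's argument does not apply directly because the relevant Zsigmondy prime fails to exist or the torus-normaliser picture degenerates. The paper isolates the set
\[
\mathcal{E} = \{E_7(2),\, E_7(3),\, {}^2E_6(2),\, {}^2E_6(3),\, F_4(3),\, F_4(2),\, {}^2F_4(2)',\, G_2(3),\, G_2(4)\},
\]
disposes of the first five via \cite[Proposition~6.2]{GK}, and computes $\mu(G)$ for the last four in \textsf{GAP}. Your proposal treats only $G_2(3)$ and ${}^2F_4(2)'$ computationally, so $F_4(2)$, $G_2(4)$ and the five larger groups are unaddressed. (Incidentally, for ${}^2F_4(2)'$ the paper uses an element in class \texttt{16A}, not one of order $13$.)
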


\begin{proof}
This is essentially an immediate corollary of the work of Weigel in \cite{Wei}. Set
\[
\mathcal{E} = \{E_7(2), E_7(3),  {}^2E_6(2),  {}^2E_6(3), F_4(3), F_4(2), {}^2F_4(2)',  G_2(3), G_2(4)\}.
\]
For $G \not\in \mathcal{E}$, Weigel identifies an element $s \in G$ with $|\mathcal{M}(G,s)|=1$, with the possible exception of the groups $F_4(2^a)$ and $G_2(3^a)$ (with $a \geqs 2$), where he finds an element $s$ with $|\mathcal{M}(G,s)|=2$. (In every case, $s$ is a generator of a maximal torus of $G$.) 

To complete the proof, we just need to handle the groups in $\mathcal{E}$. If $G$ is one of the first five groups in $\mathcal{E}$, then \cite[Proposition~6.2]{GK} implies that $\mu(G)=1$. If $G$ is one of the remaining four groups, then we can use \textsf{GAP} to compute $\mu(G)$ and find an element $s \in G$ with $|\M(G,s)|=\mu(G)$ (see Section~\ref{sss:comp_max}): 
\[
{\renewcommand{\arraystretch}{1.1}
\begin{array}{lcccc}
\hline
G      & F_4(2)       & ^{2}F_4(2)'  & G_2(3)       & G_2(4)       \\ 
\mu(G) & 2            & 2            & 3            & 1            \\
s      & \mathtt{17A} & \mathtt{16A} & \mathtt{13A} & \mathtt{21A} \\
\hline
\end{array}}
\]
This completes the proof.
\end{proof}

\subsection{Uniform domination number}\label{ss:ex_udn}

Our main result on the uniform domination number of exceptional groups is the following theorem, which proves Theorem~\ref{t:main4}(ii). Note that this also gives infinitely many more examples with $\gamma_u(G)=2$ (see Theorem~\ref{t:main1}).

\begin{thm}\label{t:ExceptionalUDN}
If $G$ is a finite simple exceptional group of Lie type, then $\gamma_u(G) \leqs 6$. Moreover, if $G \in \{{}^2B_2(q), {}^2G_2(q), E_8(q)\}$ then $\gamma_u(G) = 2$.
\end{thm}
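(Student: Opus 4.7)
The plan is to combine Theorem~\ref{t:ExceptionalMax} with the probabilistic machinery of Lemma~\ref{l:ProbMethod}, fed by the fixed point ratio estimates for exceptional groups of Lie type due to Liebeck, Liebeck and Seitz~\cite{LLS}. Theorem~\ref{t:ExceptionalMax} provides, for every exceptional $G$, an element $s$ lying in at most three maximal subgroups (and in just one or two outside a short explicit list), so the inner sum of $\widehat{Q}(G,s,c)$ has very few terms and can be controlled by Lemma~\ref{l:bd}.

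For the three families ${}^2B_2(q)$, ${}^2G_2(q)$ and $E_8(q)$ in the ``moreover'' clause, the proof of Theorem~\ref{t:ExceptionalMax} (via Weigel~\cite{Wei}) produces $s \in G$ generating a suitable maximal torus $T$ with $\mathcal{M}(G,s) = \{H\}$, where $H = N_G(T)$. By Corollary~\ref{c:CriterionBase} it suffices to show $b(G, G/H) = 2$, equivalently $\widehat{Q}(G,s,2) < 1$. Applying Lemma~\ref{l:bd} with $\ell = 1$ and $A = |H|$, this reduces to $|H|^2 < B$, where $B$ is a lower bound on $|x^G|$ for prime order $x$ meeting $H$. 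In each of the three cases $H$ normalises a small Zsigmondy torus (of order $q \pm \sqrt{2q} + 1$ for Suzuki, $q \pm \sqrt{3q} + 1$ for Ree, and a specific cyclic torus for $E_8(q)$), so $|H|$ is polynomial in $q$ of very low degree, while $B$ grows much faster in $q$; the required inequality follows easily, with any genuinely small residual values of $q$ checked computationally by the methods of Section~\ref{ss:comp}.

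For the general bound $\gamma_u(G) \leqs 6$, the groups $G_2(3)$ and ${}^2F_4(2)'$ (where $\mu(G) \in \{2,3\}$) are treated directly by the computational methods of Section~\ref{ss:comp}. For every other exceptional group, Theorem~\ref{t:ExceptionalMax} gives $s$ with $|\mathcal{M}(G,s)| \leqs 2$, and Lemma~\ref{l:bd} applied with $c = 6$ yields
\[
\widehat{Q}(G,s,6) \leqs B^{-5} \Bigl( \sum_{H \in \mathcal{M}(G,s)} |H| \Bigr)^{6}.
\]
The fpr bounds from \cite{LLS}, together with the Liebeck--Seitz description of maximal subgroups of exceptional groups, force this expression to be less than $1$ once $q$ exceeds an explicit threshold depending only on the Lie type of $G$. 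The finitely many remaining small values of $q$ are handled individually, typically via a sharper choice of $s$ (often a generator of a Coxeter or Zsigmondy torus) or by invoking known base size bounds for the associated primitive action.

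The hardest part of the argument will be the borderline cases $G = F_4(2^a)$ and $G = G_2(3^a)$ where $\mu(G) = 2$ rather than $1$, since here one cannot simply pass to a base size estimate for a single maximal subgroup and the probabilistic sum has to absorb two distinct families of maximal overgroups simultaneously. Choosing $s$ so that the centralisers of prime order elements in both overgroups remain small, and verifying the resulting estimate uniformly across all seven exceptional Lie types (twisted and untwisted) together with their residual small $q$, is the most delicate book-keeping in the proof, and is what ultimately determines the constant $6$ in the statement; one would expect that for several families, notably $E_7$, $E_6$ and $F_4$ in good characteristic, a sharper uniform constant is attainable with more refined fpr input.
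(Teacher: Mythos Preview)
Your overall strategy is correct and mirrors the paper's: use Theorem~\ref{t:ExceptionalMax} to find $s$ in few maximal subgroups, then feed this into Lemma~\ref{l:ProbMethod}. The treatment of ${}^2B_2(q)$, ${}^2G_2(q)$ and $E_8(q)$ is essentially the paper's argument (the paper quotes \cite[Lemmas~4.37, 4.39]{BLS} for the first two rather than redoing the estimate, but the content is the same). One shortcut you miss: when $\mu(G)=1$, the paper does not re-run any probabilistic estimate at all; it simply invokes Corollary~\ref{c:CriterionBase} together with the main theorem of \cite{BLS}, which already gives $b(G,G/H)\leqs 6$ for every primitive action of an exceptional group. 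This disposes of the vast majority of cases in one line.

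There is, however, a genuine gap in your plan for $F_4(2^a)$ and $G_2(3^a)$, precisely the cases you flag as hardest. The crude bound from Lemma~\ref{l:bd},
\[
\widehat{Q}(G,s,6)\leqs B^{-5}\Bigl(\sum_{H\in\mathcal M(G,s)}|H|\Bigr)^{6},
\]
fails badly here. For $G=F_4(q)$ with $q$ even the two overgroups are ${}^3D_4(q).3$ of order roughly $q^{28}$, while the smallest prime-order class (long root elements) has size only about $q^{16}$; the displayed quantity is then of order $q^{88}$, not less than $1$. The same obstruction occurs for $G_2(3^a)$ with $H\cong{\rm SU}_3(q).2$. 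Invoking the uniform fpr bounds of \cite{LLS} does not rescue this: a bound of the shape $\fpr(x,G/H)<q^{-c}$ applied uniformly still gives $\widehat{Q}(G,s,6)<|G|\cdot(2q^{-c})^6$, and no single value of $c$ valid for all classes makes this less than $1$ in $F_4$.

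What the paper actually does in these two cases is stratify the prime-order classes: it lifts the explicit pairs $(a_i,b_i)$ from the proofs of \cite[Lemmas~4.26 and 4.31]{BLS}, where $a_i$ bounds the total size of a block of classes and $b_i$ bounds the fpr on that block, and shows $\sum_i a_ib_i^6<2^{-6}$. The point is that large classes come with small fpr and vice versa, and one must exploit this trade-off block by block rather than through a single $(A,B)$ pair. Your sketch needs this refinement to go through.
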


\begin{proof}
First assume that $\mu(G) = 1$, and let $s \in G$ such that $\M(G,s) = \{H\}$. By combining Corollary~\ref{c:CriterionBase} and the main theorem of \cite{BLS}, we deduce that $\gamma_u(G) \leqs b(G,G/H) \leqs 6$. 

Next suppose that $G \in \{ {}^2B_2(q), {}^2G_2(q) \}$. By \cite{Wei}, there is an element $s \in G$ such that $\M(G,s) = \{H\}$ where $H = N_G(\la s \ra)$. (More precisely, $|s| = q+\sqrt{2q}+1$ if $G = {}^2B_2(q)$, and $|s|=q+\sqrt{3q}+1$ if ${}^2G_2(q)$.) In both cases, by applying \cite[Lemmas~4.37 and~4.39]{BLS}, we get $b(G,G/H) = 2$ and thus $\gamma_u(G) = 2$ as claimed. 

Now assume $G = E_8(q)$. Here we apply the probabilistic method from Lemma~\ref{l:ProbMethod} and we adopt the notation therein. Fix an element $s \in G$ of order $q^8+q^7-q^5-q^4-q^3+q+1$. By \cite[Section 4(j)]{Wei}, $\M(G,s) = \{H\}$ where $H = N_G(\la s \ra)$. Moreover, 
\[
|x^G \cap H|< |H| = 30(q^8+q^7-q^5-q^4-q^3+q+1)< q^{14}
\]
(see \cite[Theorem~5.2]{LSS}, for example) and $|x^G|>q^{58}$ for every element $x \in G$ of prime order (indeed, $|x^G|$ is minimal when $q$ is even and $x$ is a long root element). Therefore, by \cite[Proposition~2.3]{BLS},
\[
Q(G,s,2) \leqs \sum_{i=1}^{k}|x_i^G|\cdot \fpr(x_i,G/H)^2  <  q^{58}(q^{-44})^2 = q^{-30}<1
\]
and we conclude that $\gamma_u(G)=2$.

To complete the proof, it remains to handle the cases with $\mu(G) > 1$. In two cases we employ computational methods. Indeed, for 
\[ 
(G,s)\in \{ ({}^2F_4(2)', \mathtt{16A}), \, (G_2(3), \mathtt{13A})\}
\] 
we can use \textsf{GAP} to verify the bound $\widehat{Q}(G,s,5) < 1$ (see Section~\ref{sss:comp_prob}). By Lemma~\ref{l:ProbMethod}, this gives $\gamma_u(G) \leqs 5$.

Next assume $G = F_4(q)$ with $q = 2^a$. There is an element $s \in G$ with $\M(G,s) = \{ H, K \}$, where $H \cong K \cong {}^3D_4(q).3$ if $a>1$ (see \cite[Section~4(f)]{Wei}) and $H \cong K \cong {\rm Sp}_8(2)$ if $a=1$ (see \cite[Proposition~6.2]{GK}). Since $H$ and $K$ are ${\rm Aut}(G)$-conjugate, it follows that 
\begin{equation}\label{e:20}
\widehat{Q}(G,s,c) \leqs 2^c\sum_{i=1}^k|x_i^G|\cdot \fpr(x_i,G/H)^c
\end{equation}
in terms of the notation of Lemma~\ref{l:ProbMethod}.

Suppose $a=1$. The \textsf{GAP} Character Table Library contains the character tables of $G$ and $H$, so as described in Section~\ref{sss:comp_prob}, we can compute $\fpr(x,G/H)$ for all prime order elements $x \in G$. In this way, we deduce that $\what{Q}(G,s,5) < 1$ and thus $\gamma_u(G) \leqs 5$.  

Now assume $a>1$. By inspecting the proof of \cite[Lemma~4.26]{BLS}, we deduce that 
\[
\widehat{Q}(G,s,6) < 64\sum_{i=1}^{5}a_ib_i^6,
\]
where $a_i,b_i$ are defined as follows:
\[
{\renewcommand{\arraystretch}{1.1}
\begin{array}{lccccc} 
\hline
i   & 1      & 2       & 3       & 4       & 5        \\ 
a_i & q^{52} & 2q^{31} & 2q^{16} & 3q^{22} & q^{48}   \\
b_i & q^{-9} & q^{-6}  & 2q^{-5} & 2q^{-6} & 8q^{-12} \\ 
\hline
\end{array}}
\]
It is easy to check that this yields $\widehat{Q}(G,s,6)<1$ for all $q \geqs 16$. For $q \in \{4,8\}$, one checks that the value of $q^{-9}$ for $b_1$ can be replaced by $q^{-11}$ and this minor modification yields $\widehat{Q}(G,s,6)<1$. Therefore, $\gamma_u(G) \leqs 6$ as required.

A similar argument applies when $G = G_2(q)$ with $q = 3^a$ and $a \geqs 2$. As explained in \cite[Section 4(d)]{Wei}, there is an element $s \in G$ such that $\M(G,s) = \{H,K\}$ and $H \cong K \cong {\rm SU}_{3}(q).2$. In particular, \eqref{e:20} holds and one can check that 
\[
\widehat{Q}(G,s,6) < 64\sum_{i=1}^{4}a_ib_i^6 < 1,
\]
where the $a_i,b_i$ are given in the proof of \cite[Lemma~4.31]{BLS}, hence $\gamma_u(G) \leqs 6$.
\end{proof}


\section{Classical groups}\label{s:cla}

In this final section, we study the parameters $\mu(G)$ and $\gamma_u(G)$ when $G$ is a finite simple classical group. In particular, we complete the proofs of Theorems~\ref{t:main4} and~\ref{t:main5}. 

Throughout this section, we will write $r$ for the untwisted Lie rank of $G$ (that is, $r$ is the rank of the ambient simple algebraic group). Due to the existence of isomorphisms between certain low rank classical groups (see \cite[Proposition 2.9.1]{KL}, for example), we may (and will) assume that $G$ is one of the following:
\[
{\rm L}_{r+1}(q), \, r \geqs 1; \:\: {\rm U}_{r+1}(q), \, r \geqs 2; \:\:  {\rm PSp}_{2r}(q)', \, r \geqs 2; \:\:  {\rm P\Omega}^{\pm}_{2r}(q), \, r \geqs 4; \:\:  {\rm \Omega}_{2r+1}(q), \, r \geqs 3.
\]
In addition, we assume $q$ is odd if $G = \O_{2r+1}(q)$.

\subsection{Maximal overgroups}\label{ss:cla_max}

The main result of this section is the following theorem, which completes the proof of Theorem~\ref{t:main5}.

\begin{thm}\label{t:ClassicalMax}
If $G$ is a finite simple classical group, then either $\mu(G) \leqs 3$ or $(G,\mu(G))$ is one of the following:
\[
{\renewcommand{\arraystretch}{1.1}
\begin{array}{lcccc}
\hline
G      & {\rm U}_{6}(2) & {\rm U}_4(3) & \O_8^+(2) & {\rm P\O}_8^+(3) \\
\mu(G) & 4              & 5            & 7         & 7                \\
\hline
\end{array}}
\]
\end{thm}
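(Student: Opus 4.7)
The plan is to adapt, for classical groups, the strategy used by Guralnick and Kantor in \cite{GK}: for each family, exhibit an explicit element $s \in G$ (typically a regular semisimple element generating, or contained in, a maximal torus whose class is distinguished by divisibility of $|s|$ by a primitive prime divisor of $q^n-1$ for a well-chosen $n$) and then enumerate $\mathcal{M}(G,s)$ by applying Aschbacher's theorem on maximal subgroups of classical groups, as described in \cite{KL}. For example, when $G = {\rm L}_{r+1}(q)$, I would take $s$ of order $(q^{r+1}-1)/((q-1)(r+1,q-1))$ (a generator of a Singer-type cyclic subgroup); when $G = {\rm U}_{r+1}(q)$ with $r$ even, an element of order $(q^{r+1}+1)/((q+1)(r+1,q+1))$; and analogously for the symplectic and orthogonal families, choosing $s$ so that it acts irreducibly on the natural module (or on a hyperplane of codimension~$1$ or~$2$), using the dichotomy between the two orthogonal types to pick the torus so that only one of ${\rm P\Omega}_{2r}^{+}(q)$ and ${\rm P\Omega}_{2r}^{-}(q)$ can contain it.

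Having fixed $s$, the second step is to identify the Aschbacher classes $\mathcal{C}_1,\ldots,\mathcal{C}_8,\mathcal{S}$ that can possibly contain a maximal overgroup of $s$. The divisibility constraint on $|s|$ from Zsigmondy-type primes rules out most reducible, imprimitive and tensor-type subgroups; typical survivors are the torus normaliser itself, one or two field-extension overgroups in class $\mathcal{C}_3$, and possibly a subfield group in $\mathcal{C}_5$ or a classical subgroup in $\mathcal{C}_8$ when $q$ is a proper power or characteristic is small. Almost simple overgroups from $\mathcal{S}$ are ruled out, apart from a small explicit list, by the order constraint $|s|$ divides $|H|$ together with the known bounds on $|H|$ in $\mathcal{S}$ (e.g.\ \cite[Theorem 5.2]{LSS}) and the list of irreducible embeddings of almost simple groups with elements of large order. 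Carrying out this bookkeeping carefully, one finds $|\mathcal{M}(G,s)| \leqs 3$ in all but finitely many small configurations.

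The finitely many leftover configurations are those where the generic argument fails because of unusually rich families of $\mathcal{S}$-subgroups or unusually many conjugacy classes of field/subfield overgroups: precisely the groups ${\rm U}_6(2)$, ${\rm U}_4(3)$, $\Omega_8^+(2)$ and ${\rm P\Omega}_8^+(3)$, each of which is small enough to handle directly. For these four groups I would use the computational approach described in Section~\ref{sss:comp_max}: for a set of conjugacy class representatives $s$ of $G$ (available from \textsf{GAP}'s Character Table Library \cite{CTblLib}, since in each case all maximal subgroups and their character tables are available), compute
\[
|\mathcal{M}(G,s)| = \sum_{H} \fpr(s,G/H) \cdot |G:H|
\]
summed over representatives of the $G$-classes of maximal subgroups, and take the minimum over $s$. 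This yields the values $4, 5, 7, 7$ respectively, and exhibits explicit witnesses $s$ (e.g.\ an element of order $11$ in ${\rm U}_6(2)$, of order $7$ in ${\rm U}_4(3)$, and elements related to triality-invariant tori in the two $D_4$ cases).

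The hardest step will be the $\mathcal{S}$-analysis in low rank and small characteristic, where the gap between the theoretically predicted overgroups and the actual maximal subgroups is largest: this is exactly what produces the four exceptional groups, and in particular the triality automorphism of $D_4$ makes $\Omega_8^+(2)$ and ${\rm P\Omega}_8^+(3)$ genuinely anomalous because the three natural $8$-dimensional modules yield three conjugacy classes of $\Omega_7$-type, $A_9$-type and similar subgroups, each contributing overgroups of any regular semisimple element of appropriate order. Handling this last family is why the bound jumps from $3$ to $7$, and it is essential to verify these four exceptions computationally rather than attempt a uniform argument.
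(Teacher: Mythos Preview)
Your high-level strategy is right and matches the paper's: borrow the explicit elements from Guralnick--Kantor \cite{GK}, use primitive-prime-divisor constraints to bound $|\mathcal{M}(G,s)|$, and mop up a finite list of small groups by machine. The paper, however, does not redo the Aschbacher analysis; it simply cites the \emph{proofs} of \cite[Propositions~4.1, 5.x, 6.3]{GK} and \cite[Proposition~5.14]{BGK}, which already identify $\mathcal{M}(G,s)$ precisely, and reads off $|\mathcal{M}(G,s)| \leqs 3$ directly. So the bulk of what you describe as ``the hardest step'' is already in the literature and need not be reproved.

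There are two genuine gaps in your plan. First, your chosen elements are not the ones in \cite[Table~II]{GK}, and your choices do not work uniformly. For ${\rm L}_{r+1}(q)$ you propose a Singer cycle, but by Bereczky \cite{Ber} its maximal overgroups are the field-extension subgroups ${\rm GL}_{(r+1)/k}(q^k).k$, one for each prime $k$ dividing $r+1$; so whenever $r+1$ has four or more prime factors you get $|\mathcal{M}(G,s)| \geqs 4$, and this is an infinite family. Guralnick--Kantor instead take $s$ of shape $a \oplus b$ with $a+b=r+1$, $(a,b)=1$ and $a,b$ near $(r+1)/2$, so that $\mathcal{M}(G,s)$ consists of exactly two parabolics. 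The same issue arises for your symplectic and minus-type orthogonal Singer elements when the rank has many prime divisors.

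Second, and more seriously, you miss an entire infinite family that the GK elements do \emph{not} settle: ${\rm Sp}_{2r}(q)$ with $r \geqs 6$ even and $q$ even. Here every element lies in some $O_{2r}^{\pm}(q)$, and the GK analysis does not directly give $|\mathcal{M}(G,s)| \leqs 3$. The paper handles this with a separate lemma (Lemma~\ref{lem:SymplecticMax}): take $s=(r-2k)\perp(r+2k)$ with $k=(r/2-1,2)$, and run a GPPS-type argument \cite{GPPS} to show $\mathcal{M}(G,s)=\{{\rm Sp}_{r-2k}(q)\times{\rm Sp}_{r+2k}(q),\,O_{2r}^{+}(q)\}$. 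This is the main piece of new work in the proof and is absent from your outline. Finally, the residual computational list comprises six groups, not four (add $\Omega_7(3)$ and ${\rm Sp}_6(2)$, which turn out to satisfy $\mu\leqs 3$ but are not covered by the generic argument), and the witness element in ${\rm U}_4(3)$ has order~$9$, not~$7$.
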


In order to prove Theorem~\ref{t:ClassicalMax}, we need to introduce some additional notation and terminology, which will also be useful later in Section \ref{ss:cla_udn}. 

Let $G$ be a finite simple classical group over $\F$ with natural module $V$ of dimension $n$. We will write $k \oplus (n-k)$ to denote a decomposition $V = U \oplus W$, where $U$ and $W$ are totally singular subspaces of dimensions $k$ and $n-k$ (if $G = {\rm L}_n(q)$ then all subspaces are totally singular). In turn, $g = k \oplus (n-k)$ will denote a semisimple element $g \in G$ which preserves such a decomposition and acts irreducibly on both $U$ and $W$. Similarly, if $G \neq {\rm L}_n(q)$, then $k \perp (n-k)$ denotes an orthogonal decomposition $V = U \perp W$ where $U$ is a non-degenerate $k$-space, and we will write $g = k \perp (n-k)$ for an element in $G$ acting irreducibly on $U$ and $W$. For an orthogonal group, we extend this notation in the obvious way by writing $k^{\pm}$ to denote a non-degenerate $k$-space of type $\pm$ (with $k$ even). This is consistent with the notation used in \cite{BGK,GK}. Following \cite{KL}, we will sometimes refer to the \emph{type} of a maximal subgroup $H$ of $G$, which provides an approximate description of the group-theoretic structure of $H$.

Write $q=p^a$ for a prime $p$ and suppose $t$ is a prime divisor of $q^e-1$ for some  $e \geqs 2$. Recall that $t$ is a \emph{primitive prime divisor} (ppd for short) of $q^e-1$ if $t$ is not a divisor of $q^{i}-1$ for all $1 \leqs i <e$. A classical theorem of Zsigmondy \cite{Zsig} states that if $e \geqs 3$ then $q^e-1$ has a ppd unless $(q,e)=(2,6)$. Primitive prime divisors also exist when $e=2$, provided $q$ is not a Mersenne prime. Note that if $t$ is a ppd of $q^e-1$ then $t \equiv 1 \imod{e}$. 

The following lemma establishes a special case of Theorem \ref{t:ClassicalMax}.

\begin{lem}\label{lem:SymplecticMax}
Suppose $G={\rm Sp}_{2r}(q)$, where $r \geqs 6$ is even and $q$ is even. Let 
\[ 
s = (r-2k) \perp (r+2k) \in G,
\] 
where $k=(r/2-1,2)$. Then
\[
\M(G,s) = \{{\rm Sp}_{r-2k}(q) \times {\rm Sp}_{r+2k}(q), O^+_{2r}(q) \}
\]
and thus $\mu(G) \leqs 2$.
\end{lem}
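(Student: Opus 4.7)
The proof splits into two tasks: (a) show that $s$ lies in both listed subgroups and that both are maximal in $G$, and (b) show that no other maximal subgroup of $G$ contains $s$. I would model the argument on the analogous ppd analyses carried out in \cite{GK,BGK}.

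For (a), let $H_1 = {\rm Sp}_{r-2k}(q) \times {\rm Sp}_{r+2k}(q)$ and $H_2 = O^+_{2r}(q)$. Maximality of $H_1$ (a $\C_1$-subgroup stabilising a non-degenerate decomposition into summands of distinct dimensions) and of $H_2$ (a $\C_8$-subgroup in even characteristic) is recorded in \cite{KL}. The containment $s \in H_1$ is immediate from the definition of $s$. For $s \in H_2$ I would use that in even characteristic, an element of ${\rm Sp}_{2j}(q)$ acting irreducibly on $\F^{2j}$ has order dividing $q^j+1$ and is conjugate into $O^-_{2j}(q)$ (which contains a cyclic Singer-type torus of order $q^j+1$, whereas $O^+_{2j}(q)$ does not). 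Hence $s|_U \in O^-_{r-2k}(q)$ and $s|_W \in O^-_{r+2k}(q)$, and since the orthogonal type is multiplicative under orthogonal direct sums, $O^-_a(q) \perp O^-_b(q) \leqs O^+_{a+b}(q)$, so $s \in O^+_{2r}(q) = H_2$. Observe that the choice $k = (r/2-1,2)$ forces both $(r-2k)/2$ and $(r+2k)/2$ to be odd, which will make the ppd argument below clean.

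For (b), I would apply Aschbacher's theorem. Let $t_1$ be a primitive prime divisor (ppd) of $q^{r-2k}-1$ and $t_2$ a ppd of $q^{r+2k}-1$; both exist since $r \pm 2k \geqs 4$ (the Zsigmondy exception $(q,e) = (2,6)$ will be treated by direct inspection), both exceed $r$, and both divide $|s|$. Working through the geometric Aschbacher classes of ${\rm Sp}_{2r}(q)$: $\C_1$ yields only $H_1$ by the irreducibility of $s$ on each of $U$ and $W$ together with their distinct dimensions; $\C_2$ is incompatible with having two irreducible summands of unequal dimension; $\C_3$ would force some $e \geqs 2$ dividing both $r-2k$ and $r+2k$, hence $e \mid 4k \leqs 8$, and since $(r \pm 2k)/2$ is odd this leaves only $e=2$, which is then ruled out by inspecting the $\F_{q^2}$-module structure induced on each irreducible summand; $\C_4$, $\C_6$ and $\C_7$ are ruled out by orders, since tensor and extraspecial-normaliser structures cannot accommodate two large ppds of distinct orders; $\C_5$ subfield subgroups cannot contain either $t_i$; and $\C_8$ leaves only $O^{\pm}_{2r}(q)$, with the sign forced to be $+$ by part~(a).

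The main obstacle is the class $\ms$ of almost simple irreducibly embedded subgroups. Here I would invoke the Guralnick--Penttila--Praeger--Saxl classification of subgroups of classical groups containing a ppd element: using that $|s|$ is divisible by ppds corresponding to the two distinct large exponents $r-2k$ and $r+2k$, inspection of their list rules out every $\ms$-type overgroup for $r \geqs 6$, which completes the argument and gives $\M(G,s) = \{H_1, H_2\}$.
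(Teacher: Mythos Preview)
Your overall strategy---use Aschbacher's theorem together with the GPPS classification to pin down the maximal overgroups of a ppd element---is the same as the paper's, and most of the individual steps are fine. However, there is one genuine gap: you never establish that $s$ lies in a \emph{unique} conjugate of $O^+_{2r}(q)$. Your part~(a) shows that $s$ lies in \emph{some} $O^+_{2r}(q)$, and your $\C_8$ analysis shows that any orthogonal overgroup must have plus type, but $G$ has $|G:H_2|$ conjugates of $H_2$, and a priori $s$ could lie in several of them. The lemma asserts $|\M(G,s)| = 2$, so this multiplicity must be computed. The paper does this by checking that $s^G \cap H_2 = s^{H_2}$ (conjugacy of odd-order semisimple elements in both $G$ and $H_2$ is determined by eigenvalues) and that $|C_G(s)| = (q^{r/2+k}+1)(q^{r/2-k}+1) = |C_{H_2}(s)|$, whence ${\rm fpr}(s,G/H_2)\cdot|G:H_2| = 1$.

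Two smaller points. First, your claim that $r \pm 2k \geqs 4$ is false when $r=6$, $k=2$, where $r-2k=2$; a ppd of $q^2-1$ still exists here because $q$ is even, but the statement needs adjusting. Second, your $\C_3$ argument (``$e \mid 4k$, and since $(r\pm 2k)/2$ is odd only $e=2$ survives, then rule that out by hand'') is more convoluted than necessary: the paper simply observes that a prime $l$ with $s \in {\rm Sp}_{2r/l}(q^l)$ would have to divide both $r/2-k$ and $r/2+k$, which are coprime. Otherwise your Aschbacher-class walkthrough and the paper's GPPS walkthrough are essentially the same argument in different packaging.
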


\begin{proof}
Write $V = U \perp W$, where $U$ and $W$ are the proper non-degenerate subspaces preserved by $s$. First observe that the order of $s$ is divisible by a ppd $t$ of $q^{r+2k}-1$, so we are in a position to apply the main result of \cite{GPPS} to determine the subgroups in $\M(G,s)$. Following the notation of \cite{GPPS}, set $d=2r$ and $e = r+2k$. By the main theorem of \cite{GPPS}, every maximal overgroup of $s$ in $G$ is one of those listed in \cite[Examples 2.1--2.9]{GPPS}. We will consider each of these cases in turn.

Write $q=p^a$ and consider the classical groups arising in \cite[Example 2.1]{GPPS}. The element $s$ is not contained in any subfield subgroups since $t$ does not divide 
\[
|{\rm Sp}_{2r}(q_0)| = q_0^{r^2}\prod_{i=1}^{r} (q_0^{2i}-1)
\]
for $q_0=p^b$ and $b < a$. The orthogonal groups $O^{\pm}_{2r}(q)$ are the only other maximal subgroups that can arise in \cite[Example 2.1]{GPPS} (moreover, it is well known that every element in $G$ is contained in such a subgroup). First observe that if $s$ is contained in an orthogonal subgroup $H$ then the irreducibility of $s$ on $U$ and $W$ implies that both $U$ and $W$ are minus-type orthogonal spaces (with respect to the quadratic form corresponding to $H$), so $H = O_{2r}^{+}(q)$ is the only possibility. Moreover, we claim that $s$ is contained in exactly one such subgroup.  As noted in \cite[Table 3.5C]{KL}, $G$ contains a unique conjugacy class of subgroups $O^+_{2r}(q)$, so we just need to compute ${\rm fpr}(s,G/H)\cdot |G:H|$, which is the number of $G$-conjugates of $H$ containing $s$. Since conjugacy of semisimple elements of odd order in both $G$ and $H$ is determined by eigenvalues (in a suitable field extension of $\F$), it follows that $s^G \cap H = s^H$. Moreover, 
\[
|C_G(s)|=(q^{r/2+k}+1)(q^{r/2-k}+1)=|C_H(s)| 
\] 
and thus ${\rm fpr}(s,G/H)\cdot |G:H|=1$, as claimed.

The subgroups in \cite[Example 2.2]{GPPS} are reducible. Since $s$ acts irreducibly on both $U$ and $W$, it follows that the subspace stabiliser $G_U = {\rm Sp}_{r-2k}(q) \times {\rm Sp}_{r+2k}(q)$ is the only reducible maximal subgroup of $G$ containing $s$. No imprimitive subgroups arise from \cite[Example 2.3]{GPPS}. Since $q$ is even, the field extension subgroups in \cite[Example 2.4]{GPPS} have type ${\rm Sp}_{2r/l}(q^l)$ for a prime divisor $l$ of $r$. If $s$ is contained in such a subgroup, then $l$ must divide $r/2-k$ and $r/2+k$, but this is not possible because these numbers are coprime.

To complete the proof of the lemma, we need to show that there are no additional subgroups in $\M(G,s)$. To do this, we need to argue that none of the subgroups in \cite[Examples 2.5--2.9]{GPPS} can arise. 

Let us first observe that the conditions $r \geqs 6$ and $e=r+2k$ imply that if $t=e+1$ then $q=2$ and $r \in \{ 6, 8, 14, 16 \}$ (see \cite[Lemma 2.1(ii)]{GM}). Now \cite[Example 2.5]{GPPS} requires $t=e+1$ and $p$ odd, so no examples occur, and we can also rule out the cases in \cite[Examples 2.6(b,c) and 2.8]{GPPS} since $r \geqs 6$. In \cite[Example 2.6(a)]{GPPS} we have $t=e+1$, so $r \in \{ 6, 8, 14, 16 \}$ and $q=2$. Here $(G,H) = ({\rm Sp}_{2r}(2),S_{2r+2})$ and the embedding of $H$ in $G$ is afforded by the fully deleted permutation module for $H$ over $\mathbb{F}_2$. Since 
\[
|s| = {\rm lcm}\{2^{r/2-k}+1, 2^{r/2+k}+1\},
\]
we can easily rule out $r \in \{8,14,16\}$ by simply considering the orders of elements in $H$. Now assume $r=6$, so $|s|=33$ and $H$ has a unique class of elements of order $33$. We also note that $G$ has a unique conjugacy class of maximal subgroups isomorphic to $H$ (see \cite[Table 8.81]{BHR}). Finally, since $G$ has three classes of elements of order $33$ and type $2 \perp 10$, without any loss of generality we may assume that $\M(G,g)$ does not contain any subgroups isomorphic to $S_{14}$. 

Finally, the handful of cases with $r \geqs 6$ in \cite[Examples 2.7 and 2.9]{GPPS} are not compatible with our condition $e=r+2k$, unless $(r,q)=(6,2)$. In this case, the candidate maximal subgroup is almost simple with socle ${\rm L}_2(11)$, and this can be excluded since it does not contain an element of order $|s|=33$.  
\end{proof}

We are now ready to prove Theorem~\ref{t:ClassicalMax}.

\begin{proof}[Proof of Theorem~\ref{t:ClassicalMax}]
Let $G$ be a finite simple classical group over $\F$ with natural module $V$. First we appeal to the proof of the main theorem of \cite{GK}, which identifies an element $s \in G$ such that $\M(G,s)$ is small. 

If the rank $r$ of $G$ is large (for example, $r \geqs 11$ suffices), then this element is given in \cite[Table~II]{GK} and the remaining groups are covered in \cite[Section~5]{GK}, except for a short list of small groups which are handled in \cite[Proposition~6.3]{GK}. 
Moreover, additional information regarding the action of $s$ on $V$ is provided in \cite{GK}, which allows us to determine the precise subgroups in $\M(G,s)$. For example, if $G = {\rm P}\O^-_{2r}(q)$ where $r \geqs 7$ and $r \equiv 3 \imod{4}$, then following \cite[Table II]{GK} we choose 
\[
s = (r+1)^- \perp \left( \frac{r-1}{2} \oplus \frac{r-1}{2} \right).
\] 
By the proof of \cite[Proposition~4.1]{GK}, it follows that $\M(G,s) = \{ H, K_1, K_2 \}$ where $H$ has type $O^-_{r+1}(q) \times O^+_{r-1}(q)$ and both $K_1$ and $K_2$ are $P_{(r-1)/2}$ parabolic subgroups (that is, $K_1$ and $K_2$ are the stabilisers of totally singular subspaces of dimension $(r-1)/2$).
Similarly, \cite[Proposition 5.14]{BGK} implies that $\mu(G) \leqs 3$ if $G = {\rm P\O}_{2r}^{+}(q)$ and $r \geqs 4$ is even.

In this way, we deduce that $\mu(G) \leqs 3$, unless $G$ is one of the following:
\begin{itemize}\addtolength{\itemsep}{0.2\baselineskip}
\item[{\rm (a)}] ${\rm Sp}_{2r}(q)$ with $r \geqs 6$ even and $q$ even;
\item[{\rm (b)}] ${\rm P\O}^+_8(3), \, \O^+_8(2), \, \O_7(3), \, {\rm U}_6(2), \, {\rm Sp}_6(2), \, {\rm U}_4(3)$.
\end{itemize}

Case (a) was handled in Lemma~\ref{lem:SymplecticMax}. For the groups $G$ in (b), we can use \textsf{GAP} to determine $\mu(G)$ and to identify an element $s \in G$ with $|\M(G,s)|=\mu(G)$ (see Section~\ref{sss:comp_max}). We obtain the following results, in terms of the \textsc{Atlas} \cite{ATLAS} notation for conjugacy classes:
\[
{\renewcommand{\arraystretch}{1.1}
\begin{array}{lcccccc} 
\hline
G      & {\rm P\O}^+_8(3) & \O^+_8(2)     & \O_7(3)      & {\rm U}_6(2) & {\rm Sp}_6(2) & {\rm U}_4(3) \\
\mu(G) & 7                & 7             & 3            & 4            & 2             & 5            \\
s      & \texttt{14A}     & \texttt{15A}  & \texttt{14A} & \texttt{11A} & \texttt{15A}  & \texttt{9A} \\
\hline
\end{array}}
\] 
This proves the result.
\end{proof}

This completes the proof of Theorem~\ref{t:main5}.

\subsection{Uniform domination number}\label{ss:cla_udn}

Our main result is Theorem~\ref{t:ClassicalUDN}, which completes the proof of Theorem~\ref{t:main4}. In order to state this result, set 
\begin{align*}
\mathcal{A} & =\{ {\rm U}_{r+1}(q) \,:\, \text{$r \geqs 7$ odd} \} \cup \{ {\rm PSp}_{2r}(q) \,:\, \text{$r \geqs 3$ odd, $q$ odd} \} \cup \{ {\rm P\O}^+_{2r}(q) \,:\, \text{$r \geqs 5$ odd} \} \\
\mathcal{B} & = \{ {\rm Sp}_{2r}(q) \, :\, \text{$r \geqs 2$, $q$ even, $(r,q) \neq (2,2)$} \} \cup \{ \O_{2r+1}(q) \, :\, \text{$r \geqs 3$, $q$ odd} \}
\end{align*}

\begin{thm}\label{t:ClassicalUDN}
Let $G$ be a finite simple classical group of rank $r$. Then
\[
\gamma_u(G) \leqs 7r+56.
\]
More precisely, the following hold:
\begin{itemize}\addtolength{\itemsep}{0.2\baselineskip}
\item[{\rm (i)}]   If $G = {\rm L}_{2}(q)$, then $\gamma_u(G) \leqs 4$, with equality if and only if $q=9$. 
\item[{\rm (ii)}]  If $G \in \mathcal{A}$, then $\gamma_u(G) \leqs 15$.
\item[{\rm (iii)}] If $G \in \mathcal{B}$, then $r \leqs \gamma_u(G) \leqs 7r$.
\end{itemize}
\end{thm}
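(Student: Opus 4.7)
My overall strategy is to treat each family of classical groups separately, using the elements produced in the proof of Theorem~\ref{t:ClassicalMax}. For each $G$ and such an element $s$, I would invoke either Corollary~\ref{c:CriterionBase} (when $\M(G,s)=\{H\}$) or the probabilistic Lemma~\ref{l:ProbMethod} (when $|\M(G,s)|\geqs 2$). The overall bound $\gamma_u(G)\leqs 7r+56$ then follows by combining Parts~(i)--(iii) with analogous arguments for the remaining families (${\rm L}_{r+1}(q)$ with $r\geqs 2$, ${\rm U}_{r+1}(q)$ with $r$ even, ${\rm PSp}_{2r}(q)$ with $r$ even and $q$ odd, and the even-dimensional orthogonal groups of type $\varepsilon=-$), with the additive constant $56$ absorbing small-rank exceptions handled by direct computation.

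For Part~(i), I would first handle $q\leqs 11$ computationally, noting that ${\rm L}_2(9)\cong A_6$ gives $\gamma_u({\rm L}_2(9))=4$ via Proposition~\ref{t:altbd}, while the remaining small cases yield $\gamma_u\leqs 3$. For $q\geqs 13$ I would take $s$ of prime order dividing $(q+\e)/(2,q-1)$ for an appropriate $\e\in\{\pm 1\}$, and use Dickson's classification of the maximal subgroups of ${\rm L}_2(q)$ to show that $\M(G,s)=\{H\}$ with $H=N_G(\la s\ra)$ dihedral. A short argument gives $b(G,G/H)\leqs 3$, and Corollary~\ref{c:CriterionBase} yields $\gamma_u(G)\leqs 3$.

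For Part~(ii), the proof of Theorem~\ref{t:ClassicalMax} (extending \cite{GK}) provides for each family in $\mathcal{A}$ an element $s$ with $\M(G,s)=\{H\}$, where $H$ is typically the stabiliser of a decomposition of the natural module on which $s$ acts irreducibly on each summand. Corollary~\ref{c:CriterionBase} then gives $\gamma_u(G)\leqs b(G,G/H)$, and the base size bounds in \cite{B07}, \cite{BGS} and \cite{BLS} yield $b(G,G/H)\leqs 15$. For the upper bound in Part~(iii), the groups in $\mathcal{B}$ are exactly those for which the analogous candidate element $s$ lies in two maximal subgroups (as in Lemma~\ref{lem:SymplecticMax}: a reducible subgroup and an orthogonal overgroup). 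Fixing such an $s$, I would apply Lemma~\ref{l:ProbMethod} combined with Lemma~\ref{l:bd}, using the fixed point ratio bounds of \cite{Bur} and \cite{LSax} together with the standard lower bounds on $|x^G|$ for $x\in G$ of prime order, to show $\widehat{Q}(G,s,7r)<1$.

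For the lower bound $\gamma_u(G)\geqs r$ in Part~(iii), I would apply Remark~\ref{r:LowerBound}: for each conjugacy class representative $g\in G^{\#}$ I would produce a maximal overgroup $H$ of $g$ with $b(G,G/H)\geqs r$. A typical choice is a reducible subgroup (when $g$ has a proper nonzero invariant subspace) or a field extension subgroup (when $g$ acts irreducibly on the natural module), and in both cases $|G:H|$ is small enough that \eqref{e:lb} gives $b(G,G/H)\geqs r$ from $|G|\geqs q^{2r^2}$ and $|G:H|\leqs q^{2r}$. I expect the main obstacle to be the tight probabilistic estimate for the upper bound in Part~(iii): achieving linear rather than quadratic dependence on $r$ requires exploiting that $s$ is a primitive-prime-divisor element with very small centraliser, so that the sum $\sum_{H\in\M(G,s)}\fpr(x,G/H)$ can be controlled uniformly across the two explicit subgroups from Lemma~\ref{lem:SymplecticMax} and the factor $7r$ in the exponent forces $\widehat{Q}(G,s,7r)<1$.
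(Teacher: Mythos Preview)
Your plan has two genuine gaps.

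\textbf{Part (ii).} The subgroup $H$ in $\M(G,s)=\{H\}$ for $G\in\mathcal{A}$ is a \emph{subspace} stabiliser (the stabiliser of a non-degenerate subspace of dimension roughly half of $\dim V$). The references you invoke---\cite{B07}, \cite{BGS}, \cite{BLS}---give constant base-size bounds only for \emph{non-subspace} actions of classical groups, for symmetric groups, and for exceptional groups respectively; none of them says anything about $b(G,G/H)$ here. The paper instead uses the Halasi--Liebeck--Mar\'oti bound (Proposition~\ref{p:ben}), which gives $b(G,G/H)\leqs\lfloor n/k\rfloor+11$ for the stabiliser of a $k$-space; since $k$ is close to $n/2$ for the chosen $s$, this yields the constant bound. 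Without this ingredient your argument for (ii) does not go through.

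\textbf{Part (iii), lower bound.} Your dichotomy fails when $g\in{\rm Sp}_{2r}(q)$ is irreducible: a field extension subgroup $H$ of type ${\rm Sp}_{2r/k}(q^k)$ has index of order roughly $q^{2r^2(1-1/k)}$, not $q^{2r}$, so \eqref{e:lb} gives only $b(G,G/H)\approx k/(k-1)$, far short of $r$. The paper avoids this entirely: every element of ${\rm Sp}_{2r}(q)$ with $q$ even lies in some $O_{2r}^{\pm}(q)$, and every element of $\O_{2r+1}(q)$ (odd-dimensional, $q$ odd) fixes a $1$-space, so lies in a $P_1$ parabolic or an $O_{2r}^{\pm}(q)$-type subgroup. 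In each case $|G:H|\leqs q^{2r}$ and \eqref{e:lb} gives $b\geqs r$.

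\textbf{Part (iii), upper bound.} Your description of $\mathcal{B}$ as the groups where the Guralnick--Kantor element has two overgroups is inaccurate: for $\O_{2r+1}(q)$ one has $\M(G,s)=\{H\}$ with $H$ the stabiliser of a non-singular $1$-space, and the paper again applies Proposition~\ref{p:ben} to get $\gamma_u(G)\leqs 2r+12$. For ${\rm Sp}_{2r}(q)$ with $q$ even, the paper does \emph{not} use the element of Lemma~\ref{lem:SymplecticMax} (which is only defined for $r\geqs 6$ even and whose reducible overgroup would complicate the estimates), but rather an irreducible element $g$ of order $q^r+1$. Then $\M(G,g)$ consists of $O_{2r}^-(q)$ together with field extension subgroups, and a careful stratification by $\nu(x)$ (Lemmas~\ref{l:count}--\ref{l:bdd}) gives $\widehat{Q}(G,g,7r)<1$. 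Your proposed element would force you to control fixed point ratios for the action on cosets of ${\rm Sp}_{r-2k}(q)\times{\rm Sp}_{r+2k}(q)$, which is a subspace action with different behaviour.
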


Note that the conclusion in part~(iii) of Theorem~\ref{t:ClassicalUDN} still holds for $G={\rm Sp}_{4}(2)'$, but it will be convenient to exclude this group from $\mathcal{B}$. Indeed, 
${\rm Sp}_4(2)' \cong A_6$ and the proof of Proposition~\ref{t:altbd} gives $\gamma_u(A_6) = 4$. 

We will prove Theorem~\ref{t:ClassicalUDN} in a sequence of propositions.

\subsubsection{Special cases}
We start by handling the special cases referred to in parts (i), (ii) and (iii). Note that part (iii) shows that the uniform domination number of the groups in $\mathcal{B}$ can be arbitrarily large. It also shows that the linear bound in Theorem \ref{t:ClassicalUDN} is essentially best possible (up to constants).

\begin{prop}\label{p:psl2}
If $q \geqs 4$, then $\gamma_u({\rm L}_2(q)) \leqs 4$ with equality if and only if $q=9$.
\end{prop}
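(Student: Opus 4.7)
The strategy is to combine the base-size machinery of Corollary~\ref{c:CriterionBase} with direct computation. For all but finitely many $q$ I will exhibit an element $s \in G = \mathrm{L}_2(q)$ whose unique maximal overgroup is a dihedral torus normaliser $H$, reducing the problem to showing $b(G,G/H) \leqs 4$ (indeed $\leqs 3$ generically). A small list of remaining $q$ (including $q=9$, where the equality occurs) will be handled by the computational methods of Section~\ref{ss:comp}.

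\emph{Step 1 (choice of $s$).} Let $d = (2,q-1)$ and take $s \in G$ of order $n = (q+1)/d$. Then $\la s\ra$ is self-centralising and is contained in the normaliser $H = N_G(\la s\ra)$, which is dihedral of order $2n$ and maximal in $G$. Dickson's classification of maximal subgroups of $\mathrm{L}_2(q)$ (see \cite[Chapter~6]{BHR}) shows that, provided $n$ has a prime divisor $r > 5$ that is coprime to $q-1$ and to $|\mathrm{L}_2(q_0)|$ for every proper subfield $\F_{q_0} \subset \F$, the only maximal subgroup of $G$ containing $s$ is $H$. The finite set of $q$ for which no such $r$ exists (essentially when $n$ is built from small primes or lies in a subfield pattern) will be absorbed into Step~3.

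\emph{Step 2 (base size bound).} With $\M(G,s) = \{H\}$, Corollary~\ref{c:CriterionBase} gives $\gamma_u(G) \leqs b(G,G/H)$. The coset space $G/H$ can be identified with the set of unordered pairs of Galois-conjugate points of $\mathbb{P}^1(\F_{q^2}) \setminus \mathbb{P}^1(\F_q)$ (or, for the split-torus variant, with the set of unordered pairs in $\mathbb{P}^1(\F_q)$). Since $\mathrm{PGL}_2(q)$ acts sharply $3$-transitively on $\mathbb{P}^1(\F_q)$, any non-trivial element of $G$ fixing three such pairs setwise must be an involution lying in three distinct dihedral tori simultaneously; a direct count shows that three pairs in general position admit no such common involution as soon as $q$ is sufficiently large, so $b(G,G/H) \leqs 3$. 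Hence $\gamma_u(G) \leqs 3$ for all but finitely many $q$.

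\emph{Step 3 (small $q$, and the equality $q=9$).} For the finite list of $q$ not covered by Steps~1--2 (say $q \leqs 27$), I will compute $\gamma_u(\mathrm{L}_2(q))$ directly using the implementations described in Section~\ref{ss:comp}: the probabilistic estimate of Lemma~\ref{l:ProbMethod} (via \textsf{GAP} character tables) to obtain upper bounds, and the criterion in Remark~\ref{r:LowerBound} combined with random/exhaustive search in \textsc{Magma} to obtain matching lower bounds. The case $q=9$ is special: since $\mathrm{L}_2(9) \cong A_6$, Proposition~\ref{t:altbd} (applied to $n=6$) already gives $\gamma_u(\mathrm{L}_2(9)) = 4$, yielding the asserted equality. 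For every other small $q$ the computation will return $\gamma_u(G) \leqs 3$.

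\emph{Main obstacle.} The delicate points are (a) ensuring that $|\M(G,s)| = 1$ uniformly, which requires ruling out overgroups among the small exceptional maximal subgroups $A_4, S_4, A_5$ and the subfield groups $\mathrm{PSL}_2(q_0), \mathrm{PGL}_2(q_0)$; and (b) proving $b(G,G/H) \leqs 3$ for the dihedral action, which amounts to a geometric non-coincidence statement for triples of involutions in $\mathrm{PGL}_2(q)$. If either step fails for some $q$ outside the finite exceptional list, I will fall back to the split-torus element of order $(q-1)/d$, which supplies the same analysis with the roles of $q+1$ and $q-1$ reversed; the two choices together leave only a short list of residual $q$ for Step~3.
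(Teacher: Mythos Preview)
Your plan is essentially the paper's proof: choose $s$ of order $(q+1)/d$, observe that its unique maximal overgroup is the dihedral torus normaliser $H$, bound $\gamma_u(G)$ by $b(G,G/H)$ via Corollary~\ref{c:CriterionBase}, and handle small $q$ (including $q=9$, where indeed $\mathrm{L}_2(9)\cong A_6$) computationally. The paper is simply more economical: rather than re-deriving $|\M(G,s)|=1$ from Dickson's list and arguing geometrically for $b(G,G/H)\leqs 3$, it cites \cite[Section~5]{GK} for the first fact and \cite[Lemma~4.5]{B07} for the second, with the computational cutoff at $q<11$ rather than your tentative $q\leqs 27$; your hand-rolled Step~2 involution-counting sketch would work but is unnecessary given the existing literature.
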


\begin{proof}
For $q < 11$, the result can be verified computationally; see Section~\ref{sss:comp_prob}.
Now assume $q \geqs 11$. Set $d=(2,q-1)$ and fix an element $s \in G$ of order $(q+1)/d$. Then $\M(G,s) = \{ H \}$, where $H = N_G(\<g\>) \cong D_{2(q+1)/d}$ (see \cite[Section 5]{GK}). By combining Corollary~\ref{c:CriterionBase} and \cite[Lemma 4.5]{B07}, we conclude that $\gamma_u(G) \leqs b(G,G/H) \leqs 3$.
\end{proof}

In order to prove the bound in part (ii) of Theorem~\ref{t:ClassicalUDN}, we need the following recent result of Halasi, Liebeck and Mar\'{o}ti \cite[Theorem 3.3]{HLM} on the base sizes of subspace actions of classical groups.

\begin{prop}\label{p:ben}
Let $G$ be a finite simple classical group with natural module $V$ of dimension $n$. Let $H$ be the stabiliser of a $k$-dimensional subspace of $V$ with $k \leqs n/2$ and assume $H$ is a maximal subgroup of $G$. Then  
\[
b(G,G/H) \leqs \left\lfloor \frac{n}{k} \right\rfloor +11. 
\]
\end{prop}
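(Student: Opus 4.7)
The plan is to construct an explicit base of size at most $\lfloor n/k \rfloor + 11$ consisting of well-chosen $k$-dimensional subspaces of $V$ (of the appropriate type for the form on $V$). Recall that a subset $B \subseteq G/H$ is a base precisely when the corresponding family of $k$-subspaces has trivial pointwise stabiliser in $G$, so the goal is to find $\lfloor n/k \rfloor + 11$ such subspaces whose joint stabiliser is trivial.

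First I would dispose of the extreme cases. When $k$ is close to $n/2$, the relevant primitive action is non-standard in the sense of \cite{BLS} and the bound $b(G,G/H) \leqs 7$ from the main theorem of \cite{BLS} already suffices, so the interesting range is when $m \coloneqq \lfloor n/k \rfloor$ is large. In this range, my first step would be to choose $m$ subspaces $U_1,\dots,U_m$ of dimension $k$ (and of the required isometry type) such that $U_1 + \dots + U_m = V$ with the most generic possible intersection pattern: a direct sum decomposition whenever $n=mk$, and a direct sum plus a carefully chosen "overlap" subspace when $n > mk$. The pointwise stabiliser $L$ of $\{U_1,\dots,U_m\}$ in $G$ is then contained in the stabiliser of each $U_i$. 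In the linear case this forces $L$ into a Levi-type block-diagonal subgroup $\prod_i {\rm GL}(U_i)$; in the classical case it forces $L$ into a product of smaller classical groups determined by how the form restricts to each $U_i$ (or, when the $U_i$ are totally singular, into the Levi factor of a parabolic subgroup, acting on paired blocks via $g \mapsto (g,{}^t g^{-1})$).

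The second step is to eliminate this block-diagonal $L$ by adding a bounded number of further $k$-subspaces. The idea is to pick subspaces that project non-trivially onto several blocks simultaneously, so that fixing them setwise forces each block-diagonal component to act as a scalar, and a final auxiliary subspace then forces all these scalars to be equal and trivial. Over $\F$ one can do this with a constant number of "diagonal" subspaces per pair of blocks, and by exploiting a clever graph-of-blocks argument (rather than doing this pairwise), a universal constant independent of $n$, $k$ and $q$ will suffice; a tight book-keeping yields the value $11$.

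The main obstacle lies in executing the second step uniformly across all families: linear, unitary, symplectic and the two types of orthogonal groups, in all characteristics (including the delicate cases $p\in\{2,3\}$ where the behaviour of forms and scalars changes), for all $k \leqs n/2$, and at the boundary where $n=mk$ versus $n>mk$. The combinatorics of totally singular versus non-degenerate configurations of $k$-subspaces differs subtly between the types, and ensuring that one can always find "diagonal" auxiliary subspaces of the prescribed isometry type requires separate treatment of each family. Once this case analysis is complete, $11$ emerges as the worst-case overhead across all types.
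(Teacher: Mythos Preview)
The paper does not prove this proposition at all: it is quoted verbatim as \cite[Theorem 3.3]{HLM} (Halasi, Liebeck and Mar\'{o}ti), with no argument given. Your proposal is therefore not a comparison target but an attempt to reconstruct the proof of that external result. The overall shape you describe --- choose roughly $\lfloor n/k\rfloor$ subspaces spanning $V$, then a bounded number of additional ``diagonal'' subspaces to kill the residual block-diagonal stabiliser --- is indeed the strategy in \cite{HLM}, but what you have written is a plan rather than a proof: the assertion that ``a tight book-keeping yields the value $11$'' is precisely the content of the theorem, and you have not carried out any of it.

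There is also a genuine error in your sketch. Subspace stabilisers are exactly the \emph{standard} actions that are excluded from the main theorem of \cite{BLS}; that result applies only to non-subspace (irreducible-type) maximal subgroups. So you cannot dispose of the case $k$ close to $n/2$ by invoking the bound $b(G,G/H)\leqs 7$ from \cite{BLS}. In \cite{HLM} this range is handled by the same explicit construction as the generic case, not by appeal to \cite{BLS}.
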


\begin{prop}\label{p:ClassicalConstant}
If $G \in \mathcal{A}$, then $\gamma_u(G) \leqs 15$.
\end{prop}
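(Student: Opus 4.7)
The plan is to establish, for each $G \in \mathcal{A}$, the existence of an element $s \in G$ with $\M(G,s)$ consisting of a single subspace stabiliser $H$, so that Corollary \ref{c:CriterionBase} reduces the problem to bounding $b(G, G/H)$, and Proposition \ref{p:ben} then delivers the required bound.

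For each of the three families in $\mathcal{A}$, I would select $s$ following the paradigm of \cite[Section 5]{GK}: take $s$ to be a semisimple element preserving a decomposition $V = U \oplus W$ (in the unitary case, with $U$, $W$ totally isotropic) or $V = U \perp W$ (in the symplectic and orthogonal cases), acting irreducibly on each summand, where $\dim U$ and $\dim W$ are chosen to be coprime (or at least to share no common prime divisors relevant to the ppd analysis) with $\dim U \geqs n/4$ roughly. The order of $s$ will then be divisible by a primitive prime divisor of $q^e - 1$ for suitably large $e$, and the main theorem of \cite{GPPS} will constrain the maximal overgroups of $s$ to a restricted collection. The crucial hypothesis that $r$ is odd in each case is what allows the decompositions to be chosen so that imprimitive and field-extension subgroups are ruled out, leaving $\M(G,s) = \{H\}$ where $H$ is the stabiliser of the summand $U$ (compare the argument in Lemma \ref{lem:SymplecticMax}, where an element stabilising a single decomposition of a symplectic group is analysed via \cite{GPPS}).

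Once $\M(G,s) = \{H\}$ is established, Corollary \ref{c:CriterionBase} gives $\gamma_u(G) \leqs b(G, G/H)$, and since $U$ is a subspace of dimension $k = \dim U$ with $k \geqs n/4$ by construction, Proposition \ref{p:ben} yields
\[
b(G, G/H) \leqs \lfloor n/k \rfloor + 11 \leqs 15,
\]
as required.

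The main obstacle I anticipate is the ppd analysis needed to confirm $\M(G,s) = \{H\}$ for each of the three families; this entails working through each case of \cite{GPPS} to rule out subfield, field extension, imprimitive, tensor product, tensor-induced and almost simple maximal overgroups, analogously to the treatment in Lemma \ref{lem:SymplecticMax}. A secondary obstacle is the handling of small parameter values, where primitive prime divisors may be absent (for instance when $(q,e) = (2,6)$, or when $q$ is a Mersenne prime and $e = 2$) or where \cite{GPPS} admits sporadic exceptions. In these residual cases I would either modify the choice of $s$ (for example trading one summand for another of different dimension), or fall back on the probabilistic machinery of Lemma \ref{l:ProbMethod}, verifying $\what{Q}(G,s,15) < 1$ using the fixed point ratio estimates of \cite[Section 3]{GK} and \cite{Bur} for subspace actions of classical groups.
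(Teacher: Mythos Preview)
Your proposal is correct and follows essentially the same approach as the paper: identify $s$ with $\M(G,s) = \{H\}$ for a subspace stabiliser $H$, then combine Corollary~\ref{c:CriterionBase} with Proposition~\ref{p:ben}. The paper, however, does not redo the ppd analysis via \cite{GPPS}; it simply takes $s$ from \cite[Table~II]{GK} and cites the proof of \cite[Proposition~4.1]{GK} for the conclusion $\M(G,s) = \{H\}$, so your anticipated ``main obstacle'' dissolves into a citation. Only one small case, $G = \O_{10}^{+}(2)$, is handled separately by direct computation (your fallback), and the paper in fact obtains the slightly sharper bound $\gamma_u(G) \leqs 13$ in the orthogonal case because the relevant $k$ is close to $n/2$.
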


\begin{proof}
First assume that $G = {\rm P\O}^+_{2r}(q)$ where $r \geqs 5$ is odd. For $G = \O^+_{10}(2)$ we choose $s = 2^{-} \perp 8^{-}$ as in \cite[Proposition 6.3]{GK} and a straightforward computation shows that $\gamma_u(G) \leqs 5$ (see Section~\ref{sss:comp_prob}). Now assume $G \neq \O^+_{10}(2)$. Following \cite[Table II]{GK}, fix an element $s = (r-1)^- \perp (r+1)^-$ in $G$. From the proof of \cite[Proposition 4.1]{GK}, it follows that $\M(G,s) = \{H\}$, where $H$ is a reducible subgroup of type $O^-_{r-1}(q) \times O^-_{r+1}(q)$. By Proposition~\ref{p:ben}, we have
\[ 
b(G, G/H) \leqs \left\lfloor \frac{2r}{r-1} \right\rfloor +11 \leqs 13
\]
and thus Corollary~\ref{c:CriterionBase} implies that $\gamma_u(G) \leqs b(G, G/H) \leqs 13$.  

The other groups $G \in \mathcal{A}$ are handled in a very similar fashion. In each case we choose $s \in G$ as in \cite[Table II]{GK}, noting that $\mathcal{M}(G,s)=\{H\}$ for some reducible subgroup $H$ (as before, this follows from the proof of \cite[Proposition 4.1]{GK}). Once again, the desired bound follows from Proposition~\ref{p:ben}, and it is worth noting that there are no special cases that require direct computation.
\end{proof}

Next we turn to the bounds in part (iii) of Theorem \ref{t:ClassicalUDN}. First we establish the  lower bound.

\begin{prop}\label{p:ClassicalLower}
If $G \in \mathcal{B}$, then $\gamma_u(G) \geqs r$.
\end{prop}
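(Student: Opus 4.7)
The plan is to apply Corollary~\ref{c:LowerBound} via Remark~\ref{r:LowerBound}, so the goal is to show that for every non-identity element $s \in G$ there exists some $H \in \M(G,s)$ with $b(G,G/H) \geqs r$; this immediately yields $\gamma_u(G) \geqs r$. As in Remark~\ref{r:LowerBound}, it suffices to do this as $s$ ranges over a set of representatives closed under taking conjugates and powers, and in particular one may assume $s$ has prime order.

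The key observation is that every $G \in \mathcal{B}$ admits a natural action on a $(2r+1)$-dimensional orthogonal module $V$: for $G = \O_{2r+1}(q)$ this is the defining representation, and for $G = {\rm Sp}_{2r}(q)$ with $q$ even I would invoke the exceptional isomorphism ${\rm Sp}_{2r}(q) \cong \O_{2r+1}(q)$ to produce such a module. I would then argue that every $g \in G$ fixes some $1$-dimensional subspace of $V$. Since $g$ preserves the bilinear form, the roots of its characteristic polynomial pair up as $\{\lambda, \lambda^{-1}\}$ with $\pm 1$ the only self-inverse values (just $+1$ in characteristic~$2$); the oddness of $\dim V = 2r+1$ together with the constraint $\det g = 1$, which forces the multiplicity of $-1$ to be even, implies that $+1$ occurs as an eigenvalue with odd multiplicity. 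So $g$ always has a $+1$-eigenvector.

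Fix such an eigenvector $v$ and let $L = \langle v \rangle$ and $H = G_L \in \M(G,g)$. If $L$ is non-singular then $H$ is of type $O^{\pm}_{2r}(q)$ (a $\mathcal{C}_1$-subgroup in the orthogonal setting, or the $\mathcal{C}_8$-subgroup $O^{\pm}_{2r}(q) < {\rm Sp}_{2r}(q)$ in the symplectic setting), while if $L$ is totally singular then $H$ is the maximal parabolic $P_1$; under the constraints defining $\mathcal{B}$, both are maximal subgroups of $G$ by \cite{KL}. The base size lower bound is then delivered by \eqref{e:lb}: a direct order calculation yields
\[
\log_q |G| = 2r^2 + r + O(1), \qquad \log_q|G:H| \leqs 2r,
\]
so
\[
b(G,G/H) \geqs \frac{\log |G|}{\log |G:H|} \geqs r+1,
\]
comfortably achieving the required $b(G,G/H) \geqs r$.

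The main obstacle will be verifying carefully that every $g$ does admit a $1$-space fixed subspace of the required sort — specifically, for unipotent $g$, since a regular unipotent element (a single Jordan block of size $2r+1$) has a unique fixed line, which may well be totally singular. My plan handles this by defaulting to the parabolic alternative $H = P_1$, which still satisfies the numerical bound above. A secondary issue is the need to confirm the maximality of $H$ and the validity of the numerical estimate for small parameters (e.g.\ $r = 2$ or small $q$): these few cases can be checked directly by the computational methods of Section~\ref{ss:comp} or by inspection of the Kleidman--Liebeck tables.
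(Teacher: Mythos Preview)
Your argument for $G = \O_{2r+1}(q)$ with $q$ odd is correct and is exactly the paper's argument: every non-identity element fixes a $1$-space in the natural module, the stabiliser is either $P_1$ or of type $O_{2r}^{\pm}(q)$, and \eqref{e:lb} gives $b(G,G/H) \geqs r$.

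The gap is in the symplectic case. When $q$ is even, the $(2r+1)$-dimensional orthogonal module $V$ afforded by the isomorphism ${\rm Sp}_{2r}(q) \cong \O_{2r+1}(q)$ has a \emph{degenerate} bilinear form: its radical $R$ is a canonical $1$-dimensional subspace (spanned by a non-singular vector), and since $R$ is intrinsic to the form it is fixed by the entire group, so $G_R = G$. Your eigenvalue-pairing argument correctly shows that every $g$ has $1$ as an eigenvalue on $V$, but the guaranteed $1$-eigenvector is precisely the generator of $R$. For an element $g$ of order $q^r+1$ acting irreducibly on $V/R$ (such elements exist and are used elsewhere in the paper), the module $V$ decomposes as $R \oplus W$ over $\langle g \rangle$ (Maschke, as $|g|$ is odd), $g$ has no $1$-eigenvector in $W$, and hence the \emph{only} $g$-invariant $1$-space in $V$ is $R$. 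For such $g$ your construction yields $H = G_R = G$, not a maximal subgroup, and the argument collapses.

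The paper circumvents this by appealing instead to the well-known fact (due to Dye) that every element of ${\rm Sp}_{2r}(q)$ with $q$ even lies in some $O_{2r}^{\pm}(q)$ subgroup; the index bound $|G:H| \leqs q^r(q^r+1)$ then feeds into \eqref{e:lb} exactly as in your calculation. This is not the same statement as ``every element fixes a $1$-space in some natural module'', and your route via the odd-dimensional module does not recover it.
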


\begin{proof}
Suppose $G=\O_{2r+1}(q)$ and $1 \ne g \in G$. Then $g$ fixes a non-zero vector $v \in V$, where $V$ is the natural module. Therefore, $g$ is contained in the subspace stabiliser $H = G_{\la v \ra}$, which is a maximal subgroup of $G$. There are two possibilities: either $v$ is a singular vector, in which case $H$ is a $P_1$ parabolic subgroup of $G$, or $v$ is non-singular and $H$ is a subgroup of type $O^{\pm}_{2r}(q)$. In view of \eqref{e:lb}, we get 
\[ 
b(G, G/H) \geqs \left\lceil \frac{\log|G|}{\log|G/H|} \right\rceil \geqs  r 
\] 
in both cases. It follows that every non-identity element of $G$ is contained in a maximal subgroup $H$ with $b(G,G/H) \geqs r$, so Corollary~\ref{c:LowerBound} implies that $\gamma_u(G) \geqs r$. 

A very similar argument applies when $G = {\rm Sp}_{2r}(q)$ with $q$ even, using the fact that every element of $G$ is contained in an orthogonal subgroup $O_{2r}^{\pm}(q)$. We omit the details.
\end{proof}

\begin{prop}\label{p:B00}
If $G = \O_{2r+1}(q) \in \mathcal{B}$, then $\gamma_u(G) \leqs 2r+12 \leqs 7r$.
\end{prop}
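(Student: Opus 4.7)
The plan is to follow the strategy already used in Propositions~\ref{p:ClassicalConstant} and~\ref{t:ClassicalMax}: identify an element $s \in G$ with $\mathcal{M}(G, s) = \{H\}$ where $H$ is a reducible (Aschbacher $\mathcal{C}_1$) maximal subgroup, and then invoke Corollary~\ref{c:CriterionBase} together with the subspace base size bound in Proposition~\ref{p:ben}. Let $V$ be the natural $\mathbb{F}_q$-module of dimension $n = 2r+1$. Decomposing $V = U \perp W$ with $\dim U = 1$ non-degenerate and $\dim W = 2r$ of minus type, I would take $s \in G$ acting trivially on $U$ and as a generator of order $(q^r+1)/2$ of an irreducible (Singer) cyclic subgroup of $\O_{2r}^-(q)$ on $W$. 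Since $q$ is odd, Zsigmondy's theorem supplies a primitive prime divisor $t$ of $q^{2r}-1$, and $t$ is automatically odd and divides $|s|$.

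The first easy step is to check that the only proper non-zero $s$-invariant subspaces of $V$ are $U$ and $W$: the characteristic polynomial of $s$ factors as $(x-1)\cdot m(x)$ with $m(x) \in \mathbb{F}_q[x]$ irreducible of degree $2r$, so every $s$-invariant subspace is a direct sum of $s|_U$-invariant and $s|_W$-invariant subspaces. In particular, $s$ lies in no parabolic subgroup, and the unique $\mathcal{C}_1$-overgroup is $H = G_U = G_W$, a maximal subgroup of type $O_1(q) \times O_{2r}^-(q)$.

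The technical core, and the main obstacle, is ruling out all remaining Aschbacher classes. Since $|s|$ is divisible by the large primitive prime divisor $t$ of $q^{2r}-1$, the main theorem of \cite{GPPS} restricts the possibilities severely, and each class can then be eliminated by arguments directly parallel to those in the proof of \cite[Proposition~4.1]{GK} and in Lemma~\ref{lem:SymplecticMax}: imprimitive subgroups by the $\mathbb{F}_q$-irreducibility of $s|_W$; field extension and subfield subgroups by divisibility obstructions coming from $t$ and the dimension $2r+1$; tensor product and extraspecial-normaliser subgroups by order and dimension comparisons; and the almost simple $\mathcal{S}$-type candidates by a short check against the list in \cite{GPPS}. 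A small number of low-rank or small-$q$ borderline configurations may need to be handled separately, either by inspection or by direct computation via the methods of Section~\ref{ss:comp}.

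Once $\mathcal{M}(G, s) = \{H\}$ is established, Corollary~\ref{c:CriterionBase} yields $\gamma_u(G) \leqs b(G, G/H)$, and Proposition~\ref{p:ben} applied with $k = 1$ and $n = 2r+1$ gives
\[
b(G, G/H) \leqs \left\lfloor \frac{2r+1}{1} \right\rfloor + 11 = 2r+12.
\]
Finally, since $r \geqs 3$, we have $2r+12 \leqs 7r$, completing the proof.
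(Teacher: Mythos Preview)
Your proposal is correct and follows essentially the same approach as the paper: both choose the element $s = 1 \perp (2r)^{-}$, establish that $\mathcal{M}(G,s) = \{H\}$ with $H$ the stabiliser of a non-singular $1$-space, and then combine Corollary~\ref{c:CriterionBase} with Proposition~\ref{p:ben} (with $k=1$) to obtain $\gamma_u(G) \leqs 2r+12$. The only difference is cosmetic: the paper simply cites \cite[Table~II]{GK} for the uniqueness of the maximal overgroup, whereas you sketch the underlying GPPS-style argument directly.
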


\begin{proof}
As in \cite[Table II]{GK}, fix a semisimple element $s = 1 \perp (2r)^{-}$ and note that 
$\mathcal{M}(G,s) = \{H\}$, where $H$ is the stabiliser of a non-singular $1$-space. By combining Corollary \ref{c:CriterionBase} and Proposition \ref{p:ben}, we deduce that
\[
r \leqs \gamma_u(G) \leqs b(G,G/H) \leqs 2r+12
\]
as required.
\end{proof}

To complete the proof of the bounds in parts (i), (ii) and (iii) in Theorem \ref{t:ClassicalUDN}, it remains to handle the groups $G = {\rm Sp}_{2r}(q) \in \mathcal{B}$. 

\subsubsection{Symplectic groups in even characteristic}

In this section we complete the proof of Theorem~\ref{t:ClassicalUDN} by showing that $\gamma_u(G) \leqs 7r$ for all $G = {\rm Sp}_{2r}(q) \in \mathcal{B}$. To do this, we require some preliminary lemmas and additional notation. Let $\bar{G} = {\rm Sp}_{2r}(K)$ be the ambient simple algebraic group over the algebraic closure $K$ of $\mathbb{F}_q$ and let $\bar{V}$ be the natural module for $\bar{G}$. For an element $x \in G$, we define $\nu(x)$ to be the codimension of the largest eigenspace of $x$ on $\bar{V}$.

To establish the desired bound $\gamma_u(G) \leqs 7r$, we will work with an element $g \in G$ of order $q^r+1$. This allows us to appeal to earlier work of Bereczky \cite{Ber} to determine the maximal overgroups of $g$ (see Lemma \ref{l:bdd}(i) for $r \geqs 5$) and we then establish upper bounds on the relevant fixed point ratios (see Lemmas \ref{l:obound} and \ref{l:bdd}(ii)). Finally, we use Lemma \ref{l:ProbMethod} to establish the required bound on $\gamma_u(G)$; the groups with $r \geqs 5$ are handled in Proposition \ref{p:bdd1}, with the remaining cases treated in Proposition \ref{p:bdd2}.

\begin{lem}\label{l:count}
Suppose $G = {\rm Sp}_{2r}(q) \in \mathcal{B}$. For $s \in \{1,\ldots, 2r-1\}$, let $N_s$ be the number of elements $x \in G$ of prime order with $\nu(x)=s$. Then
\[
N_s < q^{\frac{1}{2}(4rs-s^2+3s+5)}.
\]
\end{lem}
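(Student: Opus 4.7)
The plan is to partition the prime order elements of $G$ into involutions (of order $p=2$) and semisimple elements of odd prime order, and then bound both the number of $G$-classes with $\nu(x)=s$ and the maximum class size $|x^G|$ in each case.

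For a semisimple element $x$ of odd prime order $t$, diagonalising $x$ over $K$ yields the decomposition
\[
\bar V = V_1 \perp \bigoplus_{i=1}^{k}\left(V_{\lambda_i} \oplus V_{\lambda_i^{-1}}\right),
\]
where the $\lambda_i$ are the distinct non-trivial $t$-th roots of unity occurring as eigenvalues. Here $V_1$ is non-degenerate of even dimension $d_0$, and each $V_{\lambda_i}$ is totally singular of dimension $d_i = \dim V_{\lambda_i^{-1}}$. Consequently,
\[
C_{\bar G}(x)^{\circ} \cong {\rm Sp}_{d_0}(K) \times \prod_{i=1}^{k} {\rm GL}_{d_i}(K),
\]
so that $\dim x^{\bar G} = r(2r+1) - \tfrac{1}{2}d_0(d_0+1) - \sum_{i=1}^{k} d_i^2$. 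When the largest eigenspace is $V_1$, so $d_0 = 2r-s$ (which forces $s$ even) and $\sum d_i = s/2$, the inequality $\sum d_i^2 \geqs s/2$ yields the clean bound $\dim x^{\bar G} \leqs 2rs - \tfrac{1}{2}s^2$. When instead the largest eigenspace is some $V_{\lambda_j}$ (which forces $s \geqs r$), the ${\rm GL}_{2r-s}$-factor in the centraliser gives an even smaller value, so the same inequality holds uniformly. The standard estimate for orders of finite groups of Lie type then gives $|x^G| \leqs 2q^{\dim x^{\bar G}} \leqs 2q^{2rs - s^2/2}$.

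For involutions $x \in G$ in even characteristic, I would appeal to the Aschbacher--Seitz classification of unipotent classes in symplectic groups: for each $s$ there are at most three $G$-classes with $\nu(x)=s$, and a direct calculation from their centralisers gives $|x^G| \leqs q^{2rs - s^2/2 + s}$. To count semisimple classes with $\nu(x)=s$, I would parameterise such a class by the multiset of pairs $(\lambda_i, d_i)$; since the multiplicities sum to at most $s/2$ and each $\lambda_i$ lies in $\mathbb{F}_{q^e}^{*}$ for some $e \leqs s$, there are at most $q^{s+1}$ such multisets. Combining the bounds on class size and class count yields
\[
N_s \leqs (q^{s+1} + 3) \cdot 2q^{2rs - s^2/2 + s} < q^{(4rs - s^2 + 3s + 5)/2},
\]
as required.

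The main obstacle will be to track constants sharply enough that the three sources of overhead --- the polynomial factor arising in the exact order formula for Chevalley groups, the count of semisimple classes of prescribed Jordan shape, and the up-to-three involution families --- fit simultaneously inside the $+3s/2 + 5/2$ slack in the exponent, rather than spilling over. A secondary delicate point is the case in which the largest eigenspace of $x$ is totally singular (so $s \geqs r$): here one must verify independently that the centraliser dimension is still large enough for the uniform bound $\dim x^{\bar G} \leqs 2rs - s^2/2$ to persist, so that the same class size estimate can be used in both cases without splitting the argument into sub-bounds that each waste part of the slack.
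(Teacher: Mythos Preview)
Your overall strategy---bounding the maximal class size and the number of classes separately, then multiplying---is sound, but the final displayed inequality is not correct as written. You have paired the \emph{semisimple} class-count bound $q^{s+1}$ with the \emph{involution} class-size bound $2q^{2rs-s^2/2+s}$, and the resulting exponent $2rs-\tfrac{1}{2}s^2+2s+1$ exceeds the target exponent $2rs-\tfrac{1}{2}s^2+\tfrac{3}{2}s+\tfrac{5}{2}$ for every $s\geqs 2$. The fix is easy: keep the two contributions separate. The at most three involution classes contribute at most $3q^{2rs-s^2/2+s}$, while the semisimple classes contribute at most $2q^{s+1}\cdot q^{2rs-s^2/2}$, and each of these sits comfortably inside the stated bound. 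You should also justify the semisimple class count more carefully; since a semisimple class in $G$ is determined by its characteristic polynomial, and when the $1$-eigenspace is largest the non-trivial factor is a self-reciprocal polynomial of degree $s$ over $\mathbb{F}_q$, one actually gets at most $q^{s/2}$ classes in that case, with the regime $s\geqs r$ requiring a separate (but similar) estimate.

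For comparison, the paper organises the count differently: rather than summing over conjugacy classes, it sums over primes $t$. For a fixed prime $t$ it quotes \cite[Corollary~3.38 and Proposition~3.40]{Bur2} to obtain $N_{s,t}<q^{(4rs-s^2+2s+4)/2}$, and then shows that any prime $t$ for which an element of order $t$ with $\nu(x)=s$ exists must satisfy $t\leqs q^{(s+1)/2}+1$, so that at most $q^{(s+1)/2}$ primes contribute. Your route has the merit of being self-contained (no appeal to the class-counting results of \cite{Bur2}); the paper's is shorter because the delicate estimates are already in the literature, and bounding the number of relevant primes is rather cleaner than bounding the number of conjugacy classes across all primes simultaneously.
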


\begin{proof}
Fix a prime $t$ and let $N_{s,t}$ be the number of elements $x \in G$ of order $t$ with $\nu(x)=s$. By combining \cite[Corollary 3.38 and Proposition 3.40]{Bur2}, we deduce that
\[
N_{s,t} < q^{\frac{1}{2}(s+1)} \cdot 2\left(\frac{q}{q-1}\right)^{\frac{s}{2}}q^{\frac{1}{2}(4rs-s^2+1)} \leqs q^{\frac{1}{2}(4rs-s^2+2s+4)}.
\]
To complete the argument, we need to show that there are at most $q^{(s+1)/2}$ possibilities for $t$. Suppose $t$ is odd and fix an element $x \in G$ of order $t$ with $\nu(x)=s$. Let $i \geqs 1$ be minimal such that $t$ divides $q^i-1$. Set $c=i$ if $i$ is even, otherwise $c=2i$. Note that $t \leqs q^{c/2}+1$, so it suffices to show that $c \leqs s+1$.

If $s<r$ then $s$ is even and the $1$-eigenspace of $x$ has dimension $2r-s$. Therefore, ${\rm Sp}_{s}(q)$ contains an element of order $t$, so $c \leqs s$ and the result follows. 
Now assume $s \geqs r$. Once again, if $G$ contains an element of order $t$ whose $1$-eigenspace is $(2r-s)$-dimensional, then $c \leqs s$ and we are done. If not, then $x$ must have a non-trivial eigenvalue (in $\mathbb{F}_{q^i}$) with multiplicity $2r-s$. Therefore, $(2r-s)c \leqs 2r$ and thus $c \leqs s+1$ as required.
\end{proof}

\begin{lem}\label{l:obound}
Suppose $G = {\rm Sp}_{2r}(q) \in \mathcal{B}$. Let $x \in G$ be an element of prime order with $\nu(x)=s$ and let $H<G$ be a maximal subgroup of type $O_{2r}^{\e}(q)$. Then 
\[
{\rm fpr}(x,G/H) \leqs \frac{1}{q^{s}} + \frac{1}{q^r-1}.
\]
\end{lem}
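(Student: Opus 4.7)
The plan is to reinterpret the fixed point ratio as a count of invariant quadratic forms. I identify $G/H$ with the set $\Omega_\varepsilon$ of non-degenerate quadratic forms of type $\varepsilon$ on the natural module $V$ whose polarisation equals the defining symplectic form $B$; under this identification, $|G/H|=|\Omega_\varepsilon|=\tfrac{1}{2}q^r(q^r+\varepsilon)$, and $|{\rm Fix}(x,G/H)|$ is precisely the number of $x$-invariant members of $\Omega_\varepsilon$. Since $\mathbb{F}_q$ is perfect of characteristic two, fixing a reference form $Q_0\in\Omega_\varepsilon$ allows every refinement of $B$ to be written uniquely as $Q_0+\ell^2$ for some $\ell\in V^*$, so the full set of refinements of $B$ (of either type combined) is an affine space over $\mathbb{F}_q$ of dimension $2r$, on which $G$ acts by affine maps. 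The $x$-invariant refinements therefore form either the empty set or a coset of $\ker T$, where $T\ell = \ell - \ell\circ x^{-1}$; this coset has size $q^{\dim C_V(x)}\leqs q^{2r-s}$.

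To split the $x$-invariant refinements between the two types, I decompose $V$ orthogonally (with respect to $B$) into $x$-indecomposable summands. Since $x$ has prime order in characteristic two, the summands are of a short list of standard shapes: hyperbolic pairs on which $x$ acts trivially, symplectic Jordan blocks for unipotent $x$ of Aschbacher--Seitz type $a$, $b$, or $c$, and pairs $\lambda\oplus\lambda^{-1}$ of dual eigenspaces over a suitable field extension when $x$ is semisimple of odd prime order. On each summand, the number of $x$-invariant quadratic refinements of $B|_{V_i}$ of each given type is an elementary closed-form expression depending only on $\dim V_i$ and the restriction of $x$. Since the Arf invariant of an orthogonal sum is the sum of the Arf invariants of the summands, the global count of $x$-invariant forms in $\Omega_\varepsilon$ factors as a sum, over sign sequences $(\varepsilon_1,\ldots,\varepsilon_k)$ with $\prod\varepsilon_i=\varepsilon$, of products of these local counts.

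Bounding this sum uniformly then gives an estimate of the form $|{\rm Fix}(x,G/H)|\leqs \tfrac{1}{2}q^{2r-s}+E$ with an error term of size $|E|=O(q^{2r-s}/(q^r-1))$ absorbing the worst-case type-imbalance across sign sequences. Dividing by $|\Omega_\varepsilon|=\tfrac{1}{2}q^r(q^r+\varepsilon)$ and applying the elementary inequality $q^r/(q^r+\varepsilon)\leqs 1+1/(q^r-1)$ then yields the claimed ${\rm fpr}(x,G/H)\leqs 1/q^s+1/(q^r-1)$. The main obstacle is the Arf-invariant bookkeeping in the second step: although the local counts on each indecomposable summand are routine, one must verify that in the worst case (an extreme sign imbalance, as already seen for transvections in ${\rm Sp}_2(q)$) the contribution above the leading term $\tfrac{1}{2}q^{2r-s}$ is absorbed by the additive correction $1/(q^r-1)$, rather than by a correction of order $1/q^s$ which the simpler bound $|{\rm Fix}(x,G/H)|\leqs q^{2r-s}$ would give.
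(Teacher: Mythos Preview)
Your approach is sound in outline but genuinely different from the paper's, and one quantitative claim needs correction.

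The paper does not count invariant quadratic forms at all. Instead it observes that for both odd-prime-order (semisimple) and order-$2$ (unipotent) elements one has $x^G\cap H=x^H$: for semisimple $x$ because conjugacy in both $G$ and $H$ is determined by eigenvalues, and for involutions because the Aschbacher--Seitz label is the same in $G$ and $H$. This gives ${\rm fpr}(x,G/H)=\tfrac{|H|}{|G|}\cdot\tfrac{|C_G(x)|}{|C_H(x)|}$, and the centralizer orders are read directly from the tables in \cite{Bur2}. For semisimple $x$ with $1$-eigenspace of dimension $e$ this yields the closed form $\tfrac{|H|}{|G|}\cdot\tfrac{|{\rm Sp}_e(q)|}{|O_e^{\varepsilon'}(q)|}$, which is then bounded; for involutions the explicit class sizes are quoted and the inequality checked type by type. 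So the paper's proof is a short table lookup, whereas yours is a self-contained geometric count.

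The gap in your plan is the size of the error term. Writing $N_\varepsilon$ for the number of $x$-invariant forms of type $\varepsilon$ and $N=N_++N_-$, your product formula gives $N_+-N_-=\prod_i D_i$ over an orthogonal $x$-decomposition, hence $|E|=\tfrac12\,\bigl|\prod_i D_i\bigr|$. For semisimple $x$ take $V=C_V(x)\perp C_V(x)^\perp$: on the $e$-dimensional fixed space $D_0=q^{e/2}$, and on the complement $D_1=\pm1$, so $|E|=\tfrac12 q^{e/2}$, which for $s=2$ equals $\tfrac12 q^{r-1}$, not $O(q^{2r-s}/(q^r-1))\approx q^{r-2}$ as you assert. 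The corrected bound $|E|\leqslant\tfrac12 q^{(2r-s)/2}$ still suffices, however: it gives precisely the paper's intermediate estimate ${\rm fpr}\leqslant q^{r-s/2}(q^{r-s/2}+1)/(q^r(q^r-1))$, and then $q^{-s}+q^{-s/2}\leqslant 1$ (valid for $s\geqslant 2$, which holds for semisimple $x$) finishes. For involutions the case $s=1$ escapes this inequality, but there a single $2$-dimensional Jordan block has $D_i=0$, forcing $E=0$; the remaining Aschbacher--Seitz types require a short separate check, comparable in length to the paper's direct computation.
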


\begin{proof}
Let $x \in H$ be an element of prime order $t$ with $\nu(x) = s$. For now, let us assume $t$ is odd, so $s \geqs 2$. Since two semisimple elements in $H$ are $H$-conjugate if and only if they have the same eigenvalues on $\bar{V}$, it follows that $x^G \cap H = x^H$ and thus
\[
{\rm fpr}(x,G/H) = \frac{|x^G \cap H|}{|x^G|} = \frac{|H|}{|G|}\frac{|C_G(x)|}{|C_H(x)|}.
\]
The centraliser orders $|C_G(x)|$ and $|C_H(x)|$ can be read off from \cite[Table 3.6]{Bur2} and we deduce that 
\[
{\rm fpr}(x,G/H) = \frac{|H|}{|G|}\frac{|{\rm Sp}_{e}(q)|}{|O^{\e'}_e(q)|},
\]
where $e \geqs 0$ is the dimension of the $1$-eigenspace of $x$ on $\bar{V}$ (if $e=0$ then we define ${\rm Sp}_{e}(q) = O^{\e'}_e(q) = 1$) and $\e'$ is a suitable choice of sign. Since $e \leqs 2r-s$ we deduce that
\[
{\rm fpr}(x,G/H) \leqs \frac{|H|}{|G|} \cdot \frac{1}{2}q^{r-s/2}(q^{r-s/2}+1) \leqs \frac{q^{r-s/2}(q^{r-s/2}+1)}{q^r(q^r-1)}
\]
and the desired bound quickly follows. 

Now assume $t=2$. Here we use the Aschbacher--Seitz \cite{AS} notation for involution class representatives, so $x = a_s$, $b_s$ or $c_s$. It is easy to see that the $G$-class and $H$-class of $x$ have the same label and thus $x^G \cap H = x^H$. The conjugacy class sizes $|x^H|$ and $|x^G|$ can be read off from the proof of \cite[Proposition 3.22]{Bur2} and the desired bound is easily established. For example, suppose $x=b_s$, in which case $s$ is odd. Here
\begin{align*}
|x^H| & = \frac{|O_{2r}^{\e}(q)|}{2|{\rm Sp}_{s-1}(q)||{\rm Sp}_{2r-2s}(q)|q^{2r(s-1)-3s^2/2+3s/2}} \\
|x^G| & = \frac{|{\rm Sp}_{2r}(q)|}{|{\rm Sp}_{s-1}(q)||{\rm Sp}_{2r-2s}(q)|q^{2rs-3s^2/2+s/2}}
\end{align*}
and thus
\[
{\rm fpr}(x,G/H) = \frac{1}{q^s}\left(1+\frac{\e}{q^r-\e}\right) \leqs \frac{1}{q^s}\left(1+\frac{1}{q^r-1}\right).
\]
The result follows.
\end{proof}

\begin{lem}\label{l:bdd}
Suppose $G = {\rm Sp}_{2r}(q) \in \mathcal{B}$ with $r \geqs 5$, and let $g \in G$ be an element of order $q^r+1$. 
\begin{itemize}\addtolength{\itemsep}{0.2\baselineskip}
\item[{\rm (i)}] $\M(G,g) = \{H, H_1, \ldots, H_{\ell}\}$, where $H = O_{2r}^{-}(q)$ and $H_i = {\rm Sp}_{2r/k}(q^k).k$ for some prime divisor $k$ of $r$ (one subgroup for each prime). 
\item[{\rm (ii)}] If $x \in G$ has prime order and $\nu(x) = s \geqs 3$, then 
\begin{equation}\label{eq:qq0}
\sum_{i=1}^{\ell} {\rm fpr}(x,G/H_i) < \frac{1}{q^{s}} + \frac{1}{q^r-1}.
\end{equation}
\end{itemize}
\end{lem}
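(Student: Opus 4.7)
The plan is to exploit the structure of $g$ as a Singer-type element. Since $r \geqs 5$, Zsigmondy's theorem guarantees a primitive prime divisor $t$ of $q^{2r}-1$, and any such $t$ must divide $|g|=q^r+1$; in particular $g$ is a regular semisimple element that acts irreducibly on the natural module $V$, arising as a generator of the cyclic maximal torus of order $q^r+1$ inside the embedded orthogonal subgroup $O_{2r}^{-}(q) < G$.

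For part (i), the irreducibility of $g$ on $V$ immediately rules out every reducible maximal subgroup of $G$. The remaining Aschbacher classes are treated by appealing to Bereczky's classification in \cite{Ber} of the maximal overgroups of Singer-type elements in symplectic groups, which yields precisely the orthogonal subgroup $H = O_{2r}^{-}(q)$ together with a field extension subgroup of type ${\rm Sp}_{2r/k}(q^k).k$ for each prime divisor $k$ of $r$: indeed, the Singer-type torus of order $(q^k)^{r/k}+1 = q^r+1$ naturally lives inside ${\rm Sp}_{2r/k}(q^k)$. Uniqueness within each conjugacy class of subgroups follows by a standard fixed-point-ratio count: conjugacy of semisimple elements is determined by eigenvalues, so $g^G \cap H_i = g^{H_i}$, and since $|C_G(g)| = |C_{H_i}(g)| = q^r+1$ we get $\fpr(g,G/H_i)\cdot |G:H_i| = 1$. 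The same argument applies to $H$.

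For part (ii), fix a prime divisor $k$ of $r$, set $H_k = {\rm Sp}_{2r/k}(q^k).k$, and let $x \in G$ have prime order with $\nu(x) = s \geqs 3$. The plan is to bound $\fpr(x,G/H_k)$ via the centraliser method developed in \cite[Chapter 3]{Bur2} and \cite{Bur}. For elements $x \in H_k$ lying in the ${\rm Sp}_{2r/k}(q^k)$-part, viewed on $\bar{V}$ as an $\mathbb{F}_{q^k}$-space of dimension $2r/k$, the associated local invariant satisfies $\nu_k(x) \geqs \lceil s/k \rceil$, and the standard centraliser estimate then yields a contribution of order $q^{-k \lceil s/k \rceil} \leqs q^{-s}$, with lower-order corrections of size $q^{-r}$. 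The contribution from prime-order elements that induce a nontrivial field automorphism on the $\mathbb{F}_{q^k}$-structure is controlled by the proportion of fixed $\mathbb{F}_q$-forms and can be absorbed into $1/(q^r-1)$. Summing the resulting bounds over the at most $\log_2 r$ primes dividing $r$, the dominant term remains $q^{-s}$ while the cumulative correction fits inside $1/(q^r-1)$.

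The principal obstacle is part (ii): the sum in \eqref{eq:qq0} runs over $\ell$ subgroups, where $\ell$ grows with $r$, so the bound on each individual $\fpr(x,G/H_k)$ must be strong enough to survive summation without slack. Care is required when $s=3$, where the target bound is already essentially tight, and when the prime order of $x$ coincides with a divisor $k$ of $r$, since such elements interact nontrivially with the $\mathbb{F}_{q^k}$-structure of $H_k$ and must be handled via a separate field-automorphism analysis. Keeping all of these contributions uniformly small, so that the final bound takes the clean form $1/q^s + 1/(q^r-1)$, is the technical heart of the argument.
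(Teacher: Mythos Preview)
Your treatment of part (i) is essentially the same as the paper's: both rely on Bereczky's classification \cite{Ber} (the paper cites it via \cite[Proposition 5.8]{BGK}), and your uniqueness argument via $\fpr(g,G/H_i)\cdot|G:H_i|=1$ is the standard one.

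For part (ii), however, you are making this much harder than necessary. Your plan is a case-by-case centraliser analysis inside each $H_k$, distinguishing elements according to whether they lie in the ${\rm Sp}_{2r/k}(q^k)$-part or induce a nontrivial field automorphism, and you correctly flag several delicate points (the $s=3$ case, elements whose order coincides with a prime divisor $k$ of $r$) without actually resolving them. The paper bypasses all of this in one stroke: since each $H_i$ is a field extension subgroup, hence a non-subspace subgroup, the main theorem of \cite{Bur} gives the uniform bound
\[
\fpr(x,G/H_i) < |x^G|^{-\frac{1}{2}+\frac{1}{2r}+\frac{1}{2r+2}}
\]
for every $i$ and every prime-order $x$, with no dependence on how $x$ sits inside $H_i$. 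Combining this with the lower bound $|x^G| > \alpha := \frac{1}{2}\bigl(\frac{q}{q+1}\bigr)q^{\beta}$ from \cite[Corollary 3.38]{Bur2}, where $\beta = s(2r-s)$ for $s<r$ and $\beta = rs$ for $s \geqs r$, together with the trivial estimate $\ell \leqs \log_2 r$, the entire sum is at most $\log_2 r \cdot \alpha^{-1/2+1/(2r)+1/(2r+2)}$, and one checks directly that this is below $q^{-s} + (q^r-1)^{-1}$ for all $r \geqs 5$ and $s \geqs 3$. No structural analysis of $x$ inside $H_i$ is required, and the obstacles you identify simply do not arise.
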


\begin{proof}
The description of the maximal overgroups in part (i) follows from the proof of \cite[Proposition 5.8]{BGK} (also see \cite{Ber}). Now consider (ii). By \cite[Corollary 3.38]{Bur2} we have $|x^G|>\a$, where
\begin{equation}\label{e:al}
\a = \frac{1}{2}\left(\frac{q}{q+1}\right)q^{\b},\;\; \b = \left\{\begin{array}{ll} s(2r-s) & s< r \\ rs & s \geqs r \end{array}\right.
\end{equation}
(since $q$ is even, we can replace the coefficient $\frac{1}{4}$ in \cite[Corollary 3.38]{Bur2} by 
$\frac{1}{2}$). Moreover, by the main theorem of \cite{Bur} we have
\[
{\rm fpr}(x,G/H_i) < |x^G|^{-\frac{1}{2}+\frac{1}{2r}+\frac{1}{2r+2}}
\]
for all $i$. Since $\ell \leqs \log_2r$, we deduce that
\[
\sum_{i=1}^{\ell} {\rm fpr}(x,G/H_i) < \log_2r \cdot \a^{-\frac{1}{2}+\frac{1}{2r}+\frac{1}{2r+2}}.
\]
In view of the conditions $r \geqs 5$ and $s \geqs 3$, one checks that this upper bound is less than $q^{-s}+(q^r-1)^{-1}$ and the result follows.
\end{proof}

\begin{prop}\label{p:bdd1}
If $G = {\rm Sp}_{2r}(q)\in \mathcal{B}$ and $r \geqs 5$, then $\gamma_u(G) \leqs 7r$.
\end{prop}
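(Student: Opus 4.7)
The plan is to apply Lemma~\ref{l:ProbMethod} to the element $g \in G$ of order $q^r + 1$ introduced just before Lemma~\ref{l:bdd}, taking $c = 7r$. By Lemma~\ref{l:bdd}(i), $\M(G, g) = \{H, H_1, \ldots, H_\ell\}$, where $H = O^{-}_{2r}(q)$ and each $H_i$ has the form ${\rm Sp}_{2r/k_i}(q^{k_i}).k_i$ for a distinct prime divisor $k_i$ of $r$, with $\ell \leqs \log_2 r$. The goal is to show that $\what{Q}(G, g, 7r) < 1$.

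I will partition the sum defining $\what{Q}(G, g, 7r)$ according to the value $s = \nu(x)$ for each prime-order class representative $x$. By Lemma~\ref{l:count}, the total contribution of classes with a fixed value of $s$ is bounded by $N_s < q^{(4rs - s^2 + 3s + 5)/2}$. For each such $x$, write $P(x) = \sum_{K \in \M(G,g)} \fpr(x, G/K)$. When $s \geqs 3$, Lemma~\ref{l:obound} gives $\fpr(x, G/H) \leqs q^{-s} + (q^r-1)^{-1}$, while Lemma~\ref{l:bdd}(ii) provides the same bound for $\sum_{i=1}^{\ell} \fpr(x, G/H_i)$; combining these yields $P(x) < 2(q^{-s} + (q^r-1)^{-1})$.

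The cases $s \in \{1, 2\}$ require separate treatment, since Lemma~\ref{l:bdd}(ii) is not available. The crucial observation is that any non-identity element of ${\rm Sp}_{2r/k}(q^k)$ has $\nu$-value at least $k$ when viewed as an element of $G$; in particular, transvections (the $s = 1$ case) lie in none of the field extension subgroups, so Lemma~\ref{l:obound} alone gives $P(x) \leqs q^{-1} + (q^r-1)^{-1}$. For $s = 2$ only a subgroup of the form ${\rm Sp}_{r}(q^2).2$ (which requires $r$ even) can contain $x$ in addition to $H$, and here $\fpr(x, G/H_i)$ is controlled by the main theorem of \cite{Bur} together with the lower bound $|x^G| \geqs q^{4r-4}$ coming from \cite[Corollary~3.38]{Bur2}.

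The remainder is a routine verification: raising each bound for $P(x)$ to the power $c = 7r$, multiplying by $N_s$, and summing over $s \in \{1, \ldots, 2r-1\}$, every term takes the form $q^{\alpha(r,s)}$ up to a small polynomial factor, with $\alpha(r,s)$ comfortably negative for all relevant $(r,s)$ when $r \geqs 5$. I expect the main obstacle to be the small-$\nu$ contributions, particularly transvections when $q = 2$, since this is precisely where the $q$-exponent is tightest; the choice $c = 7r$ (rather than some smaller multiple of $r$) is made so that these extreme cases are handled uniformly.
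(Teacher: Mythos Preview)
Your proposal is correct and follows essentially the same route as the paper: the same element $g$ of order $q^r+1$, the same decomposition of $\widehat{Q}(G,g,7r)$ according to $\nu(x)$, and the same supporting Lemmas~\ref{l:count}, \ref{l:obound} and \ref{l:bdd} for the generic range $s\geqs 3$. The only cosmetic difference is that for $s\in\{1,2\}$ the paper computes the relevant fixed point ratios and class sizes exactly (finding in particular that ${\rm fpr}(x,G/H_i)=0$ in almost all cases), whereas you propose to use the $N_s$ bound together with the main theorem of \cite{Bur}; either route suffices.
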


\begin{proof}
As in Lemma \ref{l:bdd}, let $g \in G$ be an element of order $q^r+1$ and define $\widehat{Q}(G,g,7r)$ as in \eqref{e:qsc}. In view of Lemma \ref{l:ProbMethod}, it suffices to show that $\widehat{Q}(G,g,7r)<1$. To do this, it will be convenient to write
\[
\widehat{Q}(G,g,7r) = \widehat{Q}_1 + \widehat{Q}_2 + \widehat{Q}_3,
\]
where $\widehat{Q}_1$ and $\widehat{Q}_2$ are the contributions to $\widehat{Q}(G,g,7r)$ from the elements $x \in G$ of prime order with $\nu(x)=1$ and $2$, respectively, and $\widehat{Q}_3$ is the contribution from the remaining elements of prime order in $G$. We will estimate each $\widehat{Q}_i$ in turn. By Lemma \ref{l:bdd}(i), $\M(G,g) = \{H, H_1, \ldots, H_{\ell}\}$, where $H = O_{2r}^{-}(q)$ and $H_i = {\rm Sp}_{2r/k}(q^k).k$ for some prime divisor $k$ of $r$ (one subgroup for each prime). As before, we use the notation of \cite{AS} for involution class representatives.

First consider $\widehat{Q}_1$. There is a unique class of elements $x \in G$ of prime order with $\nu(x)=1$, namely the involutions of type $b_1$ (that is, the transvections in $G$). Here $|x^G|=q^{2r}-1$ and it is very easy to check that 
\[
{\rm fpr}(x,G/H) = \frac{1}{q} + \frac{1}{q(q^r-1)}
\]
and ${\rm fpr}(x,G/H_i) = 0$ for all $i$, whence
\begin{equation}\label{e:q1}
\widehat{Q}_1 = (q^{2r}-1)\cdot \left(\frac{1}{q} + \frac{1}{q(q^r-1)}\right)^{7r}.
\end{equation}

Now let us turn to $\widehat{Q}_2$. If $x$ is an $a_2$-involution, then 
\[
|x^G| = \frac{(q^{2r-2}-1)(q^{2r}-1)}{(q^2-1)} = u_1, \;\; {\rm fpr}(x,G/H) = \frac{q^{r-2}-1}{q^r-1} = v_1
\] 
and ${\rm fpr}(x,G/H_i) = 0$ for all $i$. Similarly, if $x=c_2$ then 
\[
|x^G|=(q^{2r-2}-1)(q^{2r}-1)= u_2,\;\;
{\rm fpr}(x,G/H) = \frac{1}{q^2} + \frac{1}{q^2(q^r-1)} = v_2
\]
and ${\rm fpr}(x,G/H_i) = 0$, unless $r$ is even and $H_i = {\rm Sp}_{r}(q^2).2$, in which case 
\[
{\rm fpr}(x,G/H_i) = \frac{q^{2r}-1}{|x^G|} = \frac{1}{q^{2r-2}-1} = w_2.
\]

Now assume $x \in G$ has odd prime order $t$ and $\nu(x)=2$, so $t$ divides $q^2-1$ and $G$ has exactly $(t-1)/2$ distinct conjugacy classes of such elements. 
In particular, if $t$ divides $q-\e$, then $G$ contains at most 
\[
\frac{1}{2}(q-\e) \cdot \frac{|{\rm Sp}_{2r}(q)|}{|{\rm Sp}_{2r-2}(q)|{|\rm GL}_{1}^{\e}(q)|} = \frac{1}{2}q^{2r-1}(q^{2r}-1)
\]
elements of order $t$. Since $q-\e$ has fewer than $\log_2(q-\e)$ odd prime divisors, it follows that $G$ contains at most 
\[
\log_2(q^2-1) \cdot \frac{1}{2}q^{2r-1}(q^{2r}-1)
\]
such elements. Now ${\rm fpr}(x,G/H_i) = 0$ for all $i$ and 
\[
{\rm fpr}(x,G/H) \leqs \frac{|O_{2r}^{-}(q)|}{|O_{2r-2}^{+}(q)||{\rm GU}_{1}(q)|} \cdot \frac{|{\rm Sp}_{2r-2}(q)||{\rm GU}_{1}(q)|}{|{\rm Sp}_{2r}(q)|} = \frac{q^{r-1}+1}{q(q^r-1)}.
\]
Putting all this together, we conclude that
\begin{equation}\label{e:q2}
\widehat{Q}_2 < u_1v_1^{7r} + u_2(v_2+w_2)^{7r} + \log_2(q^2-1) \cdot \frac{1}{2}q^{2r-1}(q^{2r}-1) \cdot \left(\frac{q^{r-1}+1}{q(q^r-1)}\right)^{7r}.
\end{equation}

Finally, let us consider $\widehat{Q}_3$. We will use the inequality
\[
(a+b)^n \leqs 2^{n-1}(a^n+b^n),
\]
which is valid for all positive real numbers $a,b,n$ with $n \geqs 1$. By combining Lemmas \ref{l:count}, \ref{l:obound} and \ref{l:bdd}, we get
\begin{align*}
\widehat{Q}_3 & < \sum_{s=3}^{2r-1} q^{\frac{1}{2}(4rs-s^2+3s+5)} \cdot \left(\frac{2}{q^s}+\frac{2}{q^r-1}\right)^{7r} \\
              & < \sum_{s=3}^{2r-1} q^{\frac{1}{2}(4rs-s^2+3s+5)} \cdot q^{14r-1}\left(q^{-7rs}+q^{-7r(r-1)}\right) \\
              & = q^{14r+\frac{3}{2}}\left(\sum_{s=3}^{2r-1} q^{\frac{1}{2}(3s-s^2-10rs)}   \right) + q^{21r-7r^2+\frac{3}{2}}\left(\sum_{s=3}^{2r-1}  q^{\frac{1}{2}(4rs-s^2+3s)} \right) \\
              & < q^{14r+\frac{3}{2}}\cdot q^{-15r+1} + q^{21r-7r^2+\frac{3}{2}} \cdot q^{2r^2+3r-1}
\end{align*}
and thus
\begin{equation}\label{e:q3}
\widehat{Q}_3 < q^{-r+\frac{5}{2}} + q^{24r-5r^2+\frac{1}{2}}. 
\end{equation}

By combining the expression for $\widehat{Q}_1$ in \eqref{e:q1} with the bounds on $\widehat{Q}_2$ and $\widehat{Q}_3$ in \eqref{e:q2} and \eqref{e:q3}, it is easy to check that 
$\widehat{Q}(G,g,7r) <1$ for all $r \geqs 5$. This completes the proof of the proposition.
\end{proof}

Finally, we show that the desired bound also holds when $r \in \{2,3,4\}$.

\begin{prop}\label{p:bdd2}
If $G = {\rm Sp}_{2r}(q) \in \mathcal{B}$ and $r \in \{2,3,4\}$, then $\gamma_u(G) \leqs 7r$.
\end{prop}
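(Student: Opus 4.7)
The strategy mirrors that of Proposition~\ref{p:bdd1}, with the additional complication that small rank requires more delicate estimates. As before, we take $g \in G$ of order $q^r+1$ and aim to show $\widehat{Q}(G,g,7r) < 1$ via Lemma~\ref{l:ProbMethod}. The description of $\M(G,g)$ from Lemma~\ref{l:bdd}(i) remains valid: $\M(G,g) = \{H, H_1\}$ with $H = O_{2r}^-(q)$ and $H_1 = {\rm Sp}_{2r/k}(q^k).k$ for the unique relevant prime $k$ dividing $r$ (namely $k=r$ when $r \in \{2,3\}$ and $k=2$ when $r=4$). This reduces the problem to uniform estimation of fixed point ratios on two explicit maximal subgroups.

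The first step is to dispose of the handful of small cases, namely $G \in \{{\rm Sp}_6(2), {\rm Sp}_8(2), {\rm Sp}_4(4)\}$, using the computational methods of Section~\ref{ss:comp}; these groups are small enough that either the character-table approach in \textsf{GAP} or direct fixed-point-ratio computation in {\sc Magma} (as described in Section~\ref{sss:comp_prob}) yields $\widehat{Q}(G,g,7r) < 1$ directly. For the remaining cases (larger $q$) we repeat the split $\widehat{Q}(G,g,7r) = \widehat{Q}_1 + \widehat{Q}_2 + \widehat{Q}_3$ according to the value of $\nu(x)$ as in the proof of Proposition~\ref{p:bdd1}, using the same class-count bound of Lemma~\ref{l:count}, the fixed-point-ratio bound of Lemma~\ref{l:obound} on the orthogonal subgroup $H$, and the estimate $\fpr(x,G/H_1) < |x^G|^{-1/2 + 1/(2r) + 1/(2r+2)}$ from \cite{Bur} on the field extension subgroup $H_1$.

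The main obstacle is that the crude final inequalities in Proposition~\ref{p:bdd1}, such as $\widehat{Q}_3 < q^{-r+5/2} + q^{24r - 5r^2 + 1/2}$, become useless when $r \leqs 4$ since the exponent $24r - 5r^2$ is positive in this range. To overcome this, one must treat each admissible value of $\nu(x)$ individually rather than summing crudely, extracting sharper class-size and centraliser estimates from \cite{Bur2} (and using $|x^G|^{-1/2+1/(2r)+1/(2r+2)} \leqs |x^G|^{-11/30}$ for $r=2$, etc.). For each fixed $r \in \{2,3,4\}$ the resulting bound on $\widehat{Q}(G,g,7r)$ becomes a sum of finitely many explicit rational expressions in $q$, which one verifies to be strictly less than $1$ for every value of $q$ outside the short list handled computationally; combined with Lemma~\ref{l:ProbMethod} this yields $\gamma_u(G) \leqs 7r$ as required.
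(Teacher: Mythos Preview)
Your overall strategy---same element $g$ of order $q^r+1$, same decomposition $\widehat{Q}_1+\widehat{Q}_2+\widehat{Q}_3$, same pair of maximal overgroups---matches the paper. For $r\in\{3,4\}$ your proposal would work, though it is more laborious than necessary: the paper simply verifies that the inequality \eqref{eq:qq0} of Lemma~\ref{l:bdd}(ii) continues to hold in these ranks, and then bounds $\widehat{Q}_3$ in one stroke by $|G|\cdot\bigl(2q^{-3}+2(q^r-1)^{-1}\bigr)^{7r}$, avoiding any per-$\nu$ bookkeeping.

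There is, however, a genuine gap in the $r=2$ case. First, your claimed exponent is wrong: for $r=2$ one has $-\tfrac12+\tfrac{1}{2r}+\tfrac{1}{2r+2}=-\tfrac12+\tfrac14+\tfrac16=-\tfrac{1}{12}$, not $-\tfrac{11}{30}$. More importantly, with the correct exponent the approach fails. For a regular semisimple element $x$ with $\nu(x)=3$ one has $|x^G|\approx q^8$, so the bound from \cite{Bur} gives only $\fpr(x,G/H_1)<q^{-2/3}$. Since Lemma~\ref{l:obound} contributes roughly $q^{-2}$ (the term $1/(q^r-1)=1/(q^2-1)$ dominates here), the combined fixed point ratio estimate is governed by $q^{-2/3}$, and with up to $|G|<q^{10}$ such elements the contribution to $\widehat{Q}_3$ is of order $q^{10}\cdot q^{-28/3}=q^{2/3}$, which exceeds $1$ for every $q$. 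No amount of sharpening the class-size estimates repairs this, because the weakness lies in the fixed point ratio bound itself.

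The paper's proof for $r=2$ avoids \cite{Bur} entirely and instead computes $\fpr(x,G/H)$ and $\fpr(x,G/H_1)$ explicitly, class by class. The key observation you are missing is that most prime-order classes do not meet $H=O_4^-(q)$ or $H_1={\rm Sp}_2(q^2).2$ at all: for $\nu(x)=3$ only elements of order dividing $q^2+1$ contribute, and for these one obtains the exact value $\fpr(x,G/H)=\fpr(x,G/H_1)=2/\bigl(q^2(q^2-1)\bigr)$, which is of order $q^{-4}$ rather than $q^{-2/3}$. Similarly for $\nu(x)=2$ one needs the exact value $\fpr(a_2,G/H_1)=q/(q^2-1)$ (noting that field-automorphism involutions in $H_1$ are of type $a_2$). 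Without this explicit analysis the $r=2$ case does not go through.
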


\begin{proof}
As before, fix an element $g \in G$ of order $q^r+1$ and note that the description of $\mathcal{M}(G,g)$ in Lemma \ref{l:bdd}(i) still holds. In addition, if we define $\widehat{Q}_i$ as above then the expression for $\widehat{Q}_1$ in \eqref{e:q1} is still valid. Similarly, if $r \in \{3,4\}$ then we get the upper bound on $\widehat{Q}_2$ in \eqref{e:q2} (and we can set $w_2=0$ when $r=3$).

First assume $r=4$ and $q \geqs 4$, so $\M(G,g) = \{O_{8}^{-}(q), {\rm Sp}_{4}(q^2).2\}$ and $\ell=1$ in the notation of Lemma \ref{l:bdd}. Moreover, one can check that the upper bound in \eqref{eq:qq0} holds (the same proof goes through unchanged) and thus
\[
\widehat{Q}_3 < |G|\cdot \left(\frac{2}{q^3}+\frac{2}{q^4-1}\right)^{28}< q^{36}\left(\frac{2}{q^3}+\frac{2}{q^4-1}\right)^{28}.
\]
It is now easy to check that $\widehat{Q}(G,g,28)<1$. The case $(r,q)=(4,2)$ can be handled using \textsf{GAP} (see Section \ref{sss:comp_prob}) and we get $\gamma_u(G) \leqs 10$.  

The case $r=3$ is very similar. Here $\M(G,g) = \{O_{6}^{-}(q), {\rm Sp}_{2}(q^3).3\}$ and the bound in \eqref{eq:qq0} still holds. Indeed, the main theorem of \cite{Bur} implies that 
\[
\sum_{i=1}^{\ell}{\rm fpr}(x,G/H_i)  = {\rm fpr}(x,G/H_1) < \a^{-\frac{1}{2}+\frac{1}{6}} = \a^{-\frac{1}{3}},
\]
where $H_1 = {\rm Sp}_{2}(q^3).3$ and $\a$ is defined as in \eqref{e:al}, and one checks that this is less than $q^{-s}+(q^3-1)^{-1}$. Therefore,
\[
\widehat{Q}_3 < |G|\cdot \left(\frac{2}{q^3}+\frac{2}{q^3-1}\right)^{21}< q^{21}\left(\frac{2}{q^3}+\frac{2}{q^3-1}\right)^{21}
\]
and the result follows.

Finally, suppose $r=2$ and $q \geqs 4$. Write $\M(G,g) = \{H,H_1\}$, where $H = O_4^{-}(q)$ and $H_1 = {\rm Sp}_{2}(q^2).2$. As noted above, the expression for $\widehat{Q}_1$ in \eqref{e:q1} is still valid. Now consider $\widehat{Q}_2$. If $x=a_2$ then ${\rm fpr}(x,G/H)=0$ and ${\rm fpr}(x,G/H_1) = q/(q^2-1)$ (the involutory field automorphisms of ${\rm Sp}_{2}(q^2)$ are $a_2$-involutions). Similarly, if $x=c_2$ then $|x^G|=u_2$, ${\rm fpr}(x,G/H)=v_2$ and ${\rm fpr}(x,G/H_1)=w_2$ as before, so the upper bound in \eqref{e:q2} holds, with $v_1 = q/(q^2-1)$. 

To complete the proof of the proposition, we may assume $x \in G$ has prime order $t$ and $\nu(x)=3$. Here $t$ is odd and $x$ is regular. Let $i \in \{1,2,4\}$ be minimal such that $t$ divides $q^i-1$. We consider each possibility for $i$ in turn.

Suppose $i=4$, so $t$ divides $q^2+1$ and we see that there are at most $\log_2(q^2+1)$ possibilities for $t$. In addition, for a fixed prime $t$, there are at most $\frac{1}{4}(t-1) \leqs \frac{1}{4}q^2$ distinct $G$-classes of elements of order $t$, each of which has size  
\[
\frac{|{\rm Sp}_{4}(q)|}{|{\rm GU}_{1}(q^2)|} = q^4(q^2-1)^2.
\]
It is straightforward to check that 
\[
{\rm fpr}(x,G/H) = {\rm fpr}(x,G/H_1) = \frac{2}{q^2(q^2-1)}.
\]
Finally, suppose $i \in \{1,2\}$. Here neither $H$ nor $H_1$ contains any regular semisimple elements of order $t$, so ${\rm fpr}(x,G/H) = {\rm fpr}(x,G/H_1) = 0$. Putting this together, we conclude that 
\[
\widehat{Q}_3 < \log_2(q^2+1) \cdot \frac{1}{4}q^2 \cdot q^4(q^2-1)^2 \cdot \left(\frac{2}{q^2(q^2-1)}\right)^{14}
\]
and the result follows. 
\end{proof}

\subsubsection{General case}
To complete the proof of Theorem \ref{t:ClassicalUDN}, we need to verify the bound
\[
\gamma_u(G) \leqs  7r+56.
\]
In view of our earlier work, we may assume that $r>1$ and $G \not\in \mathcal{A} \cup \mathcal{B}$. It will be convenient to handle some small groups separately, and with this in mind we define 
\[
\mathcal{C} = \{ {\rm L}_9(2), {\rm L}_8(2), {\rm P}\O^+_8(3),  \O^+_8(2),  {\rm L}_{7}(2),  {\rm PSp}_6(3),  {\rm U}_6(2),  {\rm L}_4(2),  {\rm U}_4(3),  {\rm U}_4(2),   {\rm U}_3(5) \}.
\]

\begin{prop}\label{p:bdc}
If $G \in \mathcal{C}$, then $\gamma_u(G) \leqs c$, where $c$ is as follows:

{\small
\[
{\renewcommand{\arraystretch}{1.1}
\begin{array}{lccccccccccc} 
\hline
G        & {\rm L}_9(2) & {\rm L}_8(2)  & {\rm P}\O^+_8(3) & \O^+_8(2)  & {\rm L}_{7}(2) & {\rm PSp}_6(3) & {\rm U}_6(2) & {\rm L}_4(2) & {\rm U}_4(3)   & {\rm U}_4(2) & {\rm U}_3(5)  \\
c                  & 7            & 9    & 19               & 26    & 6     & 3              & 6            & 4            & 16             & 8            & 13         \\
s      &  4 \oplus 5   & 3 \oplus 5 &  {\tt 14A}        & {\tt 15A}  & {\tt 105A} & {\tt 14A}      & {\tt 11A}    & {\tt 15A}    & {\tt 9A}       & {\tt 9A}     & {\tt 13A}  \\
\hline
\end{array}}
\]}

\vspace{1mm}

\noindent In particular, $\gamma_u(G) \leqs  7r+56$.
\end{prop}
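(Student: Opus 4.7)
The proof is a case-by-case computational verification over the eleven groups in $\mathcal{C}$. For each $G$, the table supplies both a specific element $s \in G^{\#}$ and a target value $c$; the task is to show $\gamma_u(G) \leqs c$, after which $\gamma_u(G) \leqs 7r + 56$ follows by inspection --- the loosest entry is ${\rm U}_4(3)$ with $16 \leqs 7 \cdot 3 + 56 = 77$ and the tightest is ${\rm L}_8(2)$ with $9 \leqs 7 \cdot 7 + 56 = 105$.

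For the nine entries where $s$ is specified by an \textsc{Atlas} class label, I would apply the character-theoretic implementation of Lemma~\ref{l:ProbMethod} described in Section~\ref{sss:comp_prob}. The ordinary character tables of $G$ and its maximal subgroups are available in the \textsf{GAP} Character Table Library for each of these nine groups, so for every class representative $H$ of maximal subgroups one can compute $\fpr(s,G/H) = 1_H^G(s)/|G:H|$. Since $\M(G,s)$ then contains $\fpr(s,G/H)\cdot|G:H|$ conjugates of each self-normalising $H$, this determines $\M(G,s)$ completely. Evaluating $\widehat{Q}(G,s,c)$ from \eqref{e:qsc} reduces to a finite sum over prime-order class representatives, and verifying $\widehat{Q}(G,s,c) < 1$ yields $\gamma_u(G) \leqs c$.

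For ${\rm L}_9(2)$ and ${\rm L}_8(2)$, the element $s = k \oplus (n-k)$ has large order (namely $465$ and $217$, respectively, being $\mathrm{lcm}(2^k-1,2^{n-k}-1)$) and one must switch to \textsc{Magma}. Here I would construct $s$ as a block-diagonal matrix acting irreducibly on complementary subspaces of the natural module, enumerate the maximal subgroups of $G$ via \texttt{MaximalSubgroups}, and compute $\fpr(x,G/H) = |x^G \cap H|/|x^G|$ for each relevant pair via \texttt{IsConjugate}, then apply Lemma~\ref{l:ProbMethod} as above. The ${\rm L}_9(2)$ calculation is precisely the $616$-second, $771$~MB benchmark flagged in Remark~\ref{r:computation}.

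The principal obstacle is computational expense rather than any subtlety in the method. The comparatively weak bounds $c=19$ for ${\rm P\O}_8^+(3)$ and $c=26$ for $\O_8^+(2)$ reflect the fact that the chosen $s$ lies in several maximal subgroups --- Theorem~\ref{t:ClassicalMax} gives $\mu(G)=7$ for both --- so the sum of fixed point ratios over $\M(G,s)$ decays only slowly, and a sizeable exponent $c$ is required to drive $\widehat{Q}(G,s,c)$ below $1$. Where the probabilistic estimate is inefficient one can group contributions via Lemma~\ref{l:bd}, and as a fallback one can exhibit an explicit TDS of size $c$ among conjugates of $s$ by random search in \textsc{Magma} along the lines of Section~\ref{sss:comp_search}, which bypasses the need to enumerate all prime-order classes.
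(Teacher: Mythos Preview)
Your approach is essentially correct for ten of the eleven groups, and matches the paper's method: verify $\widehat{Q}(G,s,c)<1$ computationally, using \textsf{GAP} character tables where available and {\sc Magma} for ${\rm L}_9(2)$ and ${\rm L}_8(2)$.

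There is, however, a genuine gap at $G=\O_8^+(2)$. You write that the weak bound $c=26$ arises because the sum of fixed point ratios over $\M(G,s)$ ``decays only slowly, and a sizeable exponent $c$ is required to drive $\widehat{Q}(G,s,c)$ below~$1$.'' This is not what happens. For $s$ in class \texttt{15A} one has
\[
F(x,s)=\sum_{H\in\M(G,s)}\fpr(x,G/H)>1
\]
for certain prime-order $x$, so the corresponding term $|x^G|\,F(x,s)^c$ in $\widehat{Q}(G,s,c)$ grows with $c$, and $\widehat{Q}(G,s,c)>1$ for \emph{every} $c\geqs 1$. No exponent suffices; the estimate \eqref{e:qsc} is simply too crude here, and neither Lemma~\ref{l:bd} nor a larger $c$ can repair it.

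The paper's remedy is to work one level lower, with $P(x,s)$ rather than its upper bound $F(x,s)$. From the proof of \cite[Proposition~6.2]{GK} one has $P(x,s)<7/10$ for all prime-order $x$, so setting $Q(x,s)=\min\{F(x,s),7/10\}$ gives $P(x,s)\leqs Q(x,s)$, and one checks directly from \eqref{e:pxs} that
\[
Q(G,s,26)\leqs\sum_{i=1}^{k}|x_i^G|\,Q(x_i,s)^{26}<1.
\]
Your fallback of a random search for an explicit TDS of size $26$ is a plausible alternative, but you should be aware that it is a genuinely different argument, not a refinement of the $\widehat{Q}$ computation, and you have not identified that such a switch is forced.
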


\begin{proof}
We implement the probabilistic method computationally (see Section~\ref{sss:comp_prob}), working with an element $s \in G$ in the conjugacy class specified in the table. We use 
\textsc{Magma} to handle the groups ${\rm L}_9(2)$ and ${\rm L}_8(2)$, and \textsf{GAP} for the remaining cases. In this way, for $G \ne \O^+_8(2)$, one can check that $\widehat{Q}(G,s,c)<1$ for the stated value of $c$, whence $\gamma_u(G) \leqs c$ as required. 

The case $G = \O^+_8(2)$ requires more attention. If $s$ is in {\tt 15A}, then 
\begin{equation}\label{e:fxs}
F(x,s) \coloneqq \sum_{H \in \M(G,s)} \fpr(x,G/H) > 1
\end{equation}
for some elements $x \in G$ of prime order and thus $\widehat{Q}(G,s,d)>1$ for all $d \geqs 1$. However, the proof of \cite[Proposition~6.2]{GK} gives
\[
P(x,s) =  \frac{|\{z \in s^G \,:\, G \ne \< x,z \>\}|}{|s^G|} < \frac{7}{10}
\]
for all elements $x \in G$ of prime order. Therefore, $P(x,s) \leqs \min\{F(x,s), 7/10\} =: Q(x,s)$ and thus 
\[
Q(G,s,26) \leqs \sum_{i=1}^{k}|x_i^G| \cdot P(x_i,s)^{26} \leqs \sum_{i=1}^{k}|x_i^G| \cdot Q(x_i,s)^{26}< 1
\]
(see \eqref{e:pxs}). This gives $\gamma_u(G) \leqs 26$ as claimed.
\end{proof}

For the remainder, we can assume $r>1$ and $G \not\in \mathcal{A} \cup \mathcal{B} \cup\mathcal{C}$. Define an integer $R(G)$ as follows:
\[
{\renewcommand{\arraystretch}{1.1}
\begin{array}{lccccc}
\hline
G      & {\rm L}_{r+1}(q) & {\rm U}_{r+1}(q) & {\rm PSp}_{2r}(q) & {\rm P\O}^+_{2r}(q) & {\rm P\O}^-_{2r}(q) \\ 
R(G)   & 6            & 7            & 4              & 4                & 6                \\
\hline
\end{array}}
\]

\begin{rem}\label{r:obs}
Suppose $G \not\in \mathcal{A} \cup \mathcal{B} \cup \mathcal{C}$, $r < R(G)$ and $G \ne {\rm Sp}_{4}(2)'\cong A_6$. Then $|\M(G,s)|=1$ for the element $s \in G$ identified in the proof of Theorem~\ref{t:ClassicalMax}. 
\end{rem}

The following lemma on fixed point ratios is our key tool in the proof of Theorem \ref{t:ClassicalUDN}. The proof uses several results on fixed point ratios for primitive actions of finite simple classical groups. For example, if $H$ acts reducibly on the natural module $V$ then we appeal to the bounds on ${\rm fpr}(x,G/H)$ obtained by Guralnick and Kantor in \cite[Section 3]{GK}. For an irreducible subgroup $H$, we apply the main theorem of \cite{Bur}. For instance, if $H$ is irreducible and the rank $r$ of $G$ is large enough, then \cite[Corollary 2]{Bur} states that
\begin{equation}\label{eq:fbd}
{\rm fpr}(x,G/H) < 2q^{-\frac{r(r-1)}{r+1}}
\end{equation}
for all $x \in G$ of prime order (in particular, this holds if $\dim V\geqs 7$).  In the statement of the lemma, we define $F(x,s)$ as in \eqref{e:fxs}.

\begin{lem}\label{l:ClassicalFPRs}
Let $G \not\in \mathcal{A} \cup \mathcal{B} \cup \mathcal{C}$ be a finite simple classical group of rank $r \geqs R(G)$. Then there exists $s \in G$ such that for all elements $x \in G$ of prime order
\[
F(x,s) < \min\left\{\dfrac{c}{q^2}, \dfrac{c}{q^{r/2-3/2}}\right\}
\]
where $c=3$ unless $G = {\rm L}_{r+1}(q)$, in which case $c=4$.
\end{lem}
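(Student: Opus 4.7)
The plan is to take the element $s \in G$ identified in the proof of Theorem~\ref{t:ClassicalMax} (following \cite[Table~II]{GK}) and then bound $\fpr(x,G/H)$ for each $H \in \M(G,s)$ and each prime-order $x \in G$, finally summing the bounds. Recall that this $s$ is constructed so that $|\M(G,s)|$ is at most $3$ (apart from the four tabulated exceptions, all of which lie in $\mathcal{C}$ and are handled by Proposition~\ref{p:bdc}), and $\M(G,s)$ is explicitly described: it consists of one reducible subgroup stabilising a decomposition of the form $k \oplus (n-k)$ or $k \perp (n-k)$ with $k$ close to $n/2$, together with at most two further subgroups that are either parabolics of comparable rank (when the decomposition involves totally singular subspaces) or a field-extension/orthogonal overgroup.

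The key step is then to apply the appropriate fixed point ratio estimates to each $H \in \M(G,s)$:

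\textbf{(a)} If $H$ acts reducibly on the natural module then $H$ is one of the reducible maximal subgroups described above, each stabilising a subspace of dimension at least $\lfloor n/2 \rfloor - O(1)$. For such subspace actions I would invoke the bounds in \cite[Section~3]{GK} (or equivalently \cite[Theorem~2]{GK}), which give $\fpr(x,G/H) < c_1/q^{k}$ for some small absolute constant $c_1$, with $k \geqs r/2 - O(1)$. This supplies the factor $q^{-r/2+3/2}$ in the stated bound.

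\textbf{(b)} If $H$ acts irreducibly, then, since $\dim V \geqs 7$ by our rank hypotheses ($r \geqs R(G)$) and $G \notin \mathcal{C}$, I would apply \cite[Corollary~2]{Bur} in the form \eqref{eq:fbd} to obtain
\[
\fpr(x,G/H) < 2q^{-r(r-1)/(r+1)}.
\]
Since $r(r-1)/(r+1) \geqs r - 2 \geqs r/2 - 3/2 + 2$ for $r \geqs R(G)$, contributions from irreducible overgroups are dwarfed by those in case (a) and contribute only lower-order terms to $F(x,s)$.

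Summing at most three such contributions (at most two for most families, and with the $+1$ in $c=4$ for $G = {\rm L}_{r+1}(q)$ accounting for the symmetric pair of parabolic overgroups $P_k$ and $P_{n-k}$) gives $F(x,s) < c/q^{r/2-3/2}$ uniformly in $x$; a parallel calculation using the sharper exponent on the reducible summands when $r$ is small yields $F(x,s) < c/q^2$. The main obstacle will be bookkeeping across the various families ${\rm L}_{r+1}(q)$, ${\rm U}_{r+1}(q)$ (even $r$), ${\rm PSp}_{2r}(q)$ ($r$ even, $q$ odd), ${\rm P\O}^{+}_{2r}(q)$ ($r$ even), and ${\rm P\O}^{-}_{2r}(q)$: each requires a slightly different choice of $s$ and identification of $\M(G,s)$, and in borderline ranks one must confirm that the irreducible overgroups really satisfy the hypotheses of \cite[Corollary~2]{Bur} (which is why the small groups were sequestered in $\mathcal{C}$). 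Once the reducible contribution is pinned down precisely from \cite{GK} and the irreducible one absorbed via \cite{Bur}, the inequality follows for the stated constants $c \in \{3,4\}$.
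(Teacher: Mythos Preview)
Your plan is essentially the paper's approach: fix a specific $s$, determine $\M(G,s)$, bound $\fpr(x,G/H)$ via \cite[Section~3]{GK} for reducible overgroups and via \cite{Bur} for irreducible ones, and sum. The one substantive difference is in your choice of $s$. For ${\rm P\O}^-_{2r}(q)$ and ${\rm PSp}_{2r}(q)$ ($r$ even, $q$ odd) the paper does \emph{not} use the element from Theorem~\ref{t:ClassicalMax}; it takes $s$ to be a Singer cycle, so by Bereczky \cite{Ber} every maximal overgroup is an irreducible field-extension subgroup (one per prime divisor of $r$, plus one or two more), and only the bound \eqref{eq:fbd} is needed, summed over roughly $2+\log_2 r$ terms. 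Your blanket description of $\M(G,s)$ as ``one reducible subgroup plus at most two others'' therefore does not fit these families (nor the rank-$4$ cases ${\rm PSp}_8(q)$ and ${\rm P\O}^+_8(q)$, where again all overgroups used in the paper are irreducible). That said, your alternative of sticking with the Theorem~\ref{t:ClassicalMax} element would also work for ${\rm P\O}^-_{2r}(q)$: there $\M(G,s)$ consists of three \emph{reducible} subgroups (one non-degenerate stabiliser and two parabolics $P_{(r-1)/2}$), and the \cite{GK} bounds on the parabolics already give contributions of order $q^{-(r-1)/2}$, which suffices. So the strategy is correct; only the case-by-case description of $\M(G,s)$ needs adjusting.
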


\begin{proof}
Throughout, let $x \in G$ be an element of prime order. We partition the proof into four cases:
\begin{itemize}\addtolength{\itemsep}{0.2\baselineskip}
\item[{\rm (a)}] $G \in \{ {\rm L}_{r+1}(q) \,:\, r \geqs 6 \} \cup \{ {\rm U}_{r+1}(q) \,:\, \text{$r \geqs 8$ even} \}$.
\item[{\rm (b)}] $G \in \{ {\rm P}\O^-_{2r}(q) \,:\, r \geqs 6 \} \cup \{ {\rm PSp}_{2r}(q) \,:\, \text{$r \geqs 6$ even, $q$ odd} \}$.
\item[{\rm (c)}] $G = {\rm P}\O^+_{2r}(q)$, where $r \geqs 6$ is even.
\item[{\rm (d)}] $G = {\rm PSp}_{8}(q)$ with $q$ odd, or $G = {\rm P\O}_{8}^{+}(q)$ and $q \geqs 4$.
\end{itemize}

First consider (a). The case $G = {\rm L}_{11}(2)$ requires special attention. If $s\in G$ has order $2^{11}-1$ then the proof of \cite[Proposition~6.3]{GK} gives $\M(G,s) = \{H\}$ with $H$ a field extension subgroup of type ${\rm GL}_{1}(2^{11})$. By applying the bound in \eqref{eq:fbd}, we deduce that $F(x,s) = \fpr(x,G/H) < 2^{-79/11}$ and the result follows. 

For the other groups in (a), let $s \in G$ be the element defined in \cite[Table~II]{GK}. As explained in the proof of \cite[Proposition~4.1]{GK}, the maximal overgroups of $s$ are the obvious reducible subgroups, and \cite[Propositions 3.15 and 3.16]{GK} supply upper bounds on the associated fixed point ratios. It is now straightforward to verify the desired bound. For example, suppose $G={\rm L}_{r+1}(q)$ with $r \geqs 6$. As in \cite[Table~II]{GK}, we take  
\[ 
s = 
\left\{
\begin{array}{ll}
\frac{r+2}{2} \oplus \frac{r}{2}   & \text{if $r$ is even} \\
\frac{r+5}{2} \oplus \frac{r-3}{2} & \text{if $r \equiv 1 \imod{4}$} \\
\frac{r+3}{2} \oplus \frac{r-1}{2} & \text{if $r \equiv 3 \imod{4}$}, \\
\end{array}
\right.
\]  
noting that $\mathcal{M}(G,s) = \{H,K\}$ where $H$ and $K$ are the stabilisers of appropriate $k$- and $(r+1-k)$-spaces with $k \leqs r/2$. By \cite[Proposition~3.1(i)]{GK}, if $L \leqs G$ is the stabiliser of an $\ell$-space with $\ell \leqs (r+1)/2$, then ${\rm fpr}(y,G/H) < 2q^{-\ell}$ for all $1 \ne y \in G$. Therefore, 
\[
F(x,s) = \fpr(x,G/H) + \fpr(x^{\tau},G/H) < \min\left\{\frac{4}{q^2}, \frac{4}{q^{r/2-3/2}} \right\},
\]
where $\tau \in {\rm Aut}(G)$ is an involutory graph automorphism. A similar argument applies when $G = {\rm U}_{r+1}(q)$ with $r \geqs 8$ even and we omit the details.

Next consider the groups in (b). Let $s \in G$ be a Singer cycle (that is, $s$ generates an irreducible cyclic subgroup of maximal possible order). By the main theorem of Bereczky \cite{Ber}, the maximal overgroups of $s$ are field extension subgroups and \cite{Bur} provides  upper bounds on the associated fixed point ratios. We will assume $G = {\rm P}\O^-_{2r}(q)$ with $r \geqs 6$; the other case is very similar.

By \cite{Ber}, the members of $\M(G,s)$ are subgroups of type ${\rm GU}_{r}(q)$ and $O^-_{2r/k}(q^k)$, where $k$ is a prime divisor of $r$. In addition, if $qr$ is odd, then there are also field extension subgroups of type $O_{r}(q^2)$.  A straightforward calculation shows that $\M(G,s)$ contains exactly one of each such subgroup. From \cite[Corollary 3.38]{Bur2} we get 
\[
|x^G|>\frac{1}{4}\left(\frac{q}{q+1}\right)q^{4r-6}
\]
and thus \cite[Theorem 1]{Bur} implies that
\begin{equation}\label{eq:nk}
{\rm fpr}(x,G/H) < |x^G|^{-\frac{1}{2}+\frac{1}{2r}+\frac{1}{2r-2}} < \left(\frac{1}{4}\left(\frac{q}{q+1}\right)q^{4r-6}\right)^{-\frac{1}{2}+\frac{1}{2r}+\frac{1}{2r-2}} < \frac{2}{q^{r-1}}
\end{equation}
for each $H \in \mathcal{M}(G,s)$. Therefore,
\[
F(x,s) < \frac{2(2+\log_2r)}{q^{r-1}} < \min\left\{\frac{3}{q^2}, \frac{3}{q^{r/2-3/2}} \right\}.
\]

Next let us turn to case (c), so $G = {\rm P}\O^+_{2r}(q)$ and $r \geqs 6$ is even. Fix an element 
$s = (r-2)^- \perp (r+2)^- \in G$. Then \cite[Proposition~5.14]{BGK} implies that 
$\M(G,s) = \{ L, H_1, H_2 \}$, where $L$ is a reducible subgroup of type $O_{r-2}^-(q) \times O_{r+2}^-(q)$ and $H_1, H_2$ are field extension subgroups of type $O^+_r(q^2)$. By applying 
\cite[Proposition~3.16]{GK} and the bound in \eqref{eq:fbd}, we get  
\begin{equation}\label{eq:nk2}
F(x,s) < \left(\frac{3}{q^{r-2}} + \frac{1}{q^{r-1}} + \frac{1}{q^{r/2}}\right) + 4q^{-\frac{r(r-1)}{r+1}} < \min\left\{\frac{3}{q^2}, \frac{3}{q^{r/2-3/2}} \right\}.
\end{equation}

Finally, we handle the two cases in (d). First assume $G = {\rm PSp}_{8}(q)$ and $q$ is odd.
Take $s = 2 \perp 6 \in G$ and note that the proof of \cite[Proposition 4.1]{GK} implies that 
$\M(G,s) = \{ H \}$, where $H$ is of type ${\rm Sp}_{2}(q) \times {\rm Sp}_{6}(q)$. By applying \cite[Proposition~3.5]{Harper} we deduce that $F(x,s) < 2q^{-2}$ and the result follows. 
Finally, suppose $G = {\rm P}\O^+_8(q)$ with $q \geqs 4$. Let $s \in G$ be an element of order $(q^2+1)/(q-1,2)$. By \cite[Proposition~5.15]{BGK}, $\M(G,s)$ contains three subgroups of type 
$O_4^-(q) \wr S_2$, plus an additional subfield subgroup of type $O_8^-(q^{1/2})$ if $q$ is a square. Now $|x^G| \geqs (q^2+1)^2(q^6-1)$ (minimal if $q$ is even and $x$ is an $a_2$ involution), so the main theorem of \cite{Bur} implies that
\[
F(x,s) < 4\left((q^2+1)^2(q^6-1)\right)^{-\frac{1}{2}+\frac{1}{8}} < \frac{3}{q^2}
\]
and the result follows. This completes the proof of the lemma.
\end{proof}

We are now ready to complete the proof of Theorem \ref{t:ClassicalUDN}.

\begin{prop}\label{p:ClassicalLinearUpper}
Let $G$ be a finite simple classical group of rank $r$. Then
\[
\gamma_u(G) \leqs 7r+56.
\]
\end{prop}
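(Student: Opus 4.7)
The plan is to first dispatch the special cases $G \in \{{\rm L}_2(q)\} \cup \mathcal{A} \cup \mathcal{B} \cup \mathcal{C}$ using the preceding propositions, each of which yields a bound well within $7r + 56$: Proposition \ref{p:psl2} gives $\gamma_u({\rm L}_2(q)) \leqs 4$; Proposition \ref{p:ClassicalConstant} gives $\gamma_u(G) \leqs 15$ for $G \in \mathcal{A}$; Propositions \ref{p:B00}, \ref{p:bdd1}, and \ref{p:bdd2} give $\gamma_u(G) \leqs 7r$ for $G \in \mathcal{B}$; and Proposition \ref{p:bdc} handles $G \in \mathcal{C}$ by explicit computation.

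For the remaining classical groups $G \notin \{{\rm L}_2(q)\} \cup \mathcal{A} \cup \mathcal{B} \cup \mathcal{C}$, I would split on the rank relative to $R(G) \leqs 7$. When $r < R(G)$, Remark \ref{r:obs} produces an element $s \in G$ with $\mathcal{M}(G,s) = \{H\}$; then Corollary \ref{c:CriterionBase} reduces the problem to bounding the base size $b(G, G/H)$. Since $H$ is a reducible subspace stabiliser and the natural module has dimension uniformly bounded (by $2(R(G)-1) \leqs 12$), Proposition \ref{p:ben} yields a small constant bound, certainly at most $7r + 56$.

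The substantive case is $r \geqs R(G)$. Here I would apply Lemma \ref{l:ClassicalFPRs} to select $s \in G$ with $F(x,s) < \min\{c'/q^2,\, c'/q^{r/2 - 3/2}\}$ for every element $x \in G$ of prime order, where $c' \in \{3,4\}$, and then use Lemma \ref{l:ProbMethod} with $c = 7r + 56$. A blunt application of the uniform bound $\widehat{Q}(G,s,c) \leqs |G| \cdot (c'/q^{r/2 - 3/2})^c$ against $|G| \leqs q^{O(r^2)}$ only suffices for $c$ quadratic in $r$, so the sum $\sum_i |x_i^G| F(x_i,s)^c$ should instead be partitioned by $\nu(x_i)$, the codimension of the largest eigenspace of $x_i$ on the natural module. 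For small $\nu(x)$ one uses the sharp subspace-action bounds $\fpr(x, G/H) = O(q^{-\nu(x)})$ from \cite[Section 3]{GK} together with the small class size $|x^G|$; for large $\nu(x)$ one combines the decay $\fpr(x, G/H) \leqs |x^G|^{-1/2 + O(1/r)}$ from the main theorem of \cite{Bur} (for non-subspace overgroups in $\mathcal{M}(G,s)$) with the count $N_s \leqs q^{(4rs - s^2 + 3s + 5)/2}$ from Lemma \ref{l:count} (or its analogue for other classical types).

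The main obstacle is this final numerical verification, which must hold uniformly for every $r \geqs R(G)$ and every relevant $q$, not merely asymptotically. The linear constant $7r + 56$ is dictated by a careful $\nu$-by-$\nu$ balance between $N_s$ and the per-class bound on $F(x,s)$, and one expects the tightest regime to be $r$ just above $R(G)$ rather than $r$ large. Each of the five classical families (linear, unitary, symplectic, orthogonal of either type) must be checked separately with its specific choice of $s$ from Lemma \ref{l:ClassicalFPRs}, though the argument follows the same pattern in each case.
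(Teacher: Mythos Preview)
Your overall structure matches the paper's: dispatch $\{{\rm L}_2(q)\} \cup \mathcal{A} \cup \mathcal{B} \cup \mathcal{C}$ first, then split the remainder on $r < R(G)$ versus $r \geqs R(G)$, using Remark~\ref{r:obs} plus a base-size bound in the former case and Lemma~\ref{l:ClassicalFPRs} plus Lemma~\ref{l:ProbMethod} in the latter.

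However, your central assertion about the case $r \geqs R(G)$ is mistaken. You claim that the crude bound $\widehat{Q}(G,s,c) \leqs |G| \cdot (c'/q^{r/2-3/2})^c$ ``only suffices for $c$ quadratic in $r$'', and that one must therefore partition by $\nu(x)$ and invoke Lemma~\ref{l:count} (or analogues for other types). In fact the crude bound already gives a \emph{linear} $c$: since $|G| < q^{Dr^2}$ for a constant $D \leqs 2$ (indeed $|G| < q^{r^2+2r}$ for linear and unitary, and $|G| < q^{2r^2 \pm r}$ otherwise), and $(c'/q^{r/2-3/2})^c = (c')^c \cdot q^{-c(r/2-3/2)}$, the inequality $\widehat{Q}(G,s,c)<1$ follows once $c \cdot (r/2 - 3/2 - \log_q c')$ exceeds $Dr^2$, which requires only $c = \Theta(r)$. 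This is exactly what the paper does: for $r \geqs 7$ it simply checks that $q^{r^2+2r}\cdot(4/q^{r/2-3/2})^{7r+56} < 1$ (respectively $q^{2r^2\pm r}\cdot(3/q^{r/2-3/2})^{7r+56} < 1$), handling the borderline pairs $(r,q) \in \{(7,2),(8,2)\}$ by the sharper fixed point ratio estimates already appearing in the proof of Lemma~\ref{l:ClassicalFPRs}. For $R(G) \leqs r < 7$ the paper instead uses the other branch of the minimum, $c'/q^2$, and again the crude bound $|G|\cdot(c'/q^2)^{7r+56}$ suffices except for a handful of groups over $\mathbb{F}_2$ treated individually. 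No partition by $\nu$ is needed anywhere; you may have been misled by using only the $c'/q^2$ bound, which indeed would force $c$ quadratic.

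One minor point on the case $r < R(G)$: you assert that $H$ is a reducible subspace stabiliser, but this is not guaranteed by Remark~\ref{r:obs}. The paper covers both possibilities, using Proposition~\ref{p:ben} when $H$ is reducible and the main theorem of \cite{B07} (giving $b(G,G/H) \leqs 5$) when $H$ is irreducible.
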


\begin{proof}
We have already verified the bounds in parts (i), (ii) and (iii) of Theorem \ref{t:ClassicalUDN}, so we may assume that $r>1$ and $G \not\in \mathcal{A} \cup \mathcal{B}$. In addition, we can assume that $G \not\in \mathcal{C}$ (see Proposition \ref{p:bdc}) and $G \ne {\rm Sp}_{4}(2)'\cong A_6$ (see the remark following the statement of Theorem \ref{t:ClassicalUDN}).  Note that if $G$ is linear or unitary, then $|G| < q^{r^2+2r}$. In general, $|G|< q^{2r^2+\e r}$, where $\e=1$ if $q$ is odd, otherwise $\e=-1$ (since the symplectic groups in even characteristic are contained in $\mathcal{B}$). We will use the notation from Lemma \ref{l:ProbMethod}.

First assume $r \geqs 7$ and $(r,q) \ne (7,2), (8,2)$. Choose $s \in G$ as in Lemma~\ref{l:ClassicalFPRs}. If $G$ is linear or unitary, then we deduce that
\[
\what{Q}(G,s,7r+56) < q^{r^2+2r}\cdot\left(\frac{4}{q^{r/2-3/2}}\right)^{7r+56} < 1,
\]
so Lemma \ref{l:ProbMethod} implies that $\gamma_u(G) \leqs 7r+56$. Similarly, if $G$ is symplectic or orthogonal, then
\[
\what{Q}(G,s,7r+56) < q^{2r^2+\e r}\cdot\left(\frac{3}{q^{r/2-3/2}}\right)^{7r+56} < 1,
\]
and once again we conclude that $\gamma_u(G) \leqs 7r+56$.

Now assume $(r,q)=(7,2)$, so $\O_{14}^{-}(2)$ (recall that $G \not\in \mathcal{A} \cup \mathcal{B} \cup \mathcal{C}$). As in the proof of Lemma~\ref{l:ClassicalFPRs}, let $s \in G$ be a Singer cycle (so $|s| = 2^7+1$) and note that $\M(G,s) = \{H_1,H_2\}$, where $H_1$ is of type ${\rm GU}_{7}(2)$ and $H_2$ is of type $O_2^{-}(2^7)$. In view of the  upper bound in \eqref{eq:nk}, we deduce that $F(x,s) < 2^{-4}$ and this immediately implies that $\what{Q}(G,s,7r+56) < 1$. 
 
Similar arguments apply when $(r,q)=(8,2)$. If $G={\rm U}_{9}(2)$, then $|G|<2^{80}$ and the upper bound on $F(x,s)$ in Lemma \ref{l:ClassicalFPRs} is sufficient. If $G = \O_{16}^{-}(2)$ then we choose $s$ as in the proof of Lemma \ref{l:ClassicalFPRs}, so the subgroups in $\mathcal{M}(G,s)$ are of type ${\rm GU}_{8}(2)$ and $O_{8}^{-}(4)$ (one subgroup of each type). From the bound in \eqref{eq:nk} we get $F(x,s)< 2^{-5}$ and the desired result quickly follows. Similarly, if $G = \O_{16}^{+}(2)$ then the upper bound in \eqref{eq:nk2} gives $F(x,s)<2^{-2}$ and this implies that $\what{Q}(G,s,7r+56) < 1$ as required.

Finally, let us assume $r<7$. First suppose $r \geqs R(G)$, in which case we choose $s \in G$ as in Lemma \ref{l:ClassicalFPRs}. If $G = {\rm L}_{7}(q)$, then $r=6$ and
\[
\what{Q}(G,s,7r+56) < q^{r^2+2r}\cdot\left(\frac{4}{q^{2}}\right)^{7r+56},
\]
which is less than $1$ if $q \geqs 3$ (note that ${\rm L}_{7}(2)$ is in the collection $\mathcal{C}$). Similarly, if $G \ne {\rm L}_{7}(q)$ then 
\[
\what{Q}(G,s,7r+56) < q^{2r^2+\e r}\cdot\left(\frac{3}{q^{2}}\right)^{7r+56}
\]
and it just remains to handle the groups $\O_{12}^{\pm}(2)$ (for example, if $(r,q)=(5,2)$ and $r \geqs R(G)$ then $G = {\rm Sp}_{10}(2)$ or $\O_{10}^{+}(2)$, both of which belong to $\mathcal{A} \cup \mathcal{B}$). For $G = \O_{12}^{-}(2)$ we take a Singer cycle $s \in G$, in which case \eqref{eq:nk} implies that $F(x,s)<3/16$ and we get $\what{Q}(G,s,7r+56) < 1$. Similarly, if $G = \O_{12}^{+}(2)$ and $s = 4^{-}\perp 8^{-}$ then the upper bound on $F(x,s)$ in \eqref{eq:nk2} is good enough to give $\what{Q}(G,s,7r+56) < 1$.

To complete the proof, we may assume $r<R(G)$. As noted in Remark~\ref{r:obs}, there exists $s \in G$ such that $\M(G,s)=\{H\}$ for some subgroup $H$, so $\gamma_u(G) \leqs b(G,G/H)$ by Corollary \ref{c:CriterionBase}. If $H$ is reducible, then $b(G,G/H) \leqs \dim V + 11$ by Proposition \ref{p:ben}, where $V$ is the natural module for $G$. Otherwise, $b(G,G/H) \leqs 5$ by the main theorem of \cite{B07}. In all cases, the result follows.
\end{proof}

This completes the proof of Theorem~\ref{t:ClassicalUDN}, and hence Theorem~4.

\begin{rem}\label{r:final}
It is easy to improve the bound in Proposition \ref{p:ClassicalLinearUpper} in special cases. For example, we have already shown that better bounds hold when $G$ is one of the groups covered by parts (i), (ii) or (iii) of Theorem \ref{t:ClassicalUDN}. In other cases, the proof of Proposition \ref{p:ClassicalLinearUpper} also yields better bounds. For instance, suppose $G = {\rm U}_{r+1}(q)$, with $r \geqs 7$ and $q \geqs 4$. Then $|G|<q^{r^2+2r}$ and one checks that
\[
\what{Q}(G,s,2r+40) < q^{r^2+2r}\cdot\left(\frac{3}{q^{r/2-3/2}}\right)^{2r+40} < 1
\]
for the element $s \in G$ given in Lemma \ref{l:ClassicalFPRs}. Therefore, $\gamma_u(G) \leqs 2r+40$.
\end{rem}


\end{document}